\providecommand{\U}[1]{\protect\rule{.1in}{.1in}}
\newtheorem{theorem}{Theorem}
\newtheorem{corollary}[theorem]{Corollary}
\newtheorem{definition}[theorem]{Definition}
\newtheorem{lemma}[theorem]{Lemma}
\newtheorem{proposition}[theorem]{Proposition}
\newtheorem{remark}[theorem]{Remark}
\newenvironment{proof}[1][Proof]{\noindent\textbf{#1.} }{\ \rule{0.5em}{0.5em}}
\renewcommand{\thefootnote}{\fnsymbol{footnote}}
\begin{document}

\author{Lucian Maticiuc$\;^{a,1}$, Tianyang Nie$\;^{b,\ast,2}$, \bigskip\\{\small $^{a}$~Faculty of Mathematics, \textquotedblleft Alexandru Ioan
Cuza\textquotedblright\ University \&}\\{\small Department of Mathematics, \textquotedblleft Gheorghe
Asachi\textquotedblright\ Technical University,}\\{\small Carol I Blvd., no. 11, Ia\c{s}i, 700506, Romania}$\bigskip$\\{\small $^{b}$~School of Mathematics, Shandong University, Jinan, Shandong
250100, China \&}\\{\small Laboratoire de Math\'{e}matiques, CNR-UMR 6205 Universit\'{e} de
Bretagne Occidentale,}\\{\small 29285 Brest C\'{e}dex 3, France \&}\\{\small ~School of Mathematics and Statistics, University of Sydney, Sydney, NSW 2006, Australia}}
\title{Fractional backward stochastic differential equations and fractional backward
variational inequalities}
\maketitle
\date{}

\begin{abstract}
In the framework of fractional stochastic calculus, we study the existence
and the uniqueness of the solution for a backward stochastic differential equation, formally written as:
\[
\left\{
\begin{array}
[c]{l}
-dY(t)= f(t,\eta(t),Y(t),Z(t))dt-Z(t)\delta B^{H}\left(
t\right)  ,\quad t\in[0,T],\\
Y(T)=\xi,
\end{array}
\right.
\]
where $\eta$ is a stochastic process given by $\eta(t)=\eta(0) +\int_{0}^{t}\sigma(s) \delta B^{H}(s)$, $t\in[0,T]$, and $B^{H}$ is a fractional
Brownian motion with Hurst parameter greater than $1/2$. The stochastic integral used in above equation is the divergence-type integral.
Based on Hu and Peng's paper, \textit{BDSEs driven by fBm}, SIAM J Control Optim. (2009), we develop a rigorous approach for this equation. Moreover, we study the existence of the solution for the multivalued backward stochastic differential equation
\[
\left\{
\begin{array}
[c]{l}
-dY(t)+\partial\varphi(Y(t))dt\ni f(t,\eta(t),Y(t),Z(t))dt-Z(t)\delta B^{H}\left(
t\right)  ,\quad t\in[0,T],\\
Y(T)=\xi,
\end{array}
\right.
\]
where $\partial\varphi$ is a multivalued operator of subdifferential type associated with the convex function $\varphi$.

\end{abstract}

\footnotetext[1]{{\scriptsize Corresponding author.}}

\renewcommand{\thefootnote}{\arabic{footnote}} \footnotetext[1]%
{{\scriptsize The work of this author was carried out at Faculty of
Mathematics, \textquotedblleft Alexandru Ioan Cuza\textquotedblright%
\ University of Ia\c{s}i under project POSDRU/89/1.5/S/49944.}}
\footnotetext[2]{{\scriptsize The work of this author was supported by the
Marie Curie ITN Project, \textquotedblleft Controlled
Systems\textquotedblright, no. 213841.}}
\renewcommand{\thefootnote}{\fnsymbol{footnote}}
\footnotetext{\textit{{\scriptsize E-mail addresses:}}
{\scriptsize lucian.maticiuc@ymail.com (Lucian\ Maticiuc), nietianyang@163.com
(Tianyang\ Nie)}}

\textbf{Mathematics Subject Classification (2010): }60H10, 60G22, 47J20, 60H05\medskip

\textbf{Keywords:} Backward stochastic differential equation $\cdot$ Frac\-tio\-nal Brownian motion $\cdot$ Divergence-type integral $\cdot$ Malliavin calculus $\cdot$ Backward stochastic variational inequality $\cdot$ Subdifferential operator.

\section{Introduction}
General backward stochastic differential equations (BSDEs) driven by a Brownian motion were first studied by Pardoux and Peng \cite{PP-92},
where they also gave a probabilistic interpretation for the viscosity solution of semilinear partial differential equations (PDEs).
Pardoux and R\u{a}\c{s}canu \cite{PR-98} studied backward stochastic differential equations involving a subdifferential
operator [which are often called backward stochastic variational inequalities (BSVIs)],
and they used them in order to generalize the Feymann$-$Kac type formula to represent the solution of multivalued parabolic PDEs
[also called parabolic variational inequalities (PVIs)].

Backward stochastic differential equations driven by a fractional Brownian motion  with Hurst parameter $H\in(1/2,1)$ were first considered by Biagini, Hu, {\O}ksendal and Sulem \cite{BHOS-02}, when they studied the stochastic maximal principle in the framework of a fractional Brownian motion. By adapting the four-step scheme introduced by Ma et al. \cite{MPY-94} and the so-called $S$-transform, Bender \cite{B-05} studied BSDEs driven by a fractional Brownian motion  with Hurst parameter $H\in(0,1)$. Indeed, through a backward parabolic PDE, he constructed an explicit solution of a kind of linear fractional BSDE. Hu and Peng \cite{HP-09} were the first to study nonlinear BSDEs governed by a fractional Brownian motion (fBm). Our work, based on \cite{HP-09}, has the objective to develop a rigorous approach for such BSDEs driven by a fBm and to extend the discussion to fractional BSVIs. Our paper is, to our best knowledge, the first one to study fractional BSVIs.

Let us recall that, for $H\in(0,1)$, a (one-dimensional) fBm
$(B^{H}\left(  t\right)  )_{t\geq0}$ with Hurst parameter $H$ is a continuous and
centered Gaussian process with covariance
\[
\mathbb{E}\left[  B^{H}(t)B^{H}(s)\right]=\frac{1}{2}(t^{2H}+s^{2H}-|t-s|^{2H}),\quad t,s\ge0.
\]
For $H=1/2$, the fBm is a standard Brownian motion. If $H>1/2$, then $B^{H}\left(  t\right)  $ has a long-range dependence,
which means that for $r(n):=\mathrm{cov}(B^{H}\left(  1\right)  ,B^{H}\left(  n+1\right)-B^{H}\left(  n\right)  )$,
we have $\sum_{n=1}^{\infty}r(n)=\infty$.
Moreover, $B^{H}$ is self-similar, i.e., $B^{H}\left(  at\right)  $ has the same law as $a^{H}B^{H}\left(  t\right)  $ for any $a>0$.
Since there are many models of physical phenomena and finance which exploit the self-similarity and
the long-range dependence, fBms are a very useful tool to characterize such type of problems.

However, since fBms are not semimartingales nor
Markov processes when $H\neq1/2$, we cannot use the classical theory of stochastic calculus to define the fractional stochastic integral.
In essence, two different integration theories with respect to fractional Brownian motion have been defined and studied.
The first one, originally due to Young \cite{Y-36}, concerns the pathwise (with $\omega$ as a parameter) Riemann$-$Stieltjes integral,
which exists if the integrand has H\"{o}lder continuous paths of order $\alpha>1-H$. But
it turns out that this integral has the properties comparable to the Stratonovich integral,
which leads to difficulties in applications.

The second one concerns the divergence operator (or Skorohod integral),
defined as the adjoint of the derivative operator in the framework of the Malliavin calculus.
This approach was introduced by Decreusefond and \"{U}st\"{u}nel \cite{DU-98},
and it was very intensely studied, e.g., in Al\`{o}s and Nualart \cite{AN-03} for $H>1/2$,
and in Al\`{o}s et al. \cite{AMN1-00} for $H<1/2$ as well as in Al\`{o}s et al. \cite{AMN-01} for Gaussian processes.

An equivalent approach consists in defining, for $H\in(1/2,1)$,
the stochastic integral based on Wick product (introduced by Duncan et al. \cite{DHP-00}), as the limit of Riemann sums.
We mention that in contrast to the pathwise integral, the expectation of this integral is zero
for a large class of integrands.

Concerning the study of BSDEs in the fractional framework, the major problem is
the absence of a martingale representation type theorem with respect to a fBm.
For the first time, Hu and Peng \cite{HP-09} overcome this problem, in the case $H>1/2$. For this, they used the notion of
quasi-conditional expectation $\hat{E}$ (introduced in Hu and {\O }ksendal
\cite{HO-03}).  In our paper, we consider the BSDE
\begin{equation}\label{BSDE in introduction}
\left\{
\begin{array}
[c]{l}%
-dY(t)= f(t,\eta(t),Y(t),Z(t))dt-Z(t)\delta B^{H}\left(
t\right)  ,\quad t\in[0,T],\\
Y(T)=g(\eta(T)),
\end{array}
\right.
\end{equation}
driven by a fBm $B^{H}$ and governed by the process $\eta(t)=\eta(0) +\int_{0}^{t}\sigma(s) \delta B^{H}(s)$, $t\in[0,T]$,
where $\sigma:[0,T]\rightarrow\mathbb{R}$ is deterministic, continuous function.

A special care has to be paid here to the stochastic integral in the BSDE (\ref{BSDE in introduction}). In \cite{HP-09} this stochastic integral is the Wick product one, but the It\^{o} formula and the integration by part formula they used were established for the It\^{o}$-$Skorohod type integral (see Definition 6.11  \cite{H-05} ).  In our approach, we use as stochastic integral the divergence operator.

Concerning the coefficient $\sigma$ of the driving process $\eta$, Hu and Peng \cite{HP-09} supposed that
\[
\text{ there exists } c_{0}>0\text{ such that }\inf_{t\in\left[  0,T\right]  }\dfrac
{\hat{\sigma}(t)}{\sigma(t)}\geq c_{0},
\]
for $\hat{\sigma}(t):={\int_{0}^{t}}\phi(t-r)\sigma(r)dr,~t\in[0,T]$.
Here, in our manuscript, we work without such a condition. Let us mention that in \cite{HP-09},
it is assumed that
$\eta(t)=\eta(0)+\int_{0}^{t}b(s)ds+\int_{0}^{t}\sigma(s)\delta B^{H}(s),~t\in[0,T]$.
In our paper, we adopt the form $\eta(t)=\eta(0)+\int_{0}^{t}\sigma(s)\delta B^{H}(s),~t\in[0,T]$, since we will use the proof of Theorem 3.8 from \cite{HP-09}, especially the quasi-conditional expectation formula
\[
\hat{E}[f(\eta(T))|\mathcal{F}_{t}]=P_{\|\sigma\|_{T}^{2}-\|\sigma\|_{t}^{2}}f(\eta(t)).
\]

Based on the above-described framework, we prove the existence and the uniqueness for BSDE (\ref{BSDE in introduction}). This approach includes, in particular, first a discussion of the equation
\[
Y(t)=g(\eta(T))+{\displaystyle\int_{t}^{T}}f\left(s,\eta(s)\right)ds-{\displaystyle\int_{t}^{T}}Z(s)\delta B^{H}(s),\qquad t\in[0,T].
\]
After, the existence for BSDE (\ref{BSDE in introduction}) is proved by using a fixed point theorem over an appropriate Banach space.

Based on our results on BSDE driven by a fBm and on Pardoux and R\u{a}\c{s}canu \cite{PR-98} on BSVI governed by a standard Brownian motion, we consider the following fractional BSVI
\[
\left\{
\begin{array}
[c]{l}%
-dY(t)+\partial\varphi(Y(t))dt\ni f(t,\eta(t),Y(t),Z(t))dt-Z(t)\delta B^{H}\left(
t\right)  ,\;\;t\in[0,T],\\
Y(T)=g(\eta(T)),
\end{array}
\right.
\]
where $\partial\varphi$ is the subdifferential of a convex lower semicontinuous (l.s.c.) function $\varphi:\mathbb{R}\rightarrow(-\infty,+\infty]$.
The existence of the solution will be proved.

Now, we give the outline of our paper: In Section 2 we recall some
definitions and results about fractional stochastic integrals and the related It\^{o} formula.
We present the assumptions and some auxiliary results including the It\^{o} formula w.r.t. the divergence-type integral in Section 3.
Section 4 is devoted to  prove the existence and the uniqueness result for BSDE driven by a fBm. In Section 5, we study the existence for fractional BSVI governed by a fBm. Finally, in the Appendix, we prove a more general It\^{o} formula based on Theorem 8 \cite{AN-03} and an auxiliary lemma.
\section{Preliminaries: Fractional stochastic calculus}

In this section, we shall recall some important definitions and results
concerning the Malliavin calculus, the stochastic integral with respect to a fBm, and
It\^{o}'s formula. For a deeper discussion, we refer the reader to \cite{AMN-01,AN-03,BHOZ-06,DHP-00,H-05} and \cite{N-95}.

Throughout our paper, we assume that the Hurst parameter $H$ always satisfies $H>1/2$.
Define
\[
\phi(x)=H(2H-1)|x|^{2H-2},\qquad x\in\mathbb{R}.
\]
Let us denote by $|\mathcal{H}|$ the Banach space of measurable functions $f:[0,T]\rightarrow\mathbb{R}$ such that
\[
\Vert f\Vert_{|\mathcal{H}|}^{2}:=\int_{0}^{T}\int_{0}^{T}\phi(u-v)|f(u)||f(v)|dudv<+\infty.
\]
Given $\xi,\eta\in|\mathcal{H}|$, we put
\begin{equation}\label{inner product}
\langle\xi,\eta\rangle_{T}=\int_{0}^{T}\int_{0}^{T}\phi(u-v)\xi\left(
u\right)  \eta\left(  v\right)  dudv\text{ \ and\ }\Vert\xi\Vert_{T}%
^{2}:=\langle\xi,\xi\rangle_{T}.
\end{equation}
Then $\langle\xi,\eta\rangle_{T}$ is a Hilbert scalar product.
Let $\mathcal{H}$ be the completion of the space of step functions in $|\mathcal{H}|$ under this scalar product.
We emphasize that the elements of $\mathcal{H}$ can be distributions.
Moreover, from \cite{MMV-01} we have the continuous embedding $L^{2}([0,T])\subset L^{\frac{1}{H}}([0,T])\subset|\mathcal{H}|\subset\mathcal{H}$.

Let $B^{H}$ be a fractional Brownian motion defined on a complete probability space $(\Omega,\mathcal{F},P)$, and that $\mathcal{F}$ is generated by $B^{H}$. Let $\mathcal{P}_{T}$ be the set of elementary random variables of the form
\[
F=f\left(  \int_{0}^{T}\xi_{1}(t)dB^{H}\left(
t\right)  ,\ldots,\int_{0}^{T}\xi_{n}(t)dB^{H}\left(  t\right)  \right)  ,
\]
where $f$ is a polynomial function of $n$ variables and $\xi_{1},\xi_{2},\ldots,\xi_{n}\in\mathcal{H}$.
The Malliavin derivative $D^{H}$ of an elementary variable $F\in\mathcal{P}_{T}$ is defined by
\[
D_{s}^{H}F=\sum_{i=1}^{n}\dfrac{\partial f}{\partial x_{i}}\left(\int_{0}^{T}\xi_{1}(t)dB^{H}(t),\ldots,\int_{0}^{T}\xi_{n}(t)dB^{H}(t)\right)  \xi_{i}(s),\;s\in\lbrack0,T].
\]
We denote by $\mathbb{D}_{1,2}$ the Banach space defined as the completion of $\mathcal{P}_{T}$ w.r.t. the following norm
\[
\Vert F\Vert_{1,2}=\left[  \mathbb{E}(\vert F\vert ^{2})+\mathbb{E}\left(  \Vert D_{s}^{H}F\Vert_{T}^{2}\right)\right]^{1/2},~ F\in \mathcal{P}_{T}.
\]
Hence, $\mathbb{D}_{1,2}$ consists of all $F\in L^{2}(\Omega,\mathcal{F},P)$ such that
there exists a sequence $F_{n}\in\mathcal{P}_{T}$, $n\ge 1$, which satisfies
\[
\begin{array}
[c]{l}
F_{n}\longrightarrow F\text{ in }L^{2}(\Omega,\mathcal{F},P),\medskip\\
\left(D^{H}F_{n}\right)_{n\ge1}\text{ is convergent in }L^{2}(\Omega,\mathcal{F},P;\mathcal{H}).
\end{array}
\]
Moreover, from Proposition 1.2.1 \cite{N-95} we have that $D^{H}=\left(D_{s}^{H}\right)_{s\in[0,T]}$ is a
closable operator from $L^{2}(\Omega,\mathcal{F},P)$ to $L^{2}(\Omega,\mathcal{F},P;\mathcal{H})$.
Thus, $D^{H}F=\lim\limits_{n\rightarrow\infty}D^{H}G_{n}$ in $L^{2}(\Omega,\mathcal{F},P;\mathcal{H})$,
for every sequence $G_{n}\in\mathcal{P}_{T},~n\ge1$,
which satisfies
\[
\begin{array}
[c]{l}
G_{n}\longrightarrow F\text{ in } L^{2}(\Omega,\mathcal{F},P),\medskip\\
\left(D^{H}G_{n}\right)_{n\ge1}\text{ is convergent in }L^{2}(\Omega,\mathcal{F},P;\mathcal{H}).
\end{array}
\]
Let us introduce also another derivative
\begin{equation}\label{another derivative}
\mathbb{D}_{t}^{H}F=\int_{0}^{T}\phi(t-v)D_{v}^{H}Fdv,\quad t\in[0,T].
\end{equation}

We also need the adjoint operator of the derivative $D^{H}$. This operator
is called divergence operator, and it is denoted by $\delta(\cdot)$ and represents the
divergence-type integral with respect to a fBm (see, e.g., \cite{AMN-01},  \cite{AN-03} and \cite{BHOZ-06} for
more details).

\begin{definition}\label{divergence operator}
We say that a process $u\in L^{2}(\Omega,\mathcal{F},P;\mathcal{H})$
belongs to the domain $Dom(\delta)$, if there exists $\delta(u)\in L^{2}(\Omega,\mathcal{F},P)$, such that the following duality relationship is satisfied
\begin{equation}\label{equation for divergence operator}
\mathbb{E}(F\delta(u))=\mathbb{E}(\langle D_{\cdot}^{H}F,u\rangle_{T}%
),\quad\text{ for all } F\in\mathcal{P}_{T}.
\end{equation}
\end{definition}
\begin{remark}
In (\ref{equation for divergence operator}), the class $\mathcal{P}_{T}$ can be replaced by $\mathbb{D}_{1,2}$ (see \cite{BHOZ-06} Definition 2.2.2 and 2.2.3). If $u\in Dom(\delta)$, $\delta(u)$ is unique, and we define the divergence-type integral of $u\in Dom(\delta)$ w.r.t. fBm $B^{H}$ by putting $\int_{0}^{T}u(s)\delta B^{H}(s):=\delta(u)$.
\end{remark}

Let us recall a result about a sufficient condition for the existence of the divergence-type integral.
For this, we use the It\^{o}$-$Skorohod type stochastic integral introduced in Definition 6.11 \cite{H-05}, which is defined in the spirit of the anticipative Skorohod integral w.r.t. Brownian motion in \cite{NP-88}.

\begin{theorem}\label{Th.1_Hu 2005}
[Proposition 6.25, \cite{H-05}] We denote by $\mathbb{L}^{1,2}_{H}$ the space of  all stochastic processes $u:\left(  \Omega,\mathcal{F},P\right)\rightarrow \mathcal{H}$ such that
\begin{equation}\label{isometry equation}
\mathbb{E}\left(  \Vert u\Vert_{T}^{2}+\int_{0}^{T}\int_{0}^{T}|\mathbb{D}
_{s}^{H}u\left(  t\right)  |^{2}dsdt\right)  <\infty.
\end{equation}
If $u\in \mathbb{L}^{1,2}_{H}$,  then the It\^{o}$-$Skorohod type stochastic integral $\int_{0}^{T}u\left(  s\right)
dB^{H}\left(  s\right)  $ defined by Definition
6.11 \cite{H-05} exists and coincides with the divergence-type integral (see Theorem 6.23 \cite{H-05}). Moreover,
\[
\left\{
\begin{array}
[c]{l}
\mathbb{E}\left[  {\displaystyle\int_{0}^{T}}u\left(  s\right)  dB^{H}\left(
s\right)  \right]  =0,\medskip\\
\mathbb{E}\left[  {\displaystyle\int_{0}^{T}}u\left(  s\right)  dB^{H}\left(
s\right)  \right]  ^{2}=\mathbb{E}\left(  \Vert u\Vert_{T}^{2}
+{\displaystyle\int_{0}^{T}}{\displaystyle\int_{0}^{T}}\mathbb{D}_{s}
^{H}u(t)\mathbb{D}_{t}^{H}u(s)dsdt\right)  .
\end{array}
\right.
\]
\end{theorem}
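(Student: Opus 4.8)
The plan is to establish all the assertions first for simple (step) processes and then to pass to general $u\in\mathbb{L}^{1,2}_H$ by a density and continuity argument, the crucial observation being that the claimed second-moment formula is precisely the squared norm under which $\delta$ acts isometrically. I would start from step processes $u(t)=\sum_{i=1}^{n}F_i\,\mathbf{1}_{[t_i,t_{i+1})}(t)$ with $F_i\in\mathbb{D}_{1,2}$. For such $u$ one checks by a direct computation that the Riemann-sum (Wick product) construction of \cite{H-05} yields
\[
\delta(u)=\sum_{i=1}^n F_i\,\bigl(B^H(t_{i+1})-B^H(t_i)\bigr)-\sum_{i=1}^n\langle D_\cdot^H F_i,\mathbf{1}_{[t_i,t_{i+1})}\rangle_T,
\]
and that this random variable satisfies the duality relation (\ref{equation for divergence operator}); this simultaneously shows that on the dense class of step processes the It\^{o}--Skorohod integral coincides with the divergence $\delta(u)$.

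The zero-mean property is then immediate: taking $F=1$ in (\ref{equation for divergence operator}) and using $D^H 1=0$ gives $\mathbb{E}[\delta(u)]=\mathbb{E}\langle D_\cdot^H 1,u\rangle_T=0$. For the second-moment identity the key tool is the commutation relation between derivative and divergence,
\[
D_s^H\delta(u)=u(s)+\delta\bigl(D_s^H u\bigr),
\]
which I would verify first on step processes. Applying the duality (\ref{equation for divergence operator}) with the test variable $G=\delta(u)\in\mathbb{D}_{1,2}$ and then inserting the commutation relation gives
\[
\mathbb{E}[\delta(u)^2]=\mathbb{E}\langle D_\cdot^H\delta(u),u\rangle_T=\mathbb{E}\|u\|_T^2+\mathbb{E}\langle \delta(D_\cdot^H u),u\rangle_T.
\]
Writing out the last inner product with its kernel $\phi$, applying duality once more to remove the remaining $\delta$, and absorbing one factor of $\phi$ into the derivative through the definition (\ref{another derivative}) of $\mathbb{D}^H$, one recognizes exactly the trace term $\mathbb{E}\int_0^T\int_0^T\mathbb{D}_s^H u(t)\,\mathbb{D}_t^H u(s)\,ds\,dt$, which establishes the announced formula on the dense class.

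It remains to extend everything to $u\in\mathbb{L}^{1,2}_H$. The trace term is dominated by the quantity in (\ref{isometry equation}): by the Cauchy--Schwarz inequality together with the symmetry obtained upon interchanging the names of the integration variables,
\[
\int_0^T\!\!\int_0^T\mathbb{D}_s^H u(t)\,\mathbb{D}_t^H u(s)\,ds\,dt\le\int_0^T\!\!\int_0^T|\mathbb{D}_s^H u(t)|^2\,ds\,dt,
\]
so the right-hand side of the isometry is controlled by the $\mathbb{L}^{1,2}_H$-norm. Hence $\delta$ is bounded from the dense set of step processes into $L^2(\Omega,\mathcal{F},P)$ for that norm and extends continuously to all of $\mathbb{L}^{1,2}_H$; the isometry, the vanishing of the mean and the identification of the two integrals all pass to the limit. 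The main obstacle is that both delicate points are caused by the singular kernel $\phi$: the commutation relation $D^H\delta=\mathrm{Id}+\delta D^H$ must be justified in the fractional pairing $\langle\cdot,\cdot\rangle_T$ rather than in an ordinary $L^2$ inner product, and one must check at each step that $\delta(D_s^H u)$ is again an admissible element of $Dom(\delta)$ so that the iterated duality is legitimate and the trace double integral is absolutely convergent; here the embeddings $L^{1/H}([0,T])\subset|\mathcal{H}|\subset\mathcal{H}$ recorded above, which turn the a priori distribution-valued objects into genuine functions, are what keep all the integrals finite.
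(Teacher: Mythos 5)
There is nothing in the paper to compare your argument against: Theorem \ref{Th.1_Hu 2005} is stated as a quotation of Proposition 6.11, Theorem 6.23 and Proposition 6.25 of \cite{H-05}, and the authors give no proof, relying entirely on Hu's memoir. Your sketch is therefore a reconstruction from scratch, and it follows the standard route (elementary processes, duality, the commutation relation $D^{H}\delta=\mathrm{Id}+\delta D^{H}$, then density), which is indeed how this result is proved in the literature. The computation of the trace term is correct: two applications of the duality relation each produce one kernel $\phi$, and absorbing each into a derivative via (\ref{another derivative}) yields exactly $\mathbb{E}\int_{0}^{T}\int_{0}^{T}\mathbb{D}_{s}^{H}u(t)\,\mathbb{D}_{t}^{H}u(s)\,ds\,dt$ (note it is two absorptions, one per duality step, not one). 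Your Cauchy--Schwarz domination of the trace term by $\int_{0}^{T}\int_{0}^{T}|\mathbb{D}_{s}^{H}u(t)|^{2}\,ds\,dt$ is also correct and is what makes the extension argument work. Two points remain genuinely unproved in your sketch and deserve to be named: (i) the density of elementary processes with $\mathbb{D}_{1,2}$ coefficients in $\mathbb{L}^{1,2}_{H}$ \emph{for the norm} (\ref{isometry equation}), which involves the smoothed derivative $\mathbb{D}^{H}$ rather than $D^{H}$ and is not the standard $\mathbb{D}^{1,2}(|\mathcal{H}|)$ density statement; and (ii) the legitimacy of applying the duality relation with the random test variable $\delta(u)$ and, inside the second duality step, the membership $D_{r}^{H}u\in Dom(\delta)$ for a.e.\ $r$ together with the Fubini interchange over the singular kernel. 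You flag (ii) yourself as the main obstacle, which is the right instinct; both points are settled in \cite{H-05}, so as a proof outline your proposal is sound, but as written it is a sketch rather than a complete proof.
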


Let us finish this section by giving an It\^{o} formula for the divergence-type integral. Due to Theorem 8 \cite{AN-03},
the following It\^{o} formula holds.
\begin{theorem}\label{Ito formula for the divergence integral}
Let $\psi$ be a function of class $C^{2}(\mathbb{R})$. Assume that the process $(u_{t})_{t\in[0,T]}$ belongs to $\mathbb{D}^{2,2}_{loc}(|\mathcal{H}|)$ and that the integral $X_{t}=\int_{0}^{t}u_{s}\delta B^{H}(s)$ is almost surely continuous. Assume that $\left(\mathbb{E}|u|^{2}\right)^{1/2}$ belong to $\mathcal{H}$. Then, for each $t\in[0,T]$, the following formula holds
\[
\begin{array}
[c]{l}
\psi(X_{t})=\psi(0)+{\displaystyle\int_{0}^{t}}{\displaystyle\dfrac{\partial }{\partial x}}\psi(X_{s})u_{s}\delta B^{H}(s)\medskip\\
\qquad\qquad+H(2H-1){\displaystyle\int_{0}^{t}}{\displaystyle\dfrac{\partial^{2}}{\partial x^{2}}}\psi(X_{s})u_{s}{\displaystyle\int_{0}^{T}}
|s-r|^{2H-2}\left({\displaystyle\int_{0}^{s}}D_{r}u_{\theta}\delta B^{H}(\theta)dr\right)ds\medskip\\
\qquad\qquad+H(2H-1){\displaystyle\int_{0}^{t}}{\displaystyle\dfrac{\partial^{2}}{\partial x^{2}}}\psi(X_{s})u_{s}\left({\displaystyle\int_{0}^{s}}
u_{\theta}|s-\theta|^{2H-2}d\theta \right)ds.
\end{array}
\]
\end{theorem}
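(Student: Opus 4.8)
The plan is to deduce the formula directly from the Itô formula of Al\`{o}s and Nualart (Theorem 8, \cite{AN-03}), which under the present hypotheses reads, for $\psi\in C^{2}(\mathbb{R})$ and $X_{s}=\int_{0}^{s}u_{\theta}\delta B^{H}(\theta)$,
\[
\psi(X_{t})=\psi(0)+\int_{0}^{t}\psi'(X_{s})u_{s}\,\delta B^{H}(s)+\int_{0}^{t}\psi''(X_{s})\,u_{s}\,\mathbb{D}_{s}^{H}X_{s}\,ds,
\]
where $\mathbb{D}_{s}^{H}X_{s}=\int_{0}^{T}\phi(s-r)D_{r}^{H}X_{s}\,dr$ is the operator introduced in (\ref{another derivative}). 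Thus the whole argument reduces to expanding the correction term $\psi''(X_{s})u_{s}\mathbb{D}_{s}^{H}X_{s}$ and recognizing the two pieces displayed in the statement.

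First I would compute the Malliavin derivative of the divergence integral $X_{s}$. By the standard commutation rule for the derivative of a divergence (Skorohod) integral (see \cite{N-95}), writing $X_{s}=\delta(u\mathbf{1}_{[0,s]})$, one has for $r\in[0,T]$
\[
D_{r}^{H}X_{s}=u_{r}\,\mathbf{1}_{[0,s]}(r)+\int_{0}^{s}D_{r}^{H}u_{\theta}\,\delta B^{H}(\theta),
\]
which is legitimate precisely because $u\in\mathbb{D}^{2,2}_{loc}(|\mathcal{H}|)$, so that $u$ is twice Malliavin differentiable and the inner process $\theta\mapsto D_{r}^{H}u_{\theta}$ is itself divergence-integrable. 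Substituting this into $\mathbb{D}_{s}^{H}X_{s}=H(2H-1)\int_{0}^{T}|s-r|^{2H-2}D_{r}^{H}X_{s}\,dr$ and splitting along the two summands gives
\[
\mathbb{D}_{s}^{H}X_{s}=H(2H-1)\int_{0}^{s}|s-r|^{2H-2}u_{r}\,dr+H(2H-1)\int_{0}^{T}|s-r|^{2H-2}\Big(\int_{0}^{s}D_{r}^{H}u_{\theta}\,\delta B^{H}(\theta)\Big)\,dr,
\]
the indicator collapsing the first $r$-integral to $[0,s]$. Multiplying by $\psi''(X_{s})u_{s}$, integrating in $s$ over $[0,t]$, and renaming $r\mapsto\theta$ in the first term yields exactly the two correction terms of the statement.

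The hard part will be the measure-theoretic bookkeeping that makes these manipulations rigorous. Three points need care: (i) checking that the hypotheses here --- $u\in\mathbb{D}^{2,2}_{loc}(|\mathcal{H}|)$, almost-sure continuity of $X$, and $(\mathbb{E}|u|^{2})^{1/2}\in\mathcal{H}$ --- are exactly those under which Theorem 8 \cite{AN-03} applies, so that the base formula may be invoked; (ii) justifying the Fubini-type interchange that moves the singular deterministic kernel $|s-r|^{2H-2}$ past the divergence integral $\int_{0}^{s}D_{r}^{H}u_{\theta}\,\delta B^{H}(\theta)$, which requires controlling the joint integrability in $(r,s)$ against the kernel near the diagonal; and (iii) handling the localization built into the $loc$ class, by stopping along a localizing sequence $(\Omega_{n})_{n}$ on which $u$ and its derivatives lie in the global spaces, applying the formula there, and passing to the limit as $\Omega_{n}\uparrow\Omega$ using the a.s. continuity of $X$. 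Once these verifications are in place, the algebraic identity above completes the proof.
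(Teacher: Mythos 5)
The paper offers no proof of this statement at all: it is quoted verbatim from Theorem 8 of \cite{AN-03} (the preceding sentence in the text reads ``Due to Theorem 8 \cite{AN-03}, the following It\^{o} formula holds''), and Theorem 8 there is stated exactly in the expanded two-term form displayed here, so your argument ultimately rests on the same citation and is in that sense the same approach. Be aware, though, that the ``compact'' formula with $\psi''(X_{s})u_{s}\mathbb{D}_{s}^{H}X_{s}$ that you attribute to Theorem 8 is not how that theorem is stated; it is just the displayed conclusion rewritten via the (correct) commutation rule $D_{r}^{H}X_{s}=u_{r}\mathbf{1}_{[0,s]}(r)+\int_{0}^{s}D_{r}^{H}u_{\theta}\,\delta B^{H}(\theta)$, so your derivation is a harmless round trip rather than an independent proof; the genuine proof (a Riemann-sum/Taylor argument) is the one reproduced in generalized form in the paper's Appendix, Theorem \ref{general Ito formula for the divergence integral}.
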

\begin{remark}
The above theorem can be generalized, see Theorem \ref{general Ito formula for the divergence integral} in the Appendix. In particular, Corollary
\ref{particular for general ito formula} tells us:

Let $f:[0,T]\rightarrow\mathbb{R}$ and $g:[0,T]\rightarrow\mathbb{R}$ be deterministic continuous  functions.
If
\[
X_{t}=X_{0}+\int_{0}^{t}g_{s}ds+\int_{0}^{t}f_{s}\delta B^{H}(s),\;t\in[0,T],
\]
and $\psi\in C^{1,2}([0,T]\times\mathbb{R})$, we have
\begin{equation}\label{deterministic Ito's formula}
\begin{array}
[c]{l}
\psi(t,X_{t})=\psi(0,X_{0})+{\displaystyle\int_{0}^{t}}\dfrac{\partial}{\partial s} \psi(s,X_{s})ds
+{\displaystyle\int_{0}^{t}}\dfrac{\partial }{\partial x}\psi(s,X_{s})dX_{s}\medskip\\
\qquad\qquad\qquad
+\dfrac{1}{2}{\displaystyle\int_{0}^{t}}\dfrac{\partial^{2}}{\partial x^{2}}\psi(s,X_{s})~\left( \dfrac{d}{ds} \Vert f\Vert_{s}^{2}\right)ds,\;t\in[0,T].
\end{array}
\end{equation}
\end{remark}

\section{Assumptions and auxiliary results}
\subsection{Assumptions}
Let us consider the It\^{o}-type process
\begin{equation}\label{definition of eta}
\eta(t)=\eta(0)+\int_{0}^{t}\sigma(s)\delta B^{H}\left(
s\right)  ,~t\in[0,T],
\end{equation}
where the coefficients $\eta(0)$ and $\sigma$ satisfy:
\begin{itemize}
\item[$(H_{1})$] $\eta(0)\in\mathbb{R}$ is a given constant;
\item[$(H_{2})$] $\sigma:\mathbb{R}\rightarrow\mathbb{R}$ is a deterministic continuous function such that $\sigma(t)\neq0$, for all $t\in [0,T]$.
\end{itemize}
Let
\begin{equation} \label{sigma tilde_hat}
\hat{\sigma}(t):={\int_{0}^{t}}\phi(t-r)\sigma(r)dr,~t\in[0,T].
\end{equation}
We recall that (see (\ref{inner product}))
\[
\Vert\sigma\Vert_{t}^{2}=H(2H-1)\int_{0}^{t}\int_{0}^{t}|u-v|^{2H-2}%
\sigma\left(  u\right)  \sigma\left(  v\right)  dudv.
\]

\begin{remark}\label{remark for sigma}
The function $\hat{\sigma}$ defined by (\ref{sigma tilde_hat}) can be written in the following form:
\[
\hat{\sigma}(t)=H(2H-1)t^{2H-1}
{\displaystyle\int_{0}^{1}}
\left(  1-u\right)  ^{2H-2}\sigma\left(  tu\right)  du, ~t\in[0,T].
\]
Moreover, we observe that $\Vert\sigma\Vert_{t}^{2}$ is continuously differentiable with respect to $t$, and
\begin{equation}\label{properties for sigma}
\begin{array}
[c]{ll}
(a)  & \dfrac{d}{dt}\left(  \Vert\sigma\Vert_{t}^{2}\right)
=2\sigma\left(  t\right)  \hat{\sigma}(t)>0,\; t\in
(0,T],\medskip\\
(b)  & \text{ for a suitable constant } M>0, \dfrac{1}{M}t^{2H-1}%
\leq\dfrac{\hat{\sigma}(t)}{\sigma(t)}\leq Mt^{2H-1},\; t\in\left[
0,T\right]  .
\end{array}
\end{equation}
\end{remark}
\smallskip

Our objective is to study the following BSDE driven by the fBm $B^{H}$ and the above introduced stochastic process $\eta$:
\[
\left\{
\begin{array}
[c]{l}%
-dY(t)= f(t,\eta(t),Y(t),Z(t))dt-Z(t)\delta B^{H}\left(
t\right),~t\in[0,T],\medskip\\
Y(T)=\xi.
\end{array}
\right.
\]
Here, the stochastic integral is understood as the divergence operator.
We make the following assumptions on the function $f$ and the terminal condition $\xi$:
\begin{itemize}
\item[$(H_{3})$] The function $f:[0,T]\times\mathbb{R}^{3}\longrightarrow\mathbb{R}$ belongs to the space $C_{pol}^{0,1}\left(
[0,T]\times\mathbb{R}^{3}\right)^{\ast} $, and there
exists a constant $L$ such that, for all $t\in[0,T]$, $x,y_{1},y_{2}
,z_{1},z_{2}\in\mathbb{R}$,
\[
\begin{array}
[c]{rl}
\left\vert f(t,x,y_{1},z_{1})-f(t,x,y_{2},z_{2})\right\vert \leq
L(\left\vert y_{1}-y_{2}\right\vert
+\left\vert z_{1}-z_{2}\right\vert ).
\end{array}
\]
\item[$\left(  H_{4}\right)  $] $\xi=g(\eta_{T})$, where $g:\mathbb{R\rightarrow\mathbb{R}}$ is a differentiable function with polynomial growth.
\end{itemize}
\footnotetext[1]{{\scriptsize $C^{k,l}_{pol}([0,T]\times\mathbb{R}^{m})$} is the space of all $C^{k,l}$-functions over $[0,T]\times\mathbb{R}^{m}$, which together with their derivatives, are of polynomial growth.}
\smallskip

Before giving the definition of the solution for the above BSDE and investigating its wellposedness (see Section 4),
we introduce, motivated by \cite{HP-09}, the following space
\[
\mathcal{V}_{T}:=\left\{  Y=\phi(\cdot,\eta(\cdot)):\phi\in C_{pol}^{1,3}(
[0,T]\times\mathbb{R}) \text{ with } \dfrac{\partial\phi}{\partial t}\in C_{pol}^{0,1}(
[0,T]\times\mathbb{R})\right\}
\]
as well as its completion $\mathcal{\bar{V}}_{T}^{\alpha}$ under the following $\alpha$-norm
\begin{equation}\label{def V^alpha}
\Vert Y\Vert_{\alpha}=\left(  \int_{0}^{T}t^{2\alpha-1}\mathbb{E}|Y\left(
t\right)  |^{2}dt\right)  ^{1/2}=\left(  \int_{0}^{T}t^{2\alpha-1}%
\mathbb{E}|\phi(t,\eta\left(  t\right)  )|^{2}dt\right)  ^{1/2},
\end{equation}
where $\alpha\geq1/2$. Let us study some auxiliary results concerning these spaces.

\subsection{An It\^{o} formula}
We begin with the following result concerning the space $\mathcal{V}_{T}$.
\begin{lemma}\label{property of VT}
We have $\mathcal{V}_{T}\subset\mathbb{L}^{1,2}_{H}\subset Dom(\delta)$.
\end{lemma}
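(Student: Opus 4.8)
The plan is to treat the two inclusions separately, since they are of very different natures. The inclusion $\mathbb{L}^{1,2}_{H}\subset Dom(\delta)$ is immediate: it is precisely the content of Theorem \ref{Th.1_Hu 2005} (Proposition 6.25 of \cite{H-05}), which asserts that every $u\in\mathbb{L}^{1,2}_{H}$ has a well-defined It\^{o}--Skorohod integral that coincides with the divergence integral, so in particular $u\in Dom(\delta)$. Thus the whole difficulty lies in proving $\mathcal{V}_{T}\subset\mathbb{L}^{1,2}_{H}$, that is, that every $Y=\phi(\cdot,\eta(\cdot))$ with $\phi\in C^{1,3}_{pol}([0,T]\times\mathbb{R})$ satisfies the integrability condition (\ref{isometry equation}).

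First I would compute the Malliavin derivative of $Y$. Since $\sigma$ is deterministic and continuous, $\sigma\mathbf{1}_{[0,t]}\in\mathcal{H}$ and $\eta(t)=\eta(0)+\int_{0}^{t}\sigma(s)\delta B^{H}(s)$ is an element of the first chaos, hence a Gaussian random variable with $D^{H}_{r}\eta(t)=\sigma(r)\mathbf{1}_{[0,t]}(r)$ and finite moments of every order (its variance is $\|\sigma\|_{t}^{2}$, which is bounded on $[0,T]$). Applying the chain rule for the Malliavin derivative (Proposition 1.2.3 of \cite{N-95}, in its extension to functions with polynomially growing derivatives acting on random variables with all moments finite) to $\phi(t,\cdot)\in C^{3}$ gives $D^{H}_{r}Y(t)=\frac{\partial\phi}{\partial x}(t,\eta(t))\,\sigma(r)\mathbf{1}_{[0,t]}(r)$, and therefore, by the definition (\ref{another derivative}) of $\mathbb{D}^{H}$, $\mathbb{D}^{H}_{s}Y(t)=\frac{\partial\phi}{\partial x}(t,\eta(t))\int_{0}^{t}\phi(s-v)\sigma(v)\,dv$.

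It then remains to bound the two terms appearing in (\ref{isometry equation}). For the first, by Fubini $\mathbb{E}\|Y\|_{T}^{2}=\int_{0}^{T}\int_{0}^{T}\phi(t-t')\,\mathbb{E}[Y(t)Y(t')]\,dt\,dt'$; since $\phi$ has polynomial growth and each $\eta(t)$ is Gaussian with uniformly bounded variance, $\sup_{t}\mathbb{E}|Y(t)|^{2}<\infty$, and as the kernel $\phi(x)=H(2H-1)|x|^{2H-2}$ is integrable on $[-T,T]$ (because $2H-2>-1$ for $H>1/2$), the double integral $\int_{0}^{T}\int_{0}^{T}\phi(t-t')\,dt\,dt'$ is finite, whence $\mathbb{E}\|Y\|_{T}^{2}<\infty$. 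For the second term, the continuity (hence boundedness) of $\sigma$ on $[0,T]$ together with the integrability of $\phi$ yields a constant $K$, uniform in $s,t\in[0,T]$, with $\bigl|\int_{0}^{t}\phi(s-v)\sigma(v)\,dv\bigr|\le K$; consequently $\mathbb{E}\int_{0}^{T}\int_{0}^{T}|\mathbb{D}^{H}_{s}Y(t)|^{2}\,ds\,dt\le K^{2}T\int_{0}^{T}\mathbb{E}\bigl|\frac{\partial\phi}{\partial x}(t,\eta(t))\bigr|^{2}\,dt$, which is finite because $\frac{\partial\phi}{\partial x}$ has polynomial growth and $\sup_{t}\mathbb{E}|\eta(t)|^{2m}<\infty$ for every $m$. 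Both terms being finite, $Y\in\mathbb{L}^{1,2}_{H}$.

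The step I expect to be the main obstacle is the rigorous justification of the chain rule and of the membership $Y(t)\in\mathbb{D}_{1,2}$: the function $\phi$ is only assumed to grow polynomially, so the textbook chain rule (which requires bounded derivatives) does not apply verbatim and must be recovered through a truncation/approximation argument that exploits the Gaussian tails of $\eta(t)$ and its resulting finiteness of all moments. This is exactly why the space $\mathcal{V}_{T}$ is defined with $\phi\in C^{1,3}_{pol}$ rather than merely $C^{1,2}$, as flagged in the remark above: the additional smoothness and the polynomial-growth control on the derivatives are precisely what legitimize both the approximation procedure and the $L^{2}$ estimates. Once the derivative formulas are secured, the remaining bounds are routine.
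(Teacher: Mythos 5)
Your proposal is correct and follows essentially the same route as the paper: compute $\mathbb{D}^{H}_{s}u(t,\eta(t))=\frac{\partial\phi}{\partial x}(t,\eta(t))\int_{0}^{t}\phi(s-v)\sigma(v)dv$ explicitly, then bound both terms of (\ref{isometry equation}) using the polynomial growth of $\phi$ and its derivatives together with the moment estimates (\ref{Lp estimates}) for $\eta$, the second inclusion being immediate from Theorem \ref{Th.1_Hu 2005}. The only cosmetic difference is that you estimate $\mathbb{E}\Vert u\Vert_{T}^{2}$ directly via the integrability of the kernel $\phi$ on $[-T,T]$, whereas the paper invokes the continuous embedding $L^{2}([0,T])\subset|\mathcal{H}|$ and reduces to $\mathbb{E}\int_{0}^{T}|u(s,\eta(s))|^{2}ds<\infty$; your explicit attention to justifying the chain rule for polynomially growing derivatives is a point the paper passes over as a ``straight-forward computation.''
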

\begin{proof}
Let $u\in \mathcal{V}_{T}$. In order to show (\ref{isometry equation}), we first prove that $E\|u\|^{2}_{T}<\infty$. From  $L^{2}([0,T])\subset\mathcal{H}$ we see that it is sufficient to show that $\mathbb{E}{\displaystyle\int_{0}^{T}}|u(s,\eta(s))|^{2}ds<\infty$, where the latter property can be deduced from $\mathbb{E}|u(s,\eta(s))|^{2}\leq C$, $s\in[0,T]$, for some suitable $C\in \mathbb{R}$. Indeed, since $u\in \mathcal{V}_{T}$, we have, for some $C>0,~k\ge1$,
\[
\mathbb{E}|u(s,\eta(s))|^{2}\leq C\mathbb{E}(1+|\eta(s)|+\cdots+|\eta(s)|^{k}), ~s\in[0,T].
\]
On the other hand, from (\ref{definition of eta}) and Theorem 7.10 \cite{H-05}, we see that for any $p\ge1$, there exists $C_{p}>0$ such that
\begin{equation}\label{Lp estimates}
\mathbb{E}|\eta(s)|^{p}\leq C\mathbb{E}\left(1+\left|{\displaystyle\int_{0}^{s}}\sigma(v)\delta B^{H}(v)\right|^{p}\right)
\leq C_{p}+C_{p}\|\sigma\|_{s}^{p/2}.
\end{equation}
Hence, $E\|u\|^{2}_{T}<\infty$.

In a second step, we show that $\mathbb{D}^{H}_{s}u(t,\eta(t))$ exists for all $s,t\in[0,T]$ and
\begin{equation}\label{boundedness of DH operator}
\mathbb{E}{\displaystyle\int_{0}^{T}}{\displaystyle\int_{0}^{T}}|\mathbb{D}_{s}^{H}u(t,\eta(t))|^{2}dsdt<\infty.
\end{equation}
In fact, a straightforward computation shows that
\[
\mathbb{D}^{H}_{s}u(t,\eta(t))=\dfrac{\partial}{\partial x}u(t,\eta(t)){\displaystyle\int_{0}^{t}}\phi(s-v)\sigma(v)dv.
\]
From the polynomial growth of $\dfrac{\partial u}{\partial x}$ and the continuity of $\sigma$, we conclude
\[
\mathbb{E}|\mathbb{D}^{H}_{s}u(t,\eta(t))|^{2}\leq C,\quad s,t\in[0,T], \text{ for a suitable } C\in\mathbb{R},
\]
which yields (\ref{boundedness of DH operator}).
Consequently, the process $u$ satisfies (\ref{isometry equation}) and belongs to $\mathbb{L}^{1,2}_{H}$.\hfill
\end{proof}
\smallskip

Let us give now a statement for the It\^{o} formula in the framework of the divergence-type integral, which is for our purposes better adapted than Theorem 4.5 \cite{DHP-00}. We mention that the formula in the following theorem is a particular case of our generalized It\^{o} formula (\ref{1 general Ito's formula for the divergence integral}) (see Theorem \ref{general Ito formula for the divergence integral} in the Appendix), but here, we use a different approach.
\begin{theorem}\label{Ito formula for divergence type integral}
Let $u\in \mathcal{V}_{T}$ and $f\in C_{pol}^{0,1}([0,T]\times\mathbb{R})$. We put $f_{s}=f(s,\eta(s)), ~s\in[0,T]$. Then for the It\^{o} process
\[
X_{t}=X_{0}  +\int_{0}^{t}f_{s}ds+\int_{0}^{t}u_{s}\delta B^{H}(s),~t\in[0,T],
\]
we have
\[
\begin{array}
[c]{l}
(i)\quad uXI_{[0,t]}\in Dom(\delta),~t\in[0,T],\medskip\\
(ii)\quad X_{s}\in \mathbb{D}_{1,2},~s\in[0,T],\medskip\\
(iii)\quad \left(\mathbb{D}^{H}_{s}X_{s}\right)_{s\in[0,T]}\in L^{2}([0,T]\times\Omega)
\end{array}
\]
and
\[
\begin{array}
[c]{l}
X_{t}^{2}=X_{0}^{2} +2{\displaystyle\int_{0}^{t}}X_{s}f_{s}ds+2{\displaystyle\int_{0}^{t}}X_{s}u_{s}\delta B^{H}(s)+2{\displaystyle\int_{0}^{t}}u_{s}\mathbb{D}^{H}_{s}X_{s}ds  , ~a.s. \qquad t\in[0,T].
\end{array}
\]
\end{theorem}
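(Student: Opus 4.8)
The plan is to derive the identity from a telescoping Riemann-sum argument in which the correction term $2\int_0^t u_s\,\mathbb D_s^H X_s\,ds$ arises from the product rule for the divergence operator rather than from a quadratic variation, the latter vanishing when $H>1/2$. Throughout write $\tilde X_t=\int_0^t u_s\,\delta B^H(s)$, so that $X_t=X_0+\int_0^t f_s\,ds+\tilde X_t$, and recall that $u=u(\cdot,\eta(\cdot))$ and $f=f(\cdot,\eta(\cdot))$ are smooth polynomial-growth functionals of $\eta$.

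First I would settle (i)--(iii), which are also what gives meaning to the right-hand side. Using the chain rule, the relation $D_v^H\eta(\theta)=\sigma(v)I_{[0,\theta]}(v)$ (exactly as in the proof of Lemma \ref{property of VT}) and the commutation rule $D_v^H\delta(w)=w_v+\delta(D_v^H w)$ for the divergence operator, I would compute
\[
D_v^H X_s=\int_0^s D_v^H f_\theta\,d\theta+u_vI_{[0,s]}(v)+\int_0^s D_v^H u_\theta\,\delta B^H(\theta),
\]
whence $\mathbb D_s^H X_s=\int_0^T\phi(s-v)\,D_v^H X_s\,dv$. The polynomial-growth bounds on $u$, $f$ and their $x$-derivatives, the moment estimate (\ref{Lp estimates}) for $\eta$, and the isometry of Theorem \ref{Th.1_Hu 2005} then yield $\mathbb E|\mathbb D_s^H X_s|^2\le C$ uniformly in $s$, which is (iii); analogous estimates on $\mathbb E\|D_\cdot^H X_s\|_T^2$ give $X_s\in\mathbb D_{1,2}$, i.e. (ii). For (i) I would verify $uXI_{[0,t]}\in\mathbb L^{1,2}_H$, estimating $\mathbb E\|uX\|_T^2$ and the derivative norm via the Leibniz rule for $D^H$ and the uniform moment bounds just obtained, and then invoke the inclusion $\mathbb L^{1,2}_H\subset Dom(\delta)$ from Lemma \ref{property of VT}.

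For the identity itself, fix a partition $0=t_0<\cdots<t_n=t$ and telescope
\[
X_t^2-X_0^2=2\sum_i X_{t_i}\bigl(X_{t_{i+1}}-X_{t_i}\bigr)+\sum_i\bigl(X_{t_{i+1}}-X_{t_i}\bigr)^2.
\]
In the first sum the drift part converges in $L^2$ to $2\int_0^t X_sf_s\,ds$, while for the stochastic increments I would apply the product rule $G\,\delta(w)=\delta(Gw)+\int_0^T w_r\,\mathbb D_r^H G\,dr$ with $G=X_{t_i}$ and $w=uI_{[t_i,t_{i+1}]}$ (legitimate because $X_{t_i}uI_{[t_i,t_{i+1}]}\in Dom(\delta)$ by the argument for (i)), rewriting $X_{t_i}\int_{t_i}^{t_{i+1}}u_s\,\delta B^H(s)$ as $\int_{t_i}^{t_{i+1}}X_{t_i}u_s\,\delta B^H(s)+\int_{t_i}^{t_{i+1}}u_s\,\mathbb D_s^H X_{t_i}\,ds$. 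Letting the mesh tend to zero, the first pieces converge to $2\int_0^t X_su_s\,\delta B^H(s)$ and the second, by the continuity in time established in the first step, to $2\int_0^t u_s\,\mathbb D_s^H X_s\,ds$. Finally $\sum_i(X_{t_{i+1}}-X_{t_i})^2\to0$: the drift and cross contributions are $O(\text{mesh})$, and for the stochastic part the isometry of Theorem \ref{Th.1_Hu 2005} bounds $\mathbb E\sum_i(\int_{t_i}^{t_{i+1}}u_s\delta B^H(s))^2$ by $\sum_i\mathbb E\|uI_{[t_i,t_{i+1}]}\|_T^2+O(\text{mesh})$, and the diagonal-block sum is $O(t\,(\text{mesh})^{2H-1})\to0$ since $2H-1>0$.

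The step I expect to absorb most of the effort is the passage to the limit in the anticipating Riemann sums: showing $\sum_i\int_{t_i}^{t_{i+1}}X_{t_i}u_s\,\delta B^H(s)\to\int_0^t X_su_s\,\delta B^H(s)$ and $\sum_i\int_{t_i}^{t_{i+1}}u_s\,\mathbb D_s^H X_{t_i}\,ds\to\int_0^t u_s\,\mathbb D_s^H X_s\,ds$ in $L^2(\Omega)$. Both rest on continuity in the time variable, in the norms of $\mathcal H$ and of $L^2(\Omega;\mathcal H)$ respectively, of $s\mapsto X_s$ and $s\mapsto\mathbb D_\cdot^H X_s$; this is where the uniform moment estimates of the first step re-enter, now to control the $\mathcal H$-norm of the integrand increments and to apply the divergence isometry to the differences. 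I would stress that the vanishing of the quadratic sum is genuinely an $H>1/2$ phenomenon and must be kept conceptually separate from the product-rule correction, which is the true origin of the term $2\int_0^t u_s\,\mathbb D_s^H X_s\,ds$.
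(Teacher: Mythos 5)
Your argument is correct in outline, but it takes a genuinely different route from the one the paper uses for this theorem. The paper deliberately avoids partitions: it splits $X=Y+Z$ into the stochastic part $Y_t=\int_0^tu_s\delta B^H(s)$ and the drift part $Z_t=X_0+\int_0^tf_s\,ds$, and computes $Y_t^2$ (Lemma \ref{square ito formula}), $Z_t^2$ and $Y_tZ_t$ separately by pairing against an arbitrary elementary $F\in\mathcal{P}_T$, iterating the duality relation (\ref{equation for divergence operator}), symmetrizing the resulting double integrals in $(s,r)$, and then reading off $uXI_{[0,t]}\in Dom(\delta)$ directly from Definition \ref{divergence operator} once an $L^2$ bound on the candidate divergence is secured via fourth-moment estimates (Theorem 7.10 of \cite{H-05}). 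Your telescoping-plus-product-rule scheme is instead the discretization argument of Al\`os--Nualart, which is essentially what the paper itself runs in the Appendix to prove the more general formula (\ref{1 general Ito's formula for the divergence integral}); the paper explicitly notes that in Section 3 it uses ``a different approach.'' What each buys: the duality computation needs no time-continuity estimates and no limit of anticipating Riemann sums --- membership in $Dom(\delta)$ falls out of the adjoint relation (\ref{adjoint equation}) --- but it obscures where the trace term comes from and does not extend beyond $\psi(x)=x^2$ without redoing the algebra; your route makes the origin of $2\int_0^tu_s\mathbb{D}^H_sX_s\,ds$ transparent and generalizes to $C^2$ functions, at the price of the convergence analysis you correctly identify as the bulk of the work. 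One point to make explicit in that analysis: to pass from $\sum_i\delta(X_{t_i}uI_{[t_i,t_{i+1}]})$ to $\delta(uXI_{[0,t]})$ you should either show convergence of the step processes in $\mathbb{L}^{1,2}_H$ (which forces you to also control the Malliavin derivatives of the increments), or --- more economically, and consistent with the fact that you have already shown $uXI_{[0,t]}\in\mathbb{L}^{1,2}_H\subset Dom(\delta)$ --- use only convergence of the step processes in $L^2(\Omega;\mathcal{H})$ together with the convergence of every other term in the telescoped identity, and invoke the closedness of $\delta$ to identify the limit. Your estimate for the vanishing of the quadratic sum, $\sum_i\mathbb{E}\Vert uI_{[t_i,t_{i+1}]}\Vert_T^2=O(t\,(\mathrm{mesh})^{2H-1})$ plus an $O(\mathrm{mesh})$ contribution from the second term of the isometry in Theorem \ref{Th.1_Hu 2005}, is the right one and does use $H>1/2$ exactly as you say.
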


Before proving this theorem, we give the following lemma.
\begin{lemma}\label{square ito formula}
Let $u\in\mathcal{V}_{T}$ and $X_{t}={\displaystyle\int_{0}^{t}}u_{s}\delta B^{H}(s), ~t\in[0,T]$. Then $uXI_{[0,t]}\in Dom(\delta)$, $t\in[0,T]$, $\left(\mathbb{D}^{H}_{s}X_{s}\right)_{s\in[0,T]}\in L^{2}([0,T]\times\Omega)$, and
\[X_{t}^{2}=2{\displaystyle\int_{0}^{t}}X_{s}u_{s}\delta B^{H}(s)+2{\displaystyle\int_{0}^{t}}u_{s}\mathbb{D}^{H}_{s}X_{s}ds  , ~t\in[0,T].
\]
\end{lemma}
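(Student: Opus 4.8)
The plan is to apply the It\^{o} formula of Theorem \ref{Ito formula for the divergence integral} to $\psi(x)=x^{2}$ and the process $X_{t}=\int_{0}^{t}u_{s}\delta B^{H}(s)$, and then to recognize the two second-order correction terms as the single term $2\int_{0}^{t}u_{s}\mathbb{D}_{s}^{H}X_{s}\,ds$ by means of the commutation relation between the Malliavin derivative $D^{H}$ and the divergence integral. First I would check the hypotheses of Theorem \ref{Ito formula for the divergence integral}: since $u=\phi(\cdot,\eta(\cdot))$ with $\phi\in C_{pol}^{1,3}$ and $\eta$ is the divergence integral in~(\ref{definition of eta}), the chain rule yields $u\in\mathbb{D}^{2,2}_{loc}(|\mathcal{H}|)$; the a.s. continuity of $X$ follows from Lemma \ref{property of VT} (which gives $u\in Dom(\delta)$) together with the continuity of the divergence integral of $\mathcal{V}_{T}$-processes; and $(\mathbb{E}|u|^{2})^{1/2}\in\mathcal{H}$ holds because $\mathbb{E}|u_{s}|^{2}\leq C$ (exactly as in the proof of Lemma \ref{property of VT}) and $L^{2}([0,T])\subset\mathcal{H}$.

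Applying the formula with $\psi(0)=0$, $\psi'(x)=2x$ and $\psi''(x)=2$ gives
\[
X_{t}^{2}=2\int_{0}^{t}X_{s}u_{s}\delta B^{H}(s)+2H(2H-1)\int_{0}^{t}u_{s}\int_{0}^{T}|s-r|^{2H-2}\Big(\int_{0}^{s}D_{r}^{H}u_{\theta}\,\delta B^{H}(\theta)\Big)dr\,ds+2H(2H-1)\int_{0}^{t}u_{s}\Big(\int_{0}^{s}u_{\theta}|s-\theta|^{2H-2}d\theta\Big)ds .
\]
Thus it remains to show that the sum of the last two integrals equals $2\int_{0}^{t}u_{s}\mathbb{D}_{s}^{H}X_{s}\,ds$.

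To this end I would invoke the commutation relation for the divergence operator of a Gaussian process (cf. \cite{N-95}), namely
\[
D_{v}^{H}X_{s}=u_{v}\,I_{[0,s]}(v)+\int_{0}^{s}D_{v}^{H}u_{\theta}\,\delta B^{H}(\theta),
\]
obtained by approximating $u$ with elementary processes and passing to the limit. Substituting this into the definition (\ref{another derivative}) of $\mathbb{D}^{H}$, and recalling $\phi(x)=H(2H-1)|x|^{2H-2}$, gives
\[
\mathbb{D}_{s}^{H}X_{s}=H(2H-1)\int_{0}^{s}|s-v|^{2H-2}u_{v}\,dv+H(2H-1)\int_{0}^{T}|s-v|^{2H-2}\Big(\int_{0}^{s}D_{v}^{H}u_{\theta}\,\delta B^{H}(\theta)\Big)dv .
\]
Multiplying by $u_{s}$, integrating over $s\in[0,t]$ and relabelling the dummy variable $v$ as $\theta$ in the first integral and as $r$ in the second, the two summands reproduce exactly the two correction terms above, which proves the stated formula.

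The membership claims are obtained along the way: $uXI_{[0,t]}\in Dom(\delta)$ because the term $\int_{0}^{t}X_{s}u_{s}\delta B^{H}(s)$ produced by the It\^{o} formula is by definition a divergence integral, so $uXI_{[0,t]}$ lies in $Dom(\delta)$; and $(\mathbb{D}_{s}^{H}X_{s})_{s\in[0,T]}\in L^{2}([0,T]\times\Omega)$ follows from the two-term decomposition of $\mathbb{D}_{s}^{H}X_{s}$, bounding the first (deterministic-kernel) piece via $\mathbb{E}|u_{v}|^{2}\leq C$ together with the integrability of $|s-v|^{2H-2}$ (valid since $2H-2>-1$), and the second piece via the isometry of Theorem \ref{Th.1_Hu 2005} applied to the inner integral $\int_{0}^{s}D_{v}^{H}u_{\theta}\delta B^{H}(\theta)$. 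I expect the main obstacle to be the rigorous justification of the commutation relation and the attendant integrability bookkeeping: one must verify that every iterated integral is well defined, that the inner divergence integral $\int_{0}^{s}D_{v}^{H}u_{\theta}\delta B^{H}(\theta)$ belongs to the right $L^{2}$-space, and that the kernel integration $\int_{0}^{T}\phi(s-v)(\cdot)\,dv$ may legitimately be interchanged with it. This is precisely where the regularity $u\in\mathbb{D}^{2,2}_{loc}$ and the $L^{2}$-estimates for (iterated) divergence integrals from Theorem \ref{Th.1_Hu 2005} are needed.
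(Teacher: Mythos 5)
Your argument is essentially correct, but it takes a genuinely different route from the paper's own proof of this lemma. The paper does \emph{not} invoke the Al\`os--Nualart It\^o formula here: it proves the identity by duality, computing $\mathbb{E}[X_t^2F]$ for elementary $F\in\mathcal{P}_T$ directly from Definition \ref{divergence operator}, splitting the result into three terms $I_1,I_2,I_3$, symmetrizing in $(s,r)$, and recognizing $I_1$ as $\mathbb{E}\big[\int_0^t u_sX_s\mathbb{D}^H_sF\,ds\big]$ and $I_2+I_3$ as $\mathbb{E}\big[F\int_0^t u_s\mathbb{D}^H_sX_s\,ds\big]$ via the same commutation relation you use (this is exactly (\ref{Malliavin derivative of X})); the membership $uXI_{[0,t]}\in Dom(\delta)$ is then obtained by checking, with the fourth-moment estimates of Theorem 7.10 \cite{H-05}, that $uXI_{[0,t]}\in L^2(\Omega,\mathcal{F},P;\mathcal{H})$ and that the candidate for $\delta(uXI_{[0,t]})$ is square integrable. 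Your route --- apply Theorem \ref{Ito formula for the divergence integral} to $\psi(x)=x^2$ and repackage the two correction terms as $2\int_0^t u_s\mathbb{D}^H_sX_s\,ds$ --- is precisely the mechanism of the paper's Appendix (Theorem \ref{general Ito formula for the divergence integral} together with Remark \ref{remark for genelized ito formula}), and your identification of the correction terms is correct. What the duality proof buys is self-containedness and weaker hypotheses; in particular it never needs pathwise continuity of $X$.

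That continuity is the one real soft spot in your verification of the hypotheses of Theorem \ref{Ito formula for the divergence integral}. Lemma \ref{property of VT} only gives $u\in\mathbb{L}^{1,2}_{H}\subset Dom(\delta)$, which says nothing about the path regularity of $t\mapsto X_t$, and ``the continuity of the divergence integral of $\mathcal{V}_T$-processes'' is not established anywhere before this point, so as written your justification is circular. It is repairable (the $L^p$ bounds for the divergence integral from Theorem 7.10 \cite{H-05} give $\mathbb{E}|X_t-X_r|^{p}\leq C|t-r|^{pH}$ for $u\in\mathcal{V}_T$, and Kolmogorov's criterion then yields a continuous modification), but this step must be supplied rather than cited. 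The remaining ingredients --- the commutation relation $D^H_vX_s=u_vI_{[0,s]}(v)+\int_0^sD^H_vu_\theta\,\delta B^H(\theta)$ and the $L^2([0,T]\times\Omega)$ bookkeeping for $\mathbb{D}^H_sX_s$ --- are sound and coincide with what the paper itself uses.
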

\begin{proof}
Let $F\in\mathcal{P}_{T}$. Then, since obviously $X_{t}F\in\mathbb{D}_{1,2}$, we have from Definition \ref{divergence operator}
\[
\begin{array}
[c]{l}
\mathbb{E}\left[X_{t}^{2}F\right]=\mathbb{E}\left[X_{t}(X_{t}F)\right]=\mathbb{E}\left[{\displaystyle\int_{0}^{T}}u_{s}I_{[0,t](s)}\delta B^{H}(s)(X_{t}F)\right]\medskip\\
=\mathbb{E}\left[{\displaystyle\int_{0}^{T}}u_{s}I_{[0,t](s)}\mathbb{D}^{H}_{s}(X_{t}F)ds\right]
=\mathbb{E}\left[\left({\displaystyle\int_{0}^{t}}u_{s}\mathbb{D}^{H}_{s}Fds\right)X_{t}\right]
+\mathbb{E}\left[\left({\displaystyle\int_{0}^{t}}u_{s}\mathbb{D}^{H}_{s}X_{t}ds\right)F\right].
\end{array}
\]
Here, we have used $X\in\mathbb{L}^{1,2}_{H}$  and, hence also $XF\in\mathbb{L}^{1,2}_{H}$. In particular, we observe that
\begin{equation}\label{Malliavin derivative of X}
\mathbb{D}^{H}_{s}X_{t}={\displaystyle\int_{0}^{t}}\phi(s-r)u_{r}dr+{\displaystyle\int_{0}^{t}}\mathbb{D}^{H}_{s}u_{r}\delta B^{H}(r),
~s,t\in[0,T].
\end{equation}
Moreover, since ${\displaystyle\int_{0}^{t}}u_{s}\mathbb{D}^{H}_{s}Fds\in\mathbb{D}_{1,2}$, we get again from Definition \ref{divergence operator}
\[
\begin{array}
[c]{l}
\mathbb{E}\left[\left({\displaystyle\int_{0}^{t}}u_{s}\mathbb{D}^{H}_{s}Fds\right)X_{t}\right]
=\mathbb{E}\left[\left({\displaystyle\int_{0}^{t}}u_{s}\mathbb{D}^{H}_{s}Fds\right){\displaystyle\int_{0}^{t}}u_{s}\delta B^{H}(s)\right]\medskip\\
\qquad\qquad\qquad\qquad\qquad\quad
=\mathbb{E}\left[{\displaystyle\int_{0}^{t}}{\displaystyle\int_{0}^{t}}\mathbb{D}^{H}_{r}\left(u_{s}\mathbb{D}^{H}_{s}F\right)u_{r}drds\right].
\end{array}
\]
On the other hand, using (\ref{Malliavin derivative of X}) it follows
\[
\mathbb{E}\left[\left({\displaystyle\int_{0}^{t}}u_{s}\mathbb{D}^{H}_{s}X_{t}ds\right)F\right]=
\mathbb{E}\left[{\displaystyle\int_{0}^{t}}{\displaystyle\int_{0}^{t}}\phi(s-r)u_{s}u_{r}dsdr\cdot F\right]+
{\displaystyle\int_{0}^{t}}\mathbb{E}\left[{\displaystyle\int_{0}^{t}}\mathbb{D}^{H}_{r}(u_{s}F)\mathbb{D}^{H}_{s}u_{r}dr\right]ds.
\]
Therefore, by combining the above relations, we obtain
\[
\begin{array}
[c]{l}
\mathbb{E}\left[X_{t}^{2}F\right]=\mathbb{E}\left[\left({\displaystyle\int_{0}^{t}}u_{s}\mathbb{D}^{H}_{s}Fds\right)X_{t}\right]
+\mathbb{E}\left[\left({\displaystyle\int_{0}^{t}}u_{s}\mathbb{D}^{H}_{s}X_{t}ds\right)F\right]\medskip\\
=\mathbb{E}\left[{\displaystyle\int_{0}^{t}}{\displaystyle\int_{0}^{t}}\mathbb{D}^{H}_{r}\left(u_{s}\mathbb{D}^{H}_{s}F\right)u_{r}drds\right]
+\mathbb{E}\left[{\displaystyle\int_{0}^{t}}{\displaystyle\int_{0}^{t}}\phi(s-r)u_{s}u_{r}dsdr\cdot F\right]\medskip\\
\qquad\qquad\qquad\qquad\qquad
+\mathbb{E}\left[{\displaystyle\int_{0}^{t}}{\displaystyle\int_{0}^{t}}\mathbb{D}^{H}_{r}(u_{s}F)\mathbb{D}^{H}_{s}u_{r}drds\right].
\end{array}
\]
By noticing that the right-hand side of the above equality is symmetric in $(s,r)$ we deduce
\begin{equation}\label{I1I2I3}
\begin{array}
[c]{l}
\mathbb{E}\left[X_{t}^{2}F\right]
=2\mathbb{E}\left[{\displaystyle\int_{0}^{t}}{\displaystyle\int_{0}^{s}}\mathbb{D}^{H}_{r}\left(u_{s}\mathbb{D}^{H}_{s}F\right)u_{r}drds\right]
+2\mathbb{E}\left[{\displaystyle\int_{0}^{t}}{\displaystyle\int_{0}^{s}}\phi(s-r)u_{s}u_{r}dsdr\cdot F\right]\medskip\\
\qquad\qquad\qquad\qquad\qquad
+2\mathbb{E}\left[{\displaystyle\int_{0}^{t}}{\displaystyle\int_{0}^{s}}\mathbb{D}^{H}_{r}(u_{s}F)\mathbb{D}^{H}_{s}u_{r}drds\right]\medskip\\
\qquad\qquad:=2I_{1}+2I_{2}+2I_{3}.
\end{array}
\end{equation}
Let us begin with the evaluation of $I_{1}$.
Obviously, by using that $uI_{[0,s]}\in\mathbb{L}^{1,2}_{H}\subset Dom(\delta)$ and $u_{s}\mathbb{D}_{s}^{H}F\in\mathbb{D}_{1,2}$,
we have from Fubini's Theorem and Definition \ref{divergence operator}
\begin{equation}\label{I1}
\begin{array}
[c]{l}
I_{1}=\mathbb{E}\left[{\displaystyle\int_{0}^{t}}{\displaystyle\int_{0}^{s}}\mathbb{D}^{H}_{r}\left(u_{s}\mathbb{D}^{H}_{s}F\right)u_{r}drds\right]
={\displaystyle\int_{0}^{t}}\mathbb{E}
\left[{\displaystyle\int_{0}^{T}}\mathbb{D}^{H}_{r}\left(u_{s}\mathbb{D}^{H}_{s}F\right)u_{r}I_{[0,s](r)}dr\right]ds\medskip\\
={\displaystyle\int_{0}^{t}}\mathbb{E}
\left[{\displaystyle\int_{0}^{T}}u_{r}I_{[0,s](r)}\delta B^{H}(r)u_{s}\mathbb{D}^{H}_{s}F\right]ds
=\mathbb{E}
\left[{\displaystyle\int_{0}^{t}}u_{s}X_{s}\mathbb{D}^{H}_{s}Fds\right].
\end{array}
\end{equation}
On the other hand, since also $\mathbb{D}_{s}^{H}uI_{[0,s]}\in\mathbb{L}^{1,2}_{H}\subset Dom(\delta)$ and $u_{s}F\in\mathbb{D}_{1,2}$, $s\in[0,t]$,
we obtain again from Fubini's Theorem as well as Definition \ref{divergence operator} that
\[
\begin{array}
[c]{l}
\mathbb{E}\left[{\displaystyle\int_{0}^{t}}{\displaystyle\int_{0}^{s}}\mathbb{D}^{H}_{r}(u_{s}F)\mathbb{D}^{H}_{s}u_{r}drds\right]
={\displaystyle\int_{0}^{t}}\mathbb{E}\left[{\displaystyle\int_{0}^{s}}\mathbb{D}^{H}_{r}(u_{s}F)\mathbb{D}^{H}_{s}u_{r}dr\right]ds\medskip\\
={\displaystyle\int_{0}^{t}}\mathbb{E}\left[{\displaystyle\int_{0}^{s}}\mathbb{D}^{H}_{s}u_{r}\delta B^{H}(r)u_{s}\cdot F\right]ds
=\mathbb{E}\left[{\displaystyle\int_{0}^{t}}\left({\displaystyle\int_{0}^{s}}\mathbb{D}^{H}_{s}u_{r}\delta B^{H}(r)\right)u_{s}\cdot Fds\right].
\end{array}
\]
Thus, due to (\ref{Malliavin derivative of X})
\begin{equation}\label{I2 plus I3}
\begin{array}
[c]{l}
I_{2}+I_{3}=\mathbb{E}\left[{\displaystyle\int_{0}^{t}}{\displaystyle\int_{0}^{s}}\phi(s-r)u_{s}u_{r}dsdr\cdot F\right]+\mathbb{E}\left[{\displaystyle\int_{0}^{t}}{\displaystyle\int_{0}^{s}}\mathbb{D}^{H}_{r}(u_{s}F)\mathbb{D}^{H}_{s}u_{r}drds\right]\medskip\\
=\mathbb{E}\left[{\displaystyle\int_{0}^{t}}u_{s}
\left({\displaystyle\int_{0}^{s}}\phi(s-r)u_{r}dr+{\displaystyle\int_{0}^{s}}\mathbb{D}^{H}_{s}u_{r}\delta B^{H}(r)\right)ds\cdot F\right]
=\mathbb{E}\left[F{\displaystyle\int_{0}^{t}}u_{s}\mathbb{D}^{H}_{s}X_{s}ds\right].
\end{array}
\end{equation}
Consequently, from (\ref{I1I2I3})-(\ref{I2 plus I3}),
\begin{equation}\label{adjoint equation}
\mathbb{E}\left[2{\displaystyle\int_{0}^{T}}u_{s}X_{s}I_{[0,t]}(s)\mathbb{D}^{H}_{s}Fds\right]
=\mathbb{E}
\left[\left(X_{t}^{2}-2{\displaystyle\int_{0}^{t}}u_{s}\mathbb{D}^{H}_{s}X_{s}ds\right)F\right],~\text{ for all } F\in\mathcal{P}_{T}.
\end{equation}
On the other hand, from Theorem 7.10 \cite{H-05} and the fact that $u\in\mathcal{V}_{T}$, it follows that there exists $C>0$ such that
\[
\begin{array}
[c]{l}
\mathbb{E}\left({\displaystyle\int_{0}^{t}}u_{s}\delta B^{H}(s)\right)^{4}
\leq C\mathbb{E}\|u\|_{T}^{4}
+C\mathbb{E}\left({\displaystyle\int_{0}^{T}}{\displaystyle\int_{0}^{T}}|\mathbb{D}^{H}_{t}u_{s}|^{2}dsdt\right)^{2}\medskip\\
\leq C\mathbb{E}\left({\displaystyle\int_{0}^{T}}|u_{s}|^{2}ds\right)^{2}
+C\mathbb{E}{\displaystyle\int_{0}^{T}}{\displaystyle\int_{0}^{T}}|\mathbb{D}^{H}_{t}u_{s}|^{4}dsdt\leq C,\text{ for all }t\in[0,T],
\end{array}
\]
as well as
\[
\begin{array}
[c]{l}
\mathbb{E}\left({\displaystyle\int_{0}^{t}}\mathbb{D}^{H}_{s}u_{r}\delta B^{H}(r)\right)^{4}
\leq C\mathbb{E}\|\mathbb{D}^{H}_{s}u\|_{T}^{4}
+C\mathbb{E}\left({\displaystyle\int_{0}^{T}}{\displaystyle\int_{0}^{T}}|\mathbb{D}^{H}_{t}(\mathbb{D}^{H}_{s}u_{r})|^{2}drdt\right)^{2}\medskip\\
\leq C+C\mathbb{E}{\displaystyle\int_{0}^{T}}{\displaystyle\int_{0}^{T}}|u_{xx}(r,\eta(r))|^{4}dr\leq C, \text{ for all } t\in[0,T].
\end{array}
\]
Taking into account the definition of the process $X$, we deduce from the above two estimates and Theorem \ref{Th.1_Hu 2005} that
\[
uX\in L^{2}(\Omega,\mathcal{F},P;\mathcal{H}) \text{ and } X_{t}^{2}-2{\displaystyle\int_{0}^{t}}u_{s}\mathbb{D}^{H}_{s}X_{s}ds\in L^{2}(\Omega,\mathcal{F},P).
\]
Therefore, from (\ref{adjoint equation}) and Definition \ref{divergence operator} it follows that $uXI_{[0,t]}\in Dom(\delta)$
and
\[
2{\displaystyle\int_{0}^{t}}u_{s}X_{s}\delta B^{H}(s)=X_{t}^{2}-2{\displaystyle\int_{0}^{t}}u_{s}\mathbb{D}^{H}_{s}X_{s}ds.
\]
\hfill
\end{proof}

\begin{proof}[Proof of Theorem \ref{Ito formula for divergence type integral}]
Let
\[
Y_{t}:={\displaystyle\int_{0}^{t}}u_{s}\delta B^{H}(s)\text{ and }Z_{t}:=X_{0}  +{\displaystyle\int_{0}^{t}}f_{s}ds,~t\in[0,T].
\]
From the previous lemma, we know that $uYI_{[0,t]}\in Dom(\delta)$, for all $t\in[0,T]$, and
\[
Y_{t}^{2}=2{\displaystyle\int_{0}^{t}}u_{s}Y_{s}\delta B^{H}(s) +2{\displaystyle\int_{0}^{t}}u_{s}\mathbb{D}^{H}_{s}Y_{s}ds,~t\in[0,T].
\]
On the other hand, it is obvious that $Z_{t}^{2}=X_{0}^{2}+2{\displaystyle\int_{0}^{t}}f_{s}Z_{s}ds, ~t\in[0,T].$
Moreover, we assert that $uZI_{[0,t]}\in Dom(\delta)$, for all $t\in[0,T]$, and
\[
Y_{t}Z_{t}={\displaystyle\int_{0}^{t}}u_{s}Z_{s}\delta B^{H}(s)+{\displaystyle\int_{0}^{t}}f_{s}Y_{s}ds
+{\displaystyle\int_{0}^{t}}u_{s}\mathbb{D}_{s}^{H}Z_{s}ds,~t\in[0,T].
\]
Indeed, since $Z_{t}F\in\mathbb{D}_{1,2}$ and $\mathbb{D}^{H}_{s}(Z_{t}F)=\mathbb{D}^{H}_{s}(Z_{s}F)+{\displaystyle\int_{s}^{t}}\mathbb{D}^{H}_{s}(f_{r}F)dr$, $s\in[0,t]$, we have
\[
\begin{array}
[c]{l}
\mathbb{E}\left[Y_{t}Z_{t}F\right]=\mathbb{E}\left[\left({\displaystyle\int_{0}^{T}}u_{s}I_{[0,t]}(s)\delta B^{H}(s)\right)Z_{t}F\right]
=\mathbb{E}\left[{\displaystyle\int_{0}^{t}}u_{s}\mathbb{D}^{H}_{s}(Z_{t}F)ds\right]\medskip\\
=\mathbb{E}\left[{\displaystyle\int_{0}^{t}}u_{s}\mathbb{D}^{H}_{s}(Z_{s}F)ds\right]
+\mathbb{E}\left[{\displaystyle\int_{0}^{t}}{\displaystyle\int_{s}^{t}}u_{s}\mathbb{D}^{H}_{s}(f_{r}F)drds\right]\medskip\\
=\mathbb{E}\left[{\displaystyle\int_{0}^{t}}u_{s}\mathbb{D}^{H}_{s}Z_{s}ds\cdot F\right]
+\mathbb{E}\left[{\displaystyle\int_{0}^{t}}u_{s}Z_{s}\mathbb{D}^{H}_{s}Fds\right]
+{\displaystyle\int_{0}^{t}}\mathbb{E}\left[{\displaystyle\int_{0}^{r}}u_{s}\mathbb{D}^{H}_{s}(f_{r}F)ds\right]dr\medskip\\
=\mathbb{E}\left[{\displaystyle\int_{0}^{t}}u_{s}\mathbb{D}^{H}_{s}Z_{s}ds\cdot F\right]
+\mathbb{E}\left[{\displaystyle\int_{0}^{t}}u_{s}Z_{s}\mathbb{D}^{H}_{s}Fds\right]
+\mathbb{E}\left[{\displaystyle\int_{0}^{t}}Y_{r}f_{r}Fdr\right].
\end{array}
\]
Therefore,
\[
\mathbb{E}\left[{\displaystyle\int_{0}^{T}}u_{s}Z_{s}I_{[0,t]}(s)\mathbb{D}^{H}_{s}Fds\right]
=\mathbb{E}\left[\left(Y_{t}Z_{t}-{\displaystyle\int_{0}^{t}}u_{s}\mathbb{D}^{H}_{s}Z_{s}ds
-{\displaystyle\int_{0}^{t}}Y_{r}f_{r}dr\right)F\right], ~F\in\mathcal{P}_{T},
\]
and since $uZI_{[0,t]}\in L^{2}(\Omega,\mathcal{F},P;\mathcal{H})$ as well as $Y_{t}Z_{t}-{\displaystyle\int_{0}^{t}}u_{s}\mathbb{D}^{H}_{s}Z_{s}ds-{\displaystyle\int_{0}^{t}}Y_{s}f_{s}ds\in L^{2}(\Omega,\mathcal{F},P)$,
we conclude from Definition \ref{divergence operator} that $uZI_{[0,t]}\in Dom(\delta)$ and
\[
{\displaystyle\int_{0}^{t}}u_{s}Z_{s}\delta B^{H}(s)=Y_{t}Z_{t}-{\displaystyle\int_{0}^{t}}u_{s}\mathbb{D}^{H}_{s}Z_{s}ds-{\displaystyle\int_{0}^{t}}Y_{s}f_{s}ds.
\]
Consequently, using the above notation as well as the linearity of $Dom(\delta)$, we have $X_{t}=Y_{t}+Z_{t}$, $uXI_{[0,t]}\in Dom(\delta)$ and
\[
\begin{array}
[c]{l}
X_{t}^{2}=Y_{t}^{2}+2Y_{t}Z_{t}+Z_{t}^{2}\medskip\\
=X_{0}^{2}+2{\displaystyle\int_{0}^{t}}X_{s}f_{s}ds+2{\displaystyle\int_{0}^{t}}X_{s}u_{s}\delta B^{H}(s)+2{\displaystyle\int_{0}^{t}}u_{s}\mathbb{D}^{H}_{s}X_{s}ds  , ~ t\in[0,T].
\end{array}
\]
\hfill
\end{proof}

Emphasizing that the It\^{o}$-$Skorohod integral and the divergence-type integral coincide for all $u\in\mathbb{L}^{1,2}_{H}$. Then,
from Hu and Peng Lemma 4.2 \cite{HP-09} the following lemma holds true:
\begin{lemma}\label{Lemma 4.2_HP-09}
Let $a,b\in C^{0,1}_{pol}([0,T]\times\mathbb{R})$. If
\[
\int_{0}^{t}b(s,\eta(s))ds+\int_{0}^{t}a(s,\eta(s))\delta B^{H}\left(  s\right)
=0,\text{ for all } t\in[0,T],
\]
then
\[
b(s,x)=a(s,x)=0,\text{ for all } t\in[0,T], ~x\in\mathbb{R}.
\]
\end{lemma}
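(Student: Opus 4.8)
The plan is to turn the identity, which holds for every $t\in[0,T]$, into a pointwise‑in‑$s$ family of scalar equations via the divergence duality, and then to separate the contributions of $a$ and $b$ by exploiting the freedom in the test random variable. First I would fix an arbitrary $F\in\mathbb{D}_{1,2}$, multiply the hypothesis by $F$ and take expectations. Since $a\in C^{0,1}_{pol}([0,T]\times\mathbb{R})$, the computation in the proof of Lemma \ref{property of VT} shows that $a(\cdot,\eta(\cdot))I_{[0,t]}\in\mathbb{L}^{1,2}_{H}\subset Dom(\delta)$, so the duality relation (\ref{equation for divergence operator}) applies to the stochastic integral and produces
\[
\int_{0}^{t}\mathbb{E}\big[b(s,\eta(s))F+a(s,\eta(s))\mathbb{D}^{H}_{s}F\big]\,ds=0,\qquad t\in[0,T].
\]
Because $a,b$ are continuous with polynomial growth and $s\mapsto\eta(s)$ is continuous in every $L^{p}$, the integrand is continuous in $s$; differentiating in $t$ therefore gives, for a.e. $s$ and hence (by continuity) for all $s$, the pointwise identity
\[
\mathbb{E}\big[b(s,\eta(s))F\big]+\mathbb{E}\big[a(s,\eta(s))\mathbb{D}^{H}_{s}F\big]=0,\qquad s\in[0,T],\ F\in\mathbb{D}_{1,2}.
\]

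The next step is to feed into this identity the Wick exponentials $F_{g}=\exp\big(\int_{0}^{T}g(v)\delta B^{H}(v)-\frac12\|g\|_{T}^{2}\big)$, $g\in\mathcal{H}$, which lie in $\mathbb{D}_{1,2}$ and possess all moments. For these one has $D^{H}_{v}F_{g}=g(v)F_{g}$, whence
\[
\mathbb{D}^{H}_{s}F_{g}=\int_{0}^{T}\phi(s-v)g(v)\,dv\,F_{g}=:\hat{g}(s)\,F_{g},
\]
and, since $\big(\int_{0}^{s}\sigma\,\delta B^{H},\int_{0}^{T}g\,\delta B^{H}\big)$ is jointly Gaussian, a Cameron--Martin (Girsanov) computation yields, for any $\Phi$ of polynomial growth,
\[
\mathbb{E}\big[\Phi(\eta(s))F_{g}\big]=\mathbb{E}\big[\Phi(\eta(s)+c_{g}(s))\big],\qquad c_{g}(s):=\langle\sigma I_{[0,s]},g\rangle_{T}=\int_{0}^{s}\sigma(u)\hat{g}(u)\,du.
\]
Substituting $F=F_{g}$ into the pointwise identity turns it into the scalar relation
\[
\mathbb{E}\big[b(s,\eta(s)+c_{g}(s))\big]+\hat{g}(s)\,\mathbb{E}\big[a(s,\eta(s)+c_{g}(s))\big]=0,\qquad g\in\mathcal{H}.
\]

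The decoupling is the heart of the argument. Fix $s\in(0,T]$. I would show that, as $g$ ranges over $\mathcal{H}$, the pair $(\hat{g}(s),c_{g}(s))$ exhausts all of $\mathbb{R}^{2}$: the map $g\mapsto(\hat g(s),c_g(s))$ is linear, so it suffices to exhibit $g_{1},g_{2}$ for which the matrix with rows $(\hat{g}_{i}(s),c_{g_{i}}(s))$ is nonsingular, e.g. by taking $g_{i}=\sigma I_{[0,s_{i}]}$ with $0<s_{1}<s_{2}=s$ and checking, through Remark \ref{remark for sigma}, that the determinant is nonzero. Writing $\beta=\hat{g}(s)$ and $\gamma=c_{g}(s)$, the relation above reads $\mathbb{E}[b(s,\eta(s)+\gamma)]+\beta\,\mathbb{E}[a(s,\eta(s)+\gamma)]=0$ for all $\beta,\gamma\in\mathbb{R}$; being affine in the free parameter $\beta$, it forces both $\mathbb{E}[a(s,\eta(s)+\gamma)]=0$ and $\mathbb{E}[b(s,\eta(s)+\gamma)]=0$ for every $\gamma$. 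For $s>0$ the law of $\eta(s)$ is Gaussian with variance $\|\sigma\|_{s}^{2}>0$ (positive by Remark \ref{remark for sigma}(a)), so these say that the convolutions of $a(s,\cdot)$ and $b(s,\cdot)$ with a nondegenerate Gaussian vanish identically; the injectivity of the Gaussian (Weierstrass) transform on functions of polynomial growth then gives $a(s,\cdot)\equiv b(s,\cdot)\equiv 0$, while the case $s=0$ is immediate because $\eta(0)$ is deterministic.

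The step I expect to be the main obstacle is exactly this decoupling. Unlike the Brownian case, the nonlocality of $\phi$ prevents one from localizing $F$ so as to annihilate $\mathbb{D}^{H}_{s}F$, so the separation of $a$ from $b$ cannot be read off directly and must instead be extracted from the two‑parameter family $(\hat{g}(s),c_{g}(s))$; the technical crux is verifying that this family spans $\mathbb{R}^{2}$, together with the integrability bookkeeping (Cauchy--Schwarz against the moments of $F_{g}$ and of the Gaussian $\eta(s)$) needed to legitimize the duality step for the polynomially growing coefficients $a$ and $b$.
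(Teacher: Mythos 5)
The paper itself does not prove Lemma \ref{Lemma 4.2_HP-09}: it imports the statement from Lemma 4.2 of Hu and Peng \cite{HP-09}, merely observing beforehand that the It\^{o}--Skorohod and divergence integrals coincide on $\mathbb{L}^{1,2}_{H}$ so that the cited result applies. Your argument is therefore a self-contained alternative rather than a reconstruction of anything in this text, and its architecture is sound: the duality relation (\ref{equation for divergence operator}) applied to $a(\cdot,\eta(\cdot))I_{[0,t]}\in\mathbb{L}^{1,2}_{H}$ (justified exactly as in Lemma \ref{property of VT}, whose computation only uses one $x$-derivative, so $C^{0,1}_{pol}$ suffices), differentiation in $t$, testing against Wick exponentials $F_{g}\in\mathbb{D}_{1,2}$, the Cameron--Martin shift, and the injectivity of the Gaussian convolution on continuous functions of polynomial growth are all legitimate steps, and the decoupling through the affine parameter $\beta=\hat g(s)$ is the right mechanism. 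What this buys over the bare citation is that it makes visible where $(H_{2})$ and the positivity of $\Vert\sigma\Vert_{s}^{2}$ from Remark \ref{remark for sigma} actually enter.

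Two points need tightening before the proof is complete. First, the nondegeneracy of $g\mapsto(\hat g(s),c_{g}(s))$ is the crux and is only asserted. The clean argument is not a determinant computation for one particular pair but the observation that the image is a linear subspace of $\mathbb{R}^{2}$, which could only fail to be everything if some nontrivial relation $\alpha\hat g(s)+\beta c_{g}(s)=0$ held for every $g$, i.e. if $\alpha\phi(s-v)+\beta\int_{0}^{s}\sigma(u)\phi(u-v)\,du=0$ for a.e. $v$; this is impossible because $\phi(s-v)$ blows up as $v\to s$ while the second kernel stays bounded (the singularity of $\phi$ is integrable), and the second kernel is not identically zero since $\sigma$ is continuous, nonvanishing, hence of constant sign. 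Your specific choice $g_{i}=\sigma I_{[0,s_{i}]}$ does work for \emph{some} $s_{1}\in(0,s)$, but the determinant degenerates both as $s_{1}\to 0$ and as $s_{1}\to s$, so an intermediate value must genuinely be exhibited --- again via the singularity of $\phi$ on the diagonal. Second, the case $s=0$ is not ``immediate because $\eta(0)$ is deterministic'': at $s=0$ the identity only constrains $a(0,\cdot)$ and $b(0,\cdot)$ at the single point $x=\eta(0)$. The correct conclusion there is by continuity of $a$ and $b$ in $s$, letting $s\downarrow 0$ in $a(s,x)=b(s,x)=0$. With these two repairs the proposal gives a valid proof.
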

\subsection{ Quasi-Conditional Expectation }
In this subsection, we recall the quasi-conditional expectation which was introduced by Hu and {\O}ksendal \cite{HO-03}.

For any $n\ge 1$, we introduce the set $\mathcal{H}^{\otimes n}$ of all real symmetric Borel functions $f_{n}$ of $n$ variables such that
\[
\|f_{n}\|^{2}_{\mathcal{H}^{\otimes n}}:=
{\displaystyle\int_{\mathbb{R}_{+}^{n}\times\mathbb{R}_{+}^{n}}}\prod\limits_{i=1}\limits^{n}\phi(s_{i}-r_{i})
f_{n}(s_{1},\ldots,s_{n})f_{n}(r_{1},\ldots,r_{n})ds_{1}\ldots ds_{n}dr_{1}\ldots dr_{n}<\infty.
\]
Then one can define the iterated integral (see \cite{BHOZ-06,H-05,HO-03} )
\[
I_{n}(f_{n})=n!{\displaystyle\int_{0\leq t_{1}<\ldots<t_{n}}}f_{n}(t_{1},\ldots,t_{n})dB^{H}(t_{1})\cdots dB^{H}(t_{n}).
\]
in the sense of It\^{o}$-$Skorohod.
For $n=0$ and $f=f_{0}$ be a constant, we set $I_{0}(f_{0})=f_{0}$ and $\|f_{0}\|^{2}_{\mathcal{H}^{\otimes 0}}=f_{0}^{2}$.

We recall the following theorem, see Theorem 3.9.9 \cite{BHOZ-06} or \cite{DHP-00} (Theorem 6.11) or \cite{HO-03} (Theorem 3.22).
\begin{theorem}\label{chaos expansion}
Let $F\in L^{2}(\Omega,\mathcal{F},P)$. Then there exist $f_{n}\in  \mathcal{H}^{\otimes n}$, $n\ge 0$ such that
\[
F=\sum\limits_{n=0}\limits^{\infty}I_{n}(f_{n}).
\]
Moreover,
\[
\mathbb{E}|F|^{2}=\sum\limits_{n=0}\limits^{\infty}n!\|f_{n}\|^{2}_{\mathcal{H}^{\otimes n}}<\infty.
\]
The convergence in this chaos expansion of $F$ is understood in the sense of $L^{2}(\Omega,\mathcal{F},P)$.
\end{theorem}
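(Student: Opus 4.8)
I would prove this by realizing $B^{H}$ as an isonormal Gaussian process and then invoking the abstract Wiener chaos decomposition. First I would record that the map $W:\mathcal{H}\to L^{2}(\Omega,\mathcal{F},P)$ defined on step functions by $W(h):=\int_{0}^{T}h(s)\,dB^{H}(s)$ extends, via the isometry $\mathbb{E}[W(h)W(g)]=\langle h,g\rangle_{T}$, to a linear isometry of $\mathcal{H}$ onto a closed Gaussian subspace $\mathcal{H}_{1}\subset L^{2}(\Omega,\mathcal{F},P)$ (the first chaos); concretely, $(W(h))_{h\in\mathcal{H}}$ is a centered Gaussian family with the prescribed covariance. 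Since $\mathcal{F}$ is generated by $B^{H}$, the family $\{W(h):h\in\mathcal{H}\}$ generates $\mathcal{F}$, which is precisely what makes the chaos machinery applicable.

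Next I would set up the multiple integrals $I_{n}$ and establish their isometry. For a unit vector $h\in\mathcal{H}$ one has the Hermite-polynomial identity $I_{n}(h^{\otimes n})=\mathrm{He}_{n}(W(h))$, and more generally $I_{n}(h^{\otimes n})=\|h\|_{T}^{n}\,\mathrm{He}_{n}\bigl(W(h)/\|h\|_{T}\bigr)$, where $\mathrm{He}_{n}$ is the $n$-th (probabilist's) Hermite polynomial. Using the orthogonality of the Hermite polynomials under the standard Gaussian law, together with polarization, one obtains the fundamental isometry
\[
\mathbb{E}[I_{n}(f_{n})I_{m}(g_{m})]=\delta_{nm}\,n!\,\langle f_{n},g_{m}\rangle_{\mathcal{H}^{\otimes n}},
\]
and in particular $\mathbb{E}|I_{n}(f_{n})|^{2}=n!\,\|f_{n}\|_{\mathcal{H}^{\otimes n}}^{2}$. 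This shows that the chaoses $\mathcal{H}_{n}:=\{I_{n}(f_{n}):f_{n}\in\mathcal{H}^{\otimes n}\}$ are mutually orthogonal closed subspaces of $L^{2}(\Omega,\mathcal{F},P)$, and it already delivers the stated norm identity once $L^{2}$-convergence of the series is in hand.

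The heart of the matter, and the step I expect to be the main obstacle, is completeness: that $L^{2}(\Omega,\mathcal{F},P)=\bigoplus_{n\ge0}\mathcal{H}_{n}$. The plan is to prove that the exponentials $\mathcal{E}(h):=\exp\bigl(W(h)-\tfrac{1}{2}\|h\|_{T}^{2}\bigr)$, $h\in\mathcal{H}$, form a total set. If $F\in L^{2}(\Omega,\mathcal{F},P)$ satisfies $\mathbb{E}[F\,\mathcal{E}(h)]=0$ for every $h$, then choosing $h=\sum_{i=1}^{k}\lambda_{i}\xi_{i}$ and regarding the result as a function of $(\lambda_{1},\dots,\lambda_{k})$ exhibits the Laplace transform of the signed measure $A\mapsto\mathbb{E}[F\,\mathbf{1}_{A}]$ on $\sigma(W(\xi_{1}),\dots,W(\xi_{k}))$; since this transform vanishes identically, it forces $\mathbb{E}[F\mid\sigma(W(\xi_{1}),\dots,W(\xi_{k}))]=0$, and letting the generating family exhaust $\mathcal{F}$ gives $F=0$. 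I would then combine this totality with the expansion $\mathcal{E}(h)=\sum_{n\ge0}\tfrac{1}{n!}I_{n}(h^{\otimes n})$, which follows from the generating-function identity for Hermite polynomials and places each $\mathcal{E}(h)$ inside $\overline{\bigoplus_{n}\mathcal{H}_{n}}$; hence $\overline{\bigoplus_{n}\mathcal{H}_{n}}=L^{2}(\Omega,\mathcal{F},P)$.

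Finally, given $F\in L^{2}(\Omega,\mathcal{F},P)$ I would let $J_{n}$ denote the orthogonal projection onto $\mathcal{H}_{n}$ and define the symmetric kernels $f_{n}\in\mathcal{H}^{\otimes n}$ (symmetry being built into $I_{n}$) through $J_{n}F=I_{n}(f_{n})$. Completeness yields $F=\sum_{n\ge0}I_{n}(f_{n})$ with convergence in $L^{2}(\Omega,\mathcal{F},P)$, and the orthogonality together with the isometry of the second step gives $\mathbb{E}|F|^{2}=\sum_{n\ge0}\mathbb{E}|I_{n}(f_{n})|^{2}=\sum_{n\ge0}n!\,\|f_{n}\|_{\mathcal{H}^{\otimes n}}^{2}<\infty$, as claimed. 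The only genuinely delicate point is the totality of the exponentials; everything else is the standard translation of Gaussian Hilbert space theory into the $\mathcal{H}$-geometry induced by $\phi$.
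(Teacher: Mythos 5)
Your argument is correct in substance, but note that the paper does not prove this statement at all: it is quoted verbatim from the literature (Theorem 3.9.9 of \cite{BHOZ-06}, Theorem 6.9 of \cite{DHP-00}, Theorem 3.22 of \cite{HO-03}), so there is no in-paper proof to compare against. What you propose is the intrinsic Gaussian Hilbert space route: treat $\{W(h)\}_{h\in\mathcal{H}}$ as an isonormal process for the $\phi$-inner product, build the chaoses from Hermite polynomials, prove totality of the stochastic exponentials, and project. The cited sources instead obtain the expansion by transfer: they realize $B^{H}$ from an underlying standard Brownian motion (or white noise) via an integral operator and pull back the classical Wiener--It\^{o} decomposition, which is why their kernels come out automatically as genuine symmetric Borel functions and why the iterated-integral representation $I_{n}(f_{n})=n!\int_{t_{1}<\dots<t_{n}}f_{n}\,dB^{H}(t_{1})\cdots dB^{H}(t_{n})$ used in the paper makes literal sense. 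Your approach buys generality and avoids the transfer operator, but it has one delicate point you should flag: for $H>1/2$ the space $\mathcal{H}$ (and a fortiori the completed symmetric tensor powers in which your projection argument naturally lands) contains distributions, whereas the statement defines $\mathcal{H}^{\otimes n}$ as a class of symmetric Borel \emph{functions} with finite $\phi$-weighted norm. To match the statement you must either argue separately that each projection $J_{n}F$ is represented by a function kernel (e.g.\ by density of elementary function kernels and the fact that the references' construction produces such kernels), or accept the kernels as elements of the abstract completion. Everything else --- the isometry $\mathbb{E}[I_{n}(f_{n})I_{m}(g_{m})]=\delta_{nm}\,n!\,\langle f_{n},g_{m}\rangle$, the totality of $\mathcal{E}(h)$ via analytic continuation of the Laplace transform, and the Parseval identity --- is sound and standard.
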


Let $\{\mathcal{F}_{t}\}_{t\ge0}$ be the filtration generated by $B^{H}$. We now recall the definition of quasi-conditional expectation (see \cite{HO-03} and \cite{HP-09}):
\begin{definition}
If $F\in L^{2}(\Omega,\mathcal{F},P)$, then the quasi-conditional expectation is defined as
\begin{equation}\label{quasi conditional expectation}
\hat{E}[F|\mathcal{F}_{t}]=\sum\limits_{n=0}\limits^{\infty}I_{n}(f_{n}I_{[0,t]}^{\otimes n}),~t\in[0,T],
\end{equation}
if the series on the right side converges in $L^{2}(\Omega,\mathcal{F},P)$. Here
\[
I_{[0,t]}^{\otimes n}(t_{1},\ldots,t_{n})=I_{[0,t]}(t_{1})\ldots I_{[0,t]}(t_{n}).
\]
\end{definition}
\begin{remark}\label{remark for quasi conditional expectation}
$\hat{E}\left[\hat{E}[F|\mathcal{F}_{t}]\big|\mathcal{F}_{s}\right]=\hat{E}[F|\mathcal{F}_{s}]$, for $0\leq s\leq t\leq T$.
\end{remark}
\begin{lemma}[Lemma 3.3 \cite{HP-09}]\label{Lemma 3.3 HU PENG}
For all $h\in \mathbb{L}^{1,2}_{H}$ and all $t\in[0,T]$, P-a.s.
\[
\hat{E}\left[{\displaystyle\int_{t}^{T}}h(u)\delta B^{H}(u)\Big| \mathcal{F}_{t}\right]=0.
\]
\end{lemma}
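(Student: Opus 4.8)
The plan is to compute the Wiener chaos expansion of the random variable $\int_{t}^{T}f(u)\delta B^{H}(u)$ and then read off its quasi-conditional expectation directly from the definition (\ref{quasi conditional expectation}). Since $f\in\mathbb{L}^{1,2}_{H}$, Theorem \ref{Th.1_Hu 2005} guarantees that the divergence integral $\delta(fI_{[t,T]})=\int_{t}^{T}f(u)\delta B^{H}(u)$ exists, is square integrable, and coincides with the It\^{o}--Skorohod integral; in particular it admits a chaos expansion by Theorem \ref{chaos expansion}. The idea is to express that expansion through the kernels of $f$ itself and to observe that the defining cut-off $I_{[0,t]}^{\otimes n}$ of the quasi-conditional expectation annihilates every term.

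First I would write, for a.e.\ fixed $u$, the chaos expansion $f(u)=\sum_{n\ge 0}I_{n}(f_{n}(\cdot,u))$, where $f_{n}(\cdot,u)\in\mathcal{H}^{\otimes n}$. Using the standard chaos representation of the divergence (Skorohod) integral --- namely that $\delta$ raises the order of each chaos by one and replaces the kernel by its symmetrization --- I obtain
\[
\int_{t}^{T}f(u)\delta B^{H}(u)=\sum_{n\ge 0}I_{n+1}\bigl(\widetilde{h_{n}}\bigr),\qquad h_{n}(t_{1},\dots,t_{n},u):=f_{n}(t_{1},\dots,t_{n},u)\,I_{[t,T]}(u),
\]
where $\widetilde{h_{n}}$ denotes the symmetrization of $h_{n}$ over all $n+1$ arguments. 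Because symmetrization is a permutation average, $\widetilde{h_{n}}(s_{1},\dots,s_{n+1})$ is a sum of $n+1$ terms, each carrying a factor $I_{[t,T]}(s_{k})$ in exactly one of its variables $s_{k}$.

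Next I would apply the definition (\ref{quasi conditional expectation}), which multiplies the kernel of the $(n+1)$-th chaos by $I_{[0,t]}^{\otimes(n+1)}$, producing for each $n$ the kernel $\widetilde{h_{n}}(s_{1},\dots,s_{n+1})\prod_{i=1}^{n+1}I_{[0,t]}(s_{i})$. In each of the $n+1$ summands of $\widetilde{h_{n}}$ the constraint $s_{k}\in[t,T]$ is imposed simultaneously with $s_{k}\in[0,t]$, forcing $s_{k}\in[0,t]\cap[t,T]=\{t\}$, a set of Lebesgue measure zero. Hence every such kernel vanishes Lebesgue-a.e.\ and therefore has zero $\mathcal{H}^{\otimes(n+1)}$-norm, so that $I_{n+1}(\widetilde{h_{n}}\,I_{[0,t]}^{\otimes(n+1)})=0$ for every $n$. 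Summing over $n$ yields $\hat{E}\bigl[\int_{t}^{T}f(u)\delta B^{H}(u)\,\big|\,\mathcal{F}_{t}\bigr]=0$.

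The step I expect to be the main obstacle is the rigorous justification of the chaos representation of the divergence integral for a general $f\in\mathbb{L}^{1,2}_{H}$, since elements of $\mathcal{H}$ (and hence the kernels $f_{n}$) may be genuine distributions rather than functions, which makes the pointwise null-set argument delicate. I would handle this by first establishing the identity for simple integrands whose kernels are smooth, compactly supported functions --- where all the manipulations above are literal --- and then passing to the limit using the $L^{2}$-isometry of the divergence integral from Theorem \ref{Th.1_Hu 2005} together with the fact that $\hat{E}[\,\cdot\,|\mathcal{F}_{t}]$ is a contraction on $L^{2}(\Omega,\mathcal{F},P)$ (it diminishes the $\mathcal{H}^{\otimes n}$-norm of each kernel, and by Remark \ref{remark for quasi conditional expectation} is $L^{2}$-continuous), so that both sides converge and the limiting identity, with right-hand side $0$, is preserved.
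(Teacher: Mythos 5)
The paper does not prove this lemma at all: it is quoted verbatim as Lemma 3.3 of Hu--Peng \cite{HP-09}, so there is no in-paper argument to compare against. Your main line of reasoning --- write $\delta(fI_{[t,T]})=\sum_{n}I_{n+1}(\widetilde{h_{n}})$ with $h_{n}=f_{n}\otimes I_{[t,T]}$, observe that every summand of the symmetrization carries a factor $I_{[t,T]}(s_{k})$ in some variable, and that multiplying by $I_{[0,t]}^{\otimes(n+1)}$ forces $s_{k}\in[0,t]\cap[t,T]=\{t\}$ --- is exactly the argument in the cited source, and it is sound. For $f\in\mathbb{L}^{1,2}_{H}$ the kernels are genuine functions (condition (\ref{isometry equation}) already requires $\mathbb{E}\Vert f\Vert_{T}^{2}<\infty$ with the double-integral norm, and $\phi$ is locally integrable since $2H-2>-1$), so a kernel vanishing Lebesgue-a.e.\ does have zero $\mathcal{H}^{\otimes(n+1)}$-norm; the distributional worry is not an obstacle here.

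The genuine flaw is in your proposed repair of the ``main obstacle''. You justify passing to the limit by asserting that $\hat{E}[\,\cdot\,|\mathcal{F}_{t}]$ is an $L^{2}$-contraction because multiplication of a kernel by $I_{[0,t]}^{\otimes n}$ diminishes its $\mathcal{H}^{\otimes n}$-norm. This is false for the $\phi$-weighted inner product: since $\phi>0$ off the diagonal, restriction can \emph{increase} the norm. Already for $n=1$, taking $\xi=I_{[0,t]}-\varepsilon I_{(t,T]}$ gives $\Vert\xi\Vert_{T}^{2}=\Vert I_{[0,t]}\Vert_{T}^{2}-2\varepsilon\langle I_{[0,t]},I_{(t,T]}\rangle_{T}+\varepsilon^{2}\Vert I_{(t,T]}\Vert_{T}^{2}<\Vert\xi I_{[0,t]}\Vert_{T}^{2}$ for small $\varepsilon>0$, because $\langle I_{[0,t]},I_{(t,T]}\rangle_{T}>0$. (The inequality $\mathbb{E}|\hat{E}[g(\eta(T))|\mathcal{F}_{t}]|^{2}\le\mathbb{E}|g(\eta(T))|^{2}$ used elsewhere in the paper comes from the explicit heat-semigroup representation (\ref{quasi linear conditional expectation}) and Jensen, not from a general contraction property.) The fix is to drop the approximation step altogether: the chaos representation $\delta(u)=\sum_{n}I_{n+1}(\widetilde{f_{n}})$ is valid for every $u\in Dom(\delta)$ whose kernels satisfy $\sum_{n}(n+1)!\,\Vert\widetilde{f_{n}}\Vert_{\mathcal{H}^{\otimes(n+1)}}^{2}<\infty$ (the fractional analogue of Proposition 1.3.7 of \cite{N-95}), so your null-set computation applies directly to $fI_{[t,T]}\in\mathbb{L}^{1,2}_{H}\subset Dom(\delta)$ without any smoothing, and no continuity of $\hat{E}[\,\cdot\,|\mathcal{F}_{t}]$ is ever needed.
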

The following lemma is inspired by Theorem 3.9 \cite{HP-09}.
\begin{lemma}\label{Lemma for expectation and quasi conditional expectation}
Let $F=h(\eta(T))$, where $h:\mathbb{R}\rightarrow\mathbb{R}$ is a continuous function of polynomial growth. Then $F\in L^{2}(\Omega,\mathcal{F},P)$ and
\[
\mathbb{E}\left[\hat{E}[F|\mathcal{F}_{t}]\right]=\mathbb{E}F,~t\in[0,T].
\]
\end{lemma}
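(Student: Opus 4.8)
The plan is to prove the two assertions in turn: the square-integrability of $F$, and then the identity $\mathbb{E}[\hat{E}[F|\mathcal{F}_t]]=\mathbb{E}F$.

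First I would establish $F\in L^{2}(\Omega,\mathcal{F},P)$. Since $f$ has polynomial growth, there exist $C>0$ and $k\ge1$ with $|f(x)|^{2}\le C(1+|x|^{2k})$, whence $\mathbb{E}|F|^{2}\le C(1+\mathbb{E}|\eta(T)|^{2k})$. By the moment estimate (\ref{Lp estimates}) obtained in the proof of Lemma \ref{property of VT}, one has $\mathbb{E}|\eta(T)|^{2k}\le C_{2k}+C_{2k}\Vert\sigma\Vert_{T}^{k}<\infty$, so $F\in L^{2}(\Omega,\mathcal{F},P)$. This square-integrability is exactly what makes the chaos expansion and the quasi-conditional expectation of $F$ meaningful.

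For the identity I would rely on the chaos decomposition. By Theorem \ref{chaos expansion} write $F=\sum_{n=0}^{\infty}I_{n}(f_{n})$ with $f_{n}\in\mathcal{H}^{\otimes n}$, the series converging in $L^{2}(\Omega,\mathcal{F},P)$. The elementary fact I would invoke is that the iterated integrals are centered in positive order, i.e. $\mathbb{E}[I_{n}(f_{n})]=0$ for $n\ge1$ while $\mathbb{E}[I_{0}(f_{0})]=f_{0}$; this is the orthogonality of distinct chaoses underlying the Parseval identity in Theorem \ref{chaos expansion}, seen by pairing $I_{n}(f_{n})$ with $I_{0}(1)=1$. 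It follows at once that $\mathbb{E}F=f_{0}$. Next, by the definition (\ref{quasi conditional expectation}) of the quasi-conditional expectation, $\hat{E}[F|\mathcal{F}_{t}]=\sum_{n=0}^{\infty}I_{n}(f_{n}I_{[0,t]}^{\otimes n})$, a series converging in $L^{2}(\Omega,\mathcal{F},P)$ by Remark \ref{remark for quasi conditional expectation}. Because $G\mapsto\mathbb{E}G$ is continuous on $L^{2}(\Omega,\mathcal{F},P)$ (by Cauchy--Schwarz, $|\mathbb{E}G|\le\Vert G\Vert_{L^{2}}$), I may pass the expectation through the sum term by term. Each summand with $n\ge1$ is again centered, $\mathbb{E}[I_{n}(f_{n}I_{[0,t]}^{\otimes n})]=0$, and the $n=0$ term equals $f_{0}$ since $I_{[0,t]}^{\otimes 0}=1$. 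Hence $\mathbb{E}[\hat{E}[F|\mathcal{F}_{t}]]=f_{0}=\mathbb{E}F$, which is the claim.

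A shorter variant, which I would mention, avoids expanding $\hat{E}[F|\mathcal{F}_{t}]$ explicitly: since $I_{[0,0]}^{\otimes n}$ is the indicator of a Lebesgue-null set for $n\ge1$, all positive-order terms in the definition of $\hat{E}[\,\cdot\,|\mathcal{F}_{0}]$ vanish, so $\hat{E}[G|\mathcal{F}_{0}]=\mathbb{E}G$ for every $G\in L^{2}$; then the tower property of Remark \ref{remark for quasi conditional expectation} with $s=0$ gives $\mathbb{E}[\hat{E}[F|\mathcal{F}_{t}]]=\hat{E}[\hat{E}[F|\mathcal{F}_{t}]\,|\,\mathcal{F}_{0}]=\hat{E}[F|\mathcal{F}_{0}]=\mathbb{E}F$. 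The one genuine point requiring care --- and the step I would write out most carefully --- is the interchange of $\mathbb{E}$ with the infinite chaos sum, which is justified by the $L^{2}$-convergence from Theorem \ref{chaos expansion} and Remark \ref{remark for quasi conditional expectation} together with the $L^{2}$-continuity of expectation, combined with the centering $\mathbb{E}[I_{n}(\cdot)]=0$ for $n\ge1$; everything else is routine.
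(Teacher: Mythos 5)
Your proposal is correct, but it proves the identity by a genuinely different route than the paper. You work entirely at the level of the chaos decomposition: writing $F=\sum_{n\ge0}I_{n}(f_{n})$, observing that every chaos of order $n\ge1$ is centered (by orthogonality with $I_{0}(1)=1$, or equivalently by the zero-mean property of the divergence integral in Theorem \ref{Th.1_Hu 2005}), and noting that multiplying the kernels by $I_{[0,t]}^{\otimes n}$ preserves both the order of the chaos and hence the centering, so that both $\mathbb{E}F$ and $\mathbb{E}[\hat{E}[F|\mathcal{F}_{t}]]$ equal $f_{0}$; the interchange of $\mathbb{E}$ with the sum is legitimate since $L^{2}$-convergence (guaranteed by Theorem \ref{chaos expansion} and Remark \ref{remark for quasi conditional expectation}) implies $L^{1}$-convergence. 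Your tower-property variant via $\hat{E}[\cdot|\mathcal{F}_{0}]=\mathbb{E}[\cdot]$ is also sound. The paper instead proves the identity analytically: it applies the It\^{o} formula (\ref{deterministic Ito's formula}) to the heat-semigroup process $P_{\|\sigma\|_{T}^{2}-\|\sigma\|_{s}^{2}}f(\eta(s))$ to obtain $\mathbb{E}f(\eta(T))=\mathbb{E}\{P_{\|\sigma\|_{T}^{2}-\|\sigma\|_{t}^{2}}f(\eta(t))\}$, and separately establishes the representation $\hat{E}[F|\mathcal{F}_{t}]=P_{\|\sigma\|_{T}^{2}-\|\sigma\|_{t}^{2}}f(\eta(t))$, from which the claim follows by comparison. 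Your argument is shorter, more elementary, and strictly more general --- it proves $\mathbb{E}[\hat{E}[F|\mathcal{F}_{t}]]=\mathbb{E}F$ for every $F\in L^{2}(\Omega,\mathcal{F},P)$, with the structural hypothesis $F=f(\eta(T))$ used only to verify square-integrability. What the paper's longer route buys is the explicit formula (\ref{quasi linear conditional expectation}), which is not just an intermediate step here but is reused crucially later (e.g.\ in the proof of Proposition \ref{Proposition 4.5_4.3_HP-09}); your proof does not deliver that representation, but it is not needed for the statement of the lemma itself.
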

\begin{proof}
First, from the polynomial growth of $f$ and
\[
\mathbb{E}|\eta(T)|^{p}\leq C\mathbb{E}\left(1+\left|{\displaystyle\int_{0}^{T}}\sigma(v)dB^{H}(v)\right|^{p}\right)
\leq C_{p}+C_{p}\|\sigma\|_{T}^{p/2}\leq M_{p},~ p\ge1,
\]
we obtain $F\in L^{2}(\Omega,\mathcal{F},P)$.

We now put $p_{t}(x)=\dfrac{1}{\sqrt{2\pi t}}e^{-\frac{x^{2}}{2t}}$, $t\in(0,T],x\in\mathbb{R}$, and
\[
P_{t}h(x)={\displaystyle\int_{\mathbb{R}}}p_{t}(x-y)h(y)dy.
\]
Applying (\ref{deterministic Ito's formula}) to $P_{\|\sigma\|_{T}^{2}-\|\sigma\|_{t}^{2}}h(\eta(t))$ and noticing that
$\dfrac{\partial}{\partial t}P_{t}h(x)=\dfrac{1}{2}\dfrac{\partial^{2}}{\partial x^{2}}P_{t}h(x)$, we have
\begin{equation}\label{Ito formula equation}
\begin{array}
[c]{l}
h(\eta(T))=P_{\|\sigma\|_{T}^{2}-\|\sigma\|_{t}^{2}}h(\eta(t))
+{\displaystyle\int_{t}^{T}}{\displaystyle\dfrac{\partial}{\partial x}}P_{\|\sigma\|_{T}^{2}-\|\sigma\|_{s}^{2}}h(\eta(s))\sigma(s)\delta B^{H}(s),
\end{array}
\end{equation}
and hence,
\begin{equation}\label{equation 2}
\mathbb{E}h(\eta(T))=\mathbb{E}\left\{P_{\|\sigma\|_{T}^{2}-\|\sigma\|_{t}^{2}}h(\eta(t))\right\}.
\end{equation}
On the other hand, from the proof of Theorem 3.8 \cite{HP-09}, it follows that
\begin{equation}\label{quasi linear conditional expectation}
\hat{E}[F|\mathcal{F}_{t}]=P_{\|\sigma\|_{T}^{2}-\|\sigma\|_{t}^{2}}h(\eta(t)).
\end{equation}

For the reader's convenience, we give a justification for (\ref{quasi linear conditional expectation}) here.
By taking $t=0$ in (\ref{Ito formula equation}) we obtain
\[
\begin{array}
[c]{l}
h(\eta(T))=P_{\|\sigma\|_{T}^{2}}h(\eta(0))+{\displaystyle\int_{0}^{T}}\dfrac{\partial}{\partial x}P_{\|\sigma\|_{T}^{2}-\|\sigma\|_{s}^{2}}h(\eta(s))\sigma(s)\delta B^{H}(s).
\end{array}
\]
Thus, due to Lemma \ref{Lemma 3.3 HU PENG} and Remark 4.10 \cite{HO-03},
\begin{equation}\label{equation 20}
\hat{E}\left[F|\mathcal{F}_{t}\right]=P_{\|\sigma\|_{T}^{2}}h(\eta(0))+{\displaystyle\int_{0}^{t}}\dfrac{\partial}{\partial x}P_{\|\sigma\|_{T}^{2}-\|\sigma\|_{s}^{2}}h(\eta(s))\sigma(s)\delta B^{H}(s).
\end{equation}
On the other hand, by applying (\ref{deterministic Ito's formula}) to $P_{\|\sigma\|_{t}^{2}-\|\sigma\|_{s}^{2}}h(\eta(s))$ over time interval $[0,t]$, we get
\[
\begin{array}
[c]{l}
h(\eta(t))=P_{\|\sigma\|_{t}^{2}}h(\eta(0))+{\displaystyle\int_{0}^{t}}\dfrac{\partial}{\partial x}P_{\|\sigma\|_{t}^{2}-\|\sigma\|_{s}^{2}}h(\eta(s))\sigma(s)\delta B^{H}(s).
\end{array}
\]
Thus, from semigroup property
\[
P_{\|\sigma\|_{T}^{2}-\|\sigma\|_{s}^{2}}h(x)=P_{\|\sigma\|_{T}^{2}-\|\sigma\|_{t}^{2}}P_{\|\sigma\|_{t}^{2}-\|\sigma\|_{s}^{2}}h(x),
~0\leq s\leq t\leq T
\]
and
\[
\dfrac{\partial}{\partial x}P_{\|\sigma\|_{T}^{2}-\|\sigma\|_{s}^{2}}h(x)=P_{\|\sigma\|_{T}^{2}-\|\sigma\|_{t}^{2}}\dfrac{\partial}{\partial x}P_{\|\sigma\|_{t}^{2}-\|\sigma\|_{s}^{2}}h(x),
\]
then, we get form (\ref{equation 20}) that
\[
\begin{array}
[c]{l}
\hat{E}\left[F|\mathcal{F}_{t}\right]=P_{\|\sigma\|_{T}^{2}}h(\eta(0))+{\displaystyle\int_{0}^{t}}\dfrac{\partial}{\partial x}P_{\|\sigma\|_{T}^{2}-\|\sigma\|_{s}^{2}}h(\eta(s))\sigma(s)\delta B^{H}(s)\medskip\\
=P_{\|\sigma\|_{T}^{2}-\|\sigma\|_{t}^{2}}\left[P_{\|\sigma\|_{t}^{2}}h(\eta(0))+{\displaystyle\int_{0}^{t}}\dfrac{\partial}{\partial x}P_{\|\sigma\|_{t}^{2}-\|\sigma\|_{s}^{2}}h(\eta(s))\sigma(s)\delta B^{H}(s)\right]\medskip\\
=P_{\|\sigma\|_{T}^{2}-\|\sigma\|_{t}^{2}}h(\eta(t)).
\end{array}
\]
This means that we have proved (\ref{quasi linear conditional expectation}). Consequently, from (\ref{equation 2}) and (\ref{quasi linear conditional expectation}), we have
\[
\mathbb{E}\left\{\hat{E}[F|\mathcal{F}_{t}]\right\}=\mathbb{E}\left\{P_{\|\sigma\|_{T}^{2}-\|\sigma\|_{t}^{2}}h(\eta(t))\right\}=\mathbb{E}F.
\]
\hfill
\end{proof}

\section{BSDEs Driven By $B^{H}$}
The objective of this section is to study the BSDE
\begin{equation}  \label{BSDE}
\left\{
\begin{array}
[c]{l}%
-dY(t)= f(t,\eta(t),Y(t),Z(t))dt-Z(t)\delta B^{H}\left(
t\right)  ,\quad t\in[0,T],\medskip\\
Y(T)=\xi.
\end{array}
\right.
\end{equation}
We now give the definition of the solution for the above BSDE.
\begin{definition}
A pair $(Y,Z)$ is called a solution of BSDE (\ref{BSDE}), if the following
conditions are satisfied:
\[
\begin{array}
[c]{rl}
(a_{1}) & Y\in\mathcal{\bar{V}}_{T}^{1/2}\;\text{ and } Z\in\mathcal{\bar{V}}_{T}^{H} ~(\text{Recall } (\ref{def V^alpha}));\medskip\\
(a_{2}) & Y(t)=\xi+{\displaystyle\int_{t}^{T}}f\left(  s,\eta(s),Y(s),Z(s)\right)ds
-{\displaystyle\int_{t}^{T}}Z(s)\delta B^{H}(s),~a.s., ~t\in(0,T].
\end{array}
\]
\end{definition}
Let us begin by discussing the existence of a solution for BSDE (\ref{BSDE}).
\subsection{Existence}
We begin with considering the following equation:
\begin{equation}\label{fBSDE}
Y(t)=\xi+{\displaystyle\int_{t}^{T}}f\left(  s,\eta(s),\chi(s,\eta(s)),\psi(s,\eta(s))\right)ds
-{\displaystyle\int_{t}^{T}}Z(s)\delta B^{H}(s),~t\in[0,T],
\end{equation}
where $\chi,\psi\in C_{pol}^{1,3}\left(
[0,T]\times\mathbb{R}\right)$ with $\dfrac{\partial \chi}{\partial t},\dfrac{\partial \psi}{\partial t}\in C_{pol}^{0,1}([0,T]\times\mathbb{R})$. Observe that (\ref{fBSDE}) is a special case of BSDE (\ref{BSDE}).

In Proposition 4.5 \cite{HP-09}, the existence problem of a solution for an equation of type (\ref{fBSDE}) was not explicitly specified. Therefore, we shall give the following proposition:
\begin{proposition}\label{Proposition 4.5_4.3_HP-09}
Under the assumptions $(H_{1})$-$(H_{4})$, BSDE (\ref{fBSDE}) has a unique solution $(Y,Z)\in \mathcal{V}_{T}\times \mathcal{V}_{T}$. This solution has the form
\[
\begin{array}
[c]{l}
(i)  Y(t)=u(t,\eta(t)), \quad Z(t)=v(t,\eta(t)),\medskip\\
(ii)  v(t,x)=\sigma(t)\dfrac{\partial }{\partial x}u(t,x).
\end{array}
\]
where $u,v\in C_{pol}^{1,3}([0,T]\times\mathbb{R})$ with $\dfrac{\partial u}{\partial t},\dfrac{\partial v}{\partial t}\in C_{pol}^{0,1}([0,T]\times\mathbb{R})$.
\end{proposition}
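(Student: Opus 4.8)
The plan is to reduce the backward equation to a terminal-value problem for a deterministic PDE, solve that PDE, and then verify that the candidate pair $(Y,Z)$ built from the PDE solution is indeed a solution of (\ref{fBSDE}) using the It\^{o} formula (\ref{deterministic Ito's formula}) and the quasi-conditional expectation machinery. Concretely, since the driver in (\ref{fBSDE}) no longer depends on the unknowns $(Y,Z)$ but only on the given data through $h(s,x):=f(s,x,\chi(s,x),\psi(s,x))$, which lies in $C^{0,1}_{pol}$ by $(H_3)$ and the regularity of $\chi,\psi$, the equation is genuinely \emph{linear}. I would first guess that the solution has the Markovian form $Y(t)=u(t,\eta(t))$ and determine $u$ by taking quasi-conditional expectations.

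\textbf{Deriving the PDE and its solution.} Applying $\hat{E}[\,\cdot\,|\mathcal{F}_t]$ to (\ref{fBSDE}), using Lemma \ref{Lemma 3.3 HU PENG} to kill the divergence integral and the representation (\ref{quasi linear conditional expectation}) of the quasi-conditional expectation via the heat semigroup $P_\tau$, I would obtain
\[
u(t,\eta(t))=P_{\|\sigma\|_T^2-\|\sigma\|_t^2}\,g(\eta(t))+\int_t^T P_{\|\sigma\|_s^2-\|\sigma\|_t^2}\,h\bigl(s,\eta(t)\bigr)\,ds,
\]
which suggests defining
\[
u(t,x)=P_{\|\sigma\|_T^2-\|\sigma\|_t^2}\,g(x)+\int_t^T P_{\|\sigma\|_s^2-\|\sigma\|_t^2}\,h(s,x)\,ds.
\]
Because $P_\tau$ is the heat semigroup and $\frac{d}{dt}\|\sigma\|_t^2=2\sigma(t)\hat\sigma(t)>0$ is smooth and strictly positive on $(0,T]$ by Remark \ref{remark for sigma}(a), the function $u$ solves the terminal-value parabolic PDE
\[
\frac{\partial u}{\partial t}(t,x)+\sigma(t)\hat\sigma(t)\,\frac{\partial^2 u}{\partial x^2}(t,x)+h(t,x)=0,\qquad u(T,x)=g(x).
\]
I then set $v(t,x)=\sigma(t)\,\frac{\partial u}{\partial x}(t,x)$, giving part $(ii)$ directly. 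The smoothing property of $P_\tau$ together with the polynomial growth and regularity of $g$ and $h$ yields $u,v\in C^{1,3}_{pol}$ with $\partial_t u,\partial_t v\in C^{0,1}_{pol}$, so that $Y=u(\cdot,\eta(\cdot))$ and $Z=v(\cdot,\eta(\cdot))$ lie in $\mathcal{V}_T$.

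\textbf{Verification and uniqueness.} To confirm $(Y,Z)$ solves (\ref{fBSDE}), I would apply the It\^{o} formula (\ref{deterministic Ito's formula}) to $u(t,\eta(t))$ along $\eta(t)=\eta(0)+\int_0^t\sigma\,\delta B^H$; the drift term collapses by the PDE, the $\frac12\,\partial_{xx}u\,\frac{d}{dt}\|\sigma\|_t^2$ term matches $\sigma\hat\sigma\,\partial_{xx}u$, and the martingale part is exactly $\int\sigma\,\partial_x u\,\delta B^H=\int v(s,\eta(s))\,\delta B^H$, producing $(a_2)$; here Lemma \ref{property of VT} guarantees the integral $\int_t^T Z\,\delta B^H$ genuinely exists in $Dom(\delta)$. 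For uniqueness, given two solutions I would subtract them and apply $\hat E[\cdot|\mathcal F_t]$ to force the drift difference to vanish, then use Lemma \ref{Lemma 4.2_HP-09} to conclude both components agree. \textbf{The main obstacle} I anticipate is the careful justification that $u$ inherits the $C^{1,3}_{pol}$ regularity with $\partial_t u\in C^{0,1}_{pol}$ uniformly up to $t=T$ and near $t=0$, where $\frac{d}{dt}\|\sigma\|_t^2$ is only $O(t^{2H-1})$ by Remark \ref{remark for sigma}(b) and the heat variance $\|\sigma\|_T^2-\|\sigma\|_t^2$ must stay bounded away from $0$ for $t<T$ to exploit $P_\tau$'s smoothing — this is precisely the rigor that was lacking in \cite{HP-09} and that forces the restricted space $\mathcal{V}_T$.
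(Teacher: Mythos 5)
Your proof is correct in its essentials but follows a genuinely different route from the paper's. The paper works ``probability first'': it forms the quasi-martingale $M(t)=\hat{E}\big[g(\eta(T))+\int_0^T h(s,\eta(s))\,ds\,\big|\,\mathcal{F}_t\big]$ with $h(s,x)=f(s,x,\chi(s,x),\psi(s,x))$, computes $D_s^H M(t)$ explicitly, checks $\mathbb{E}|M(t)|^2<\infty$, and invokes the fractional Clark--Ocone formula to produce $Z(s)=\hat{E}[D_s^H M(T)|\mathcal{F}_s]$ directly, so that the existence of the divergence integral $\int_t^T Z(s)\,\delta B^H(s)$ comes for free from that representation; the relation $v=\sigma\,\partial_x u$ is then deduced \emph{a posteriori} by applying the It\^{o} formula to $u(t,\eta(t))$ and identifying coefficients via Lemma \ref{Lemma 4.2_HP-09}. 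You instead work ``PDE first'': you write down the terminal-value problem $\partial_t u+\sigma\hat{\sigma}\,\partial_{xx}u+h=0$, solve it by the time-changed heat semigroup (arriving at exactly the representation the paper obtains for $Y$), \emph{define} $v:=\sigma\,\partial_x u$, and verify the BSDE by the It\^{o} formula (\ref{deterministic Ito's formula}), with Lemma \ref{property of VT} guaranteeing $Z\in Dom(\delta)$. Your uniqueness argument (apply $\hat{E}[\,\cdot\,|\mathcal{F}_t]$, kill the stochastic integral by Lemma \ref{Lemma 3.3 HU PENG}, then conclude via Lemma \ref{Lemma 4.2_HP-09}) also differs from the paper's, which runs an energy estimate on $|Y-\tilde{Y}|^2$ via Theorem \ref{Ito formula for divergence type integral}; yours is arguably more elementary for this linear equation, but note it needs $\hat{E}[\Delta Y(t)|\mathcal{F}_t]=\Delta Y(t)$ (Remark 4.10 of \cite{HO-03}) and $\Delta Z\in\mathbb{L}^{1,2}_H$, both supplied by membership in $\mathcal{V}_T$. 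What the paper's route buys is that the stochastic integral and its domain are handled structurally by the Clark formula; what yours buys is a shorter verification and relation $(ii)$ by construction rather than by identification. The regularity of $u$ (the $C_{pol}^{1,3}$ claim, delicate near $t=T$ where the semigroup stops smoothing and near $t=0$ where $\frac{d}{dt}\|\sigma\|_t^2\sim t^{2H-1}$) is the one point you flag without resolving; the paper is equally terse there, so this is not a gap specific to your argument.
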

Before giving the proof, we show the following auxiliary result:
\begin{lemma}\label{Fubini theorem}
Assume that $h\in C^{0,1}_{pol}([0,T]\times\mathbb{R})$ and put $h_{s}=h(s,\eta(s))$, $s\in[0,T]$. Then
\[
\hat{E}\left[{\displaystyle\int_{t}^{T}}h_{s}ds\big|\mathcal{F}_{t}\right]
={\displaystyle\int_{t}^{T}}\hat{E}\left[h_{s}\big|\mathcal{F}_{t}\right]ds, ~P-a.s., ~t\in[0,T].
\]
\end{lemma}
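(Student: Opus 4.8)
The plan is to interchange the quasi-conditional expectation $\hat{E}[\cdot\,|\mathcal{F}_t]$ with the deterministic integral $\int_t^T \cdots ds$ by reducing the identity to the chaos-expansion level, where $\hat{E}[\cdot\,|\mathcal{F}_t]$ acts simply as a truncation operator. First I would fix $t\in[0,T]$ and, for each fixed $s\in[t,T]$, invoke Lemma \ref{Lemma for expectation and quasi conditional expectation} (more precisely its representation \eqref{quasi linear conditional expectation}) to identify $\hat{E}[f_s|\mathcal{F}_t]$ with $P_{\|\sigma\|_s^2-\|\sigma\|_t^2}f(\cdot)$ evaluated at $\eta(t)$; note that here $f_s=f(s,\eta(s))$ with $f(s,\cdot)$ continuous of polynomial growth, so that lemma applies to each slice $x\mapsto f(s,x)$. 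This already shows that the right-hand integrand $s\mapsto\hat{E}[f_s|\mathcal{F}_t]$ is a well-defined, jointly measurable, $\mathcal{F}_t$-adapted process, and the polynomial-growth bound \eqref{Lp estimates} guarantees the $ds$-integral on the right converges in $L^2(\Omega,\mathcal{F},P)$.

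The core step is to establish the exchange at the level of $L^2(\Omega)$. I would write the chaos expansion $f_s=\sum_{n\ge0} I_n(g_n(s,\cdot))$ of each $f_s$ from Theorem \ref{chaos expansion}, so that by the definition \eqref{quasi conditional expectation} of the quasi-conditional expectation we have $\hat{E}[f_s|\mathcal{F}_t]=\sum_{n\ge0} I_n\!\left(g_n(s,\cdot)\,I_{[0,t]}^{\otimes n}\right)$. Because $I_{[0,t]}^{\otimes n}$ is a fixed deterministic multiplier independent of $s$, the map $G\mapsto\hat{E}[G|\mathcal{F}_t]$ is a bounded linear (indeed contractive) operator on $L^2(\Omega,\mathcal{F},P)$; this is precisely the content of Remark \ref{remark for quasi conditional expectation} together with the Parseval identity in Theorem \ref{chaos expansion}, since truncating the kernels can only decrease the $\mathcal{H}^{\otimes n}$-norms. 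The claim then reduces to the fact that a bounded linear operator on $L^2(\Omega)$ commutes with a Bochner integral: for the $L^2(\Omega)$-valued map $s\mapsto f_s$, which is continuous on $[t,T]$ by continuity of $f$ and dominated convergence via \eqref{Lp estimates}, the Riemann sums $\sum_i f_{s_i}\Delta s_i$ converge in $L^2(\Omega)$ to $\int_t^T f_s\,ds$, and applying the contraction $\hat{E}[\cdot\,|\mathcal{F}_t]$ to both sides and passing to the limit yields
\[
\hat{E}\!\left[{\displaystyle\int_t^T} f_s\,ds\,\Big|\mathcal{F}_t\right]
={\displaystyle\int_t^T}\hat{E}\!\left[f_s\,\big|\mathcal{F}_t\right]ds,
\]
the continuity of $\hat{E}[\cdot\,|\mathcal{F}_t]$ permitting the interchange of limit and operator.

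I anticipate that the main obstacle is justifying the measurability and integrability needed to treat $s\mapsto f_s$ as a genuine $L^2(\Omega)$-valued (Bochner-)integrable map, rather than the algebraic interchange itself, which is immediate once the operator is seen to be bounded and linear on each Wiener chaos. Specifically, one must check that $s\mapsto f_s$ is strongly measurable and that $\int_t^T \|f_s\|_{L^2(\Omega)}\,ds<\infty$; both follow from the joint continuity of $f$ in $(s,x)$, the polynomial growth assumption $f\in C^{0,1}_{pol}$, and the uniform-in-$s$ moment bound \eqref{Lp estimates} on $\eta(s)$ over the compact interval $[0,T]$. Once these integrability facts are in place, the exchange of $\hat{E}[\cdot\,|\mathcal{F}_t]$ and the integral is a routine consequence of the boundedness of the quasi-conditional-expectation operator on $L^2(\Omega,\mathcal{F},P)$.
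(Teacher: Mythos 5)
The central step of your argument --- commuting $\hat{E}[\,\cdot\,|\mathcal{F}_t]$ with the Bochner integral on the grounds that it is ``a bounded linear (indeed contractive) operator on $L^{2}(\Omega,\mathcal{F},P)$'' --- rests on the claim that multiplying the chaos kernels by $I_{[0,t]}^{\otimes n}$ can only decrease their $\mathcal{H}^{\otimes n}$-norms. That claim is false for $H>1/2$. The inner product $\langle\xi,\zeta\rangle_{T}=\int_{0}^{T}\int_{0}^{T}\phi(u-v)\xi(u)\zeta(v)\,du\,dv$ is non-local, so truncation is not an orthogonal projection: writing $\xi=\xi I_{[0,t]}+\xi I_{(t,T]}$, the cross term $2\langle \xi I_{[0,t]},\xi I_{(t,T]}\rangle_{T}$ can be negative and dominate $\Vert \xi I_{(t,T]}\Vert_{T}^{2}$. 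Concretely, take $T=2$, $t=1$ and $\xi=I_{[0,1]}-\varepsilon I_{[1,2]}$: then $\Vert\xi\Vert_{T}^{2}=1+\varepsilon^{2}-2\varepsilon\left(2^{2H-1}-1\right)<1=\Vert\xi I_{[0,1]}\Vert_{T}^{2}$ for small $\varepsilon>0$, since $2^{2H-1}-1>0$ when $H>1/2$. Hence the quasi-conditional expectation is not a contraction on $L^{2}(\Omega,\mathcal{F},P)$, and neither Theorem \ref{chaos expansion} nor Remark \ref{remark for quasi conditional expectation} gives its boundedness there (they only give convergence of the truncated series and the tower property). Without boundedness, applying $\hat{E}[\,\cdot\,|\mathcal{F}_t]$ to the Riemann sums and passing to the $L^{2}$-limit is not justified, and this is exactly the step your proof needs. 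The inequality $\mathbb{E}\vert\hat E[h(\eta(s))|\mathcal{F}_t]\vert^{2}\le\mathbb{E}\vert h(\eta(s))\vert^{2}$ used elsewhere in the paper is special to random variables of the single form $h(\eta(s))$ (it comes from the heat-kernel representation (\ref{quasi linear conditional expectation}) together with Jensen's inequality) and does not transfer to the finite linear combinations $\sum_i f_{s_i}\Delta s_i$ in the way your argument requires.

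The paper circumvents this obstacle by never invoking operator boundedness on $L^{2}$: it identifies the chaos kernels of $\int_t^T f_s\,ds$ as $g_n=\int_t^T f_{n,s}\,ds$ by testing $I_n(g_n)$ against an arbitrary $F\in L^{2}(\Omega,\mathcal{F},P)$ and applying the stochastic Fubini theorem, then exchanges $I_n$ with the $ds$-integral chaos by chaos before resumming. If you wish to retain your Bochner-integral strategy, you would still have to establish the kernel identity $g_n=\int_t^T f_{n,s}\,ds$ first, at which point you have essentially reproduced the paper's argument; the remaining parts of your proposal (measurability, the moment bounds from (\ref{Lp estimates}), and the representation of $\hat E[f_s|\mathcal{F}_t]$ via the heat semigroup) are sound but do not close this gap.
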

\begin{proof}
Since $h\in C^{0,1}_{pol}([0,T]\times\mathbb{R})$, we know that $h_{s}\in L^{2}(\Omega,\mathcal{F},P)$, for all $s\in[0,T]$.
From Theorem \ref{chaos expansion} there exist $h_{n,s}\in \mathcal{H}^{\otimes n}$, $n\ge0$ such that $h_{s}=\sum\limits_{n=0}\limits^{\infty}I_{n}(h_{n,s})$, with $I_{0}(h_{0,s})=\mathbb{E}h_{s}$.
Recall that the series converges in $L^{2}(\Omega,\mathcal{F},P)$.
From the proof of Theorem 3.9.9 \cite{BHOZ-06} we deduce that $h_{n,s}$ is measurable w.r.t. $s$,  for $n\ge0$.
Similarly, for $G:={\displaystyle\int_{t}^{T}}h_{s}ds\in L^{2}(\Omega,\mathcal{F},P)$ there exist $G_{n}\in \mathcal{H}^{\otimes n}$, $n\ge0$, such that, $G=\sum\limits_{n=0}\limits^{\infty}I_{n}(G_{n})$ with $I_{0}(G_{0})=\mathbb{E}G$.
Let us show that we can choose $G_{n}={\displaystyle\int_{t}^{T}} h_{n,s}ds$, $n\ge0$. For this, we observe that
\[
\mathbb{E}{\displaystyle\int_{t}^{T}}\left|\sum\limits_{n=0}\limits^{N}I_{n}(h_{n,s})\right|^{2}ds
=\mathbb{E}{\displaystyle\int_{t}^{T}}\sum\limits_{n=0}\limits^{N}\left|I_{n}(h_{n,s})\right|^{2}ds
\leq \mathbb{E}{\displaystyle\int_{t}^{T}}\sum\limits_{n=0}\limits^{\infty}\left|I_{n}(h_{n,s})\right|^{2}ds
=\mathbb{E}{\displaystyle\int_{t}^{T}}|h_{s}|^{2}ds<\infty,
\]
and hence $I_{n}(h_{n,s})$ is square integrable w.r.t. $s$.

Now, for arbitrary $F\in L^{2}(\Omega,\mathcal{F},P)$ with the chaos expansion $F=\sum\limits_{n=0}\limits^{\infty}I_{n}(l_{n})$, we have from Fubini's Theorem
\[
\begin{array}
[c]{l}
\mathbb{E}\left[F\cdot I_{n}(G_{n})\right]=\mathbb{E}\left[I_{n}(l_{n})\cdot I_{n}(G_{n})\right]
=\mathbb{E}\left[I_{n}(l_{n})\cdot {\displaystyle\int_{t}^{T}}h_{s}ds\right]
={\displaystyle\int_{t}^{T}}\mathbb{E}\left[I_{n}(l_{n})\cdot h_{s}\right]ds\medskip\\
={\displaystyle\int_{t}^{T}}\mathbb{E}\left[I_{n}(l_{n})\cdot I_{n}(h_{n,s})\right]ds
={\displaystyle\int_{t}^{T}}\mathbb{E}\left[F\cdot I_{n}(h_{n,s})\right]ds
=\mathbb{E}\left[F\cdot{\displaystyle\int_{t}^{T}} I_{n}(h_{n,s})ds\right].
\end{array}
\]
It follows that $I_{n}(G_{n})={\displaystyle\int_{t}^{T}} I_{n}(h_{n,s})ds, ~n\ge0$,
and the stochastic Fubini Theorem (see Theorem 1.13.1 \cite{M-07}) yields
$I_{n}(G_{n})={\displaystyle\int_{t}^{T}} I_{n}(h_{n,s})ds=I_{n}\left({\displaystyle\int_{t}^{T}} h_{n,s}ds\right)$.
Thus, we can indeed choose  $G_{n}={\displaystyle\int_{t}^{T}} h_{n,s}ds$, $n\ge0$
and $I_{[0,t]}^{\otimes n}G_{n}=I_{[0,t]}^{\otimes n}{\displaystyle\int_{t}^{T}} h_{n,s}ds$.
Consequently, we have
\begin{equation*}\label{change order}
\lim\limits_{N\rightarrow\infty}\sum\limits_{n=0}\limits^{N}I_{n}({\displaystyle\int_{t}^{T}} h_{n,s}ds)=\lim\limits_{N\rightarrow\infty}\sum\limits_{n=0}\limits^{N}I_{n}(G_{n})=G
={\displaystyle\int_{t}^{T}} h_{s}ds,~\text{ in } L^{2}(\Omega,\mathcal{F},P).
\end{equation*}
Now we are going to show that
$\hat{E}\left[{\displaystyle\int_{t}^{T}}h_{s}ds\big|\mathcal{F}_{t}\right]={\displaystyle\int_{t}^{T}}\hat{E}\left[h_{s}\big|\mathcal{F}_{t}\right]ds$.
In fact
\[
\begin{array}
[c]{l}
\hat{E}\left[{\displaystyle\int_{t}^{T}}h_{s}ds\big|\mathcal{F}_{t}\right]
=\hat{E}\left[\sum\limits_{n=0}\limits^{\infty}I_{n}(G_{n})\big|\mathcal{F}_{t}\right]
=\sum\limits_{n=0}\limits^{\infty}I_{n}(I^{\otimes n}_{[0,t]}G_{n})\medskip\\
\qquad\qquad\qquad
=\sum\limits_{n=0}\limits^{\infty}I_{n}(I^{\otimes n}_{[0,t]}{\displaystyle\int_{t}^{T}} h_{n,s}ds)
=\sum\limits_{n=0}\limits^{\infty}I_{n}({\displaystyle\int_{t}^{T}} I^{\otimes n}_{[0,t]}h_{n,s}ds)=\sum\limits_{n=0}\limits^{\infty}{\displaystyle\int_{t}^{T}} I_{n}(I^{\otimes n}_{[0,t]}h_{n,s})ds,
\end{array}
\]
where, for the later equality in the above equation, we have used the stochastic Fubini Theorem.
Moreover, from (\ref{quasi linear conditional expectation}), we know that for $s\ge t$, $\hat{E}\left[h_{s}|\mathcal{F}_{t}\right]$ exists and
$\hat{E}\left[h_{s}|\mathcal{F}_{t}\right]= \sum\limits_{n=0}\limits^{\infty}I_{n}(I^{\otimes n}_{[0,t]}h_{n,s})$. Then from $h\in C^{0,1}_{pol}([0,T]\times\mathbb{R})$, Theorem 3.9 of \cite{HP-09} and  dominate convergence theorem, we have
\[
\sum\limits_{n=0}\limits^{\infty}{\displaystyle\int_{t}^{T}} I_{n}(I^{\otimes n}_{[0,t]}h_{n,s})ds
={\displaystyle\int_{t}^{T}} \sum\limits_{n=0}\limits^{\infty}I_{n}(I^{\otimes n}_{[0,t]}h_{n,s})ds
={\displaystyle\int_{t}^{T}} \hat{  E}\left[h_{s}|\mathcal{F}_{t}\right]ds, ~P-a.s.
\]
which completes our proof.
\hfill
\end{proof}
\medskip

After the above auxiliary result, we can now prove our Proposition \ref{Proposition 4.5_4.3_HP-09}.
\medskip

\begin{proof}[Proof of Proposition \ref{Proposition 4.5_4.3_HP-09}]
Using similar arguments to those of the proof of Lemma \ref{property of VT}, we get
\[
g(\eta(T))+{\displaystyle\int_{0}^{T}}f(s,\eta(s),\chi(s,\eta
(s)),\psi(s,\eta(s)))ds\in L^{2}(\Omega,\mathcal{F},P).
\]
Then, we define
\[
M(t)=\hat{E}\left[g(\eta(T))+{\displaystyle\int_{0}^{T}}f(s,\eta(s),\chi(s,\eta
(s)),\psi(s,\eta(s)))ds\big|\mathcal{F}_{t}\right], t\in[0,T].
\]
Using (\ref{quasi linear conditional expectation}) and Lemma \ref{Fubini theorem}, we obtain
\[
\begin{array}
[c]{l}
M(t)=P_{\|\sigma\|_{T}^{2}-\|\sigma\|_{t}^{2}}g(\eta(t))
+{\displaystyle\int_{t}^{T}}P_{\|\sigma\|_{u}^{2}-\|\sigma\|_{t}^{2}}f(u,\eta(t),\chi(u,\eta(t)),\psi(u,\eta(t)))du\medskip\\
\qquad\qquad\qquad\qquad\qquad+{\displaystyle\int_{0}^{t}}f(u,\eta(u),\chi(u,\eta(u)),\psi(u,\eta(u)))du.
\end{array}
\]
Moreover, $\hat{E}[M(t)|\mathcal{F}_{s}]=M(s)$, $0\leq s\leq t\leq T$ (see Remark \ref{remark for quasi conditional expectation}).
Recall the definition of the Malliavin derivative, it follows that if $F\in \mathbb{D}_{1,2}$ is $\mathcal{F}_{t}$-measurable, then $D_{s}^{H}F=0$, $ds$-a.e. on $[t,T]$. Therefore, for $s\in[0,t]$
\begin{equation*}\label{Malliavin derivative of M}
D_{s}^{H}M(t)=\sigma_{s}P_{\|\sigma\|_{T}^{2}-\|\sigma\|_{t}^{2}}g^{\prime}(\eta(t))
+\sigma_{s}{\displaystyle\int_{t}^{T}}P_{\|\sigma\|_{u}^{2}-\|\sigma\|_{t}^{2}}\tilde{g}(u,\eta(t))du
+\sigma_{s}{\displaystyle\int_{s}^{t}}\tilde{g}(u,\eta(u))du
\end{equation*}
where
\[
\begin{array}
[c]{l}
\tilde{g}(u,x)=\dfrac{\partial}{\partial x}f(u,x,\chi(u,x),\psi(u,x))+\dfrac{\partial}{\partial y}f(u,x,\chi(u,x),\psi(u,x))\chi_{x}(u,x)\medskip\\
\qquad\qquad\qquad+\dfrac{\partial}{\partial z}f(u,x,\chi(u,x),\psi(u,x))\psi_{x}(u,x).
\end{array}
\]
Moreover, (\ref{quasi linear conditional expectation}) and the semigroup property of $P_{u}$ yields
\begin{equation}\label{equation 3}
\begin{array}
[c]{l}
\hat{E}[D_{s}^{H}M(t)|\mathcal{F}_{s}]=\sigma_{s}P_{\|\sigma\|_{T}^{2}-\|\sigma\|_{t}^{2}}P_{\|\sigma\|_{t}^{2}-\|\sigma\|_{s}^{2}}g^{\prime}(\eta(s))
\medskip\\
\qquad\quad+\sigma_{s}{\displaystyle\int_{t}^{T}}P_{\|\sigma\|_{u}^{2}-\|\sigma\|_{t}^{2}}P_{\|\sigma\|_{t}^{2}-\|\sigma\|_{s}^{2}}\tilde{g}(u,\eta(s))du
+\sigma_{s}{\displaystyle\int_{s}^{t}}P_{\|\sigma\|_{u}^{2}-\|\sigma\|_{s}^{2}}\tilde{g}(u,\eta(s))du\medskip\\
\qquad=\sigma_{s}P_{\|\sigma\|_{T}^{2}-\|\sigma\|_{s}^{2}}g^{\prime}(\eta(s))
+\sigma_{s}{\displaystyle\int_{s}^{T}}P_{\|\sigma\|_{u}^{2}-\|\sigma\|_{s}^{2}}\tilde{g}(u,\eta(s))du.\medskip\\
\end{array}
\end{equation}
Now, we are going to prove that $\mathbb{E}|M(t)|^{2}<\infty$. Indeed,
\[
\begin{array}
[c]{l}
\mathbb{E}|M(t)|^{2}\leq
3\mathbb{E}\left|\hat{E}[g(\eta(T))|\mathcal{F}_{t}]\right|^{2}+3\mathbb{E}\left|\hat{E}\left[{\displaystyle\int_{t}^{T}}f(s,\eta(s),\chi(s,\eta
(s)),\psi(s,\eta(s)))ds\big|\mathcal{F}_{t}\right]\right|^{2}\medskip\\
\qquad\qquad\qquad+3\mathbb{E}\left|{\displaystyle\int_{0}^{t}}f(s,\eta(s),\chi(s,\eta
(s)),\psi(s,\eta(s)))ds\right|^{2},
\end{array}
\]
and, similarly to Theorem 3.9 \cite{HP-09}, we obtain
\[
\mathbb{E}\left|\hat{E}[g(\eta(T))|\mathcal{F}_{t}]\right|^{2}\leq \mathbb{E}|g(\eta(T))|^{2}
\]
On the other hand, from Lemma \ref{Fubini theorem}, we have
\[
\begin{array}
[c]{l}
\mathbb{E}\left|\hat{E}\left[{\displaystyle\int_{t}^{T}}f(s,\eta(s),\chi(s,\eta
(s)),\psi(s,\eta(s)))ds\big|\mathcal{F}_{t}\right]\right|^{2}\medskip\\
=\mathbb{E}\left|{\displaystyle\int_{t}^{T}}\hat{E}\left[f(s,\eta(s),\chi(s,\eta
(s)),\psi(s,\eta(s)))\big|\mathcal{F}_{t}\right]ds\right|^{2}\medskip\\
\leq T{\displaystyle\int_{t}^{T}}\mathbb{E}\left|\hat{E}\left[f(s,\eta(s),\chi(s,\eta
(s)),\psi(s,\eta(s)))\big|\mathcal{F}_{t}\right]\right|^{2}ds\medskip\\
\leq T{\displaystyle\int_{t}^{T}}\mathbb{E}|f(s,\eta(s),\chi(s,\eta(s)),\psi(s,\eta(s)))|^{2}ds.
\end{array}
\]
Consequently, $\mathbb{E}|M(t)|^{2}<\infty$. Then by using fractional Clark formula (see \cite{H-05} and \cite{HO-03}) we get
\begin{equation}\label{fractional Clark formula}
\begin{array}
[c]{l}
M(t)=\mathbb{E}M(t)+{\displaystyle\int_{0}^{t}}\hat{E}[D_{s}^{H}M(t)|\mathcal{F}_{s}]\delta B^{H}(s).
\end{array}
\end{equation}
From Lemmas \ref{Lemma for expectation and quasi conditional expectation} and \ref{Fubini theorem} we have $\mathbb{E}\left\{\hat{E}[g(\eta(T))|\mathcal{F}_{t}]\right\}=\mathbb{E}g(\eta(T))$ and
\[
\begin{array}
[c]{l}
\mathbb{E}\left\{\hat{E}\left[{\displaystyle\int_{t}^{T}}f(s,\eta(s),\chi(s,\eta(s)),\psi(s,\eta(s)))ds\big|\mathcal{F}_{t}\right]\right\}\medskip\\
={\displaystyle\int_{t}^{T}}\mathbb{E}\left\{\hat{E}\left[f(s,\eta(s),\chi(s,\eta(s)),\psi(s,\eta(s)))\big|\mathcal{F}_{t}\right]\right\}ds\medskip\\
=\mathbb{E}{\displaystyle\int_{t}^{T}}f(s,\eta(s),\chi(s,\eta(s)),\psi(s,\eta(s)))ds.
\end{array}
\]
Consequently, from the definition of $M(t)$ and Remark 4.10 \cite{HO-03}, we have
\[
M(T)=g(\eta(T))+{\displaystyle\int_{0}^{T}}f(s,\eta(s),\chi(s,\eta(s)),\psi(s,\eta(s)))ds
\]
and
\begin{equation}\label{equation 4}
\begin{array}
[c]{l}
\mathbb{E}M(t)=\mathbb{E}\left\{\hat{E}[g(\eta(T))|\mathcal{F}_{t}]\right\}
+\mathbb{E}{\displaystyle\int_{0}^{t}}f(s,\eta(s),\chi(s,\eta(s)),\psi(s,\eta(s)))ds\medskip\\
\qquad\qquad\qquad\qquad
+\mathbb{E}\left\{\hat{E}\left[{\displaystyle\int_{t}^{T}}f(s,\eta(s),\chi(s,\eta(s)),\psi(s,\eta(s)))ds\big|\mathcal{F}_{t}\right]\right\}
\medskip\\
=\mathbb{E}g(\eta(T))+\mathbb{E}{\displaystyle\int_{0}^{T}}f(s,\eta(s),\chi(s,\eta(s)),\psi(s,\eta(s)))ds
=\mathbb{E}M(T).
\end{array}
\end{equation}
On the other hand, from (\ref{equation 3}) we obtain
\begin{equation}\label{equation 5}
\hat{E}[D_{s}^{H}M(t)|\mathcal{F}_{s}]=\hat{E}[D_{s}^{H}M(T)|\mathcal{F}_{s}],~P-a.s.,~s\in[0,t].
\end{equation}
We deduce from (\ref{fractional Clark formula}), (\ref{equation 4}) and (\ref{equation 5}) that
\[
M(t)=\mathbb{E}M(T)+{\displaystyle\int_{0}^{t}}\hat{E}[D_{s}^{H}M(T)|\mathcal{F}_{s}]\delta B^{H}(s).
\]
Let us now introduce the process $Z(s)=\hat{E}[D_{s}^{H}M(T)|\mathcal{F}_{s}],~s\in[0,T]$. Similarly as Proposition 4.5 \cite{HP-09}, using the property of the operator $P_{\|\sigma\|_{T}^{2}-\|\sigma\|_{s}^{2}}$, we can prove that $Z\in \mathcal{V}_{T}$. Moreover, in virtue of the latter relation, we have
\[
M(t)=\mathbb{E}M(T)+{\displaystyle\int_{0}^{t}}Z(s)\delta B^{H}(s),~P-a.s.~t\in[0,T].
\]
Now, we define
\[
Y(t)=M(t)-{\displaystyle\int_{0}^{t}}f(s,\eta(s),\chi(s,\eta
(s)),\psi(s,\eta(s)))ds, ~  t\in[0,T].
\]
Then,
\[
\begin{array}
[c]{l}
Y(T)-Y(t)=M(T)-M(t)-{\displaystyle\int_{t}^{T}}f(s,\eta(s),\chi(s,\eta(s)),\psi(s,\eta(s)))ds\medskip\\
={\displaystyle\int_{t}^{T}}Z(s)dB^{H}(s)-{\displaystyle\int_{t}^{T}}f(s,\eta(s),\chi(s,\eta(s)),\psi(s,\eta(s)))ds,
\end{array}
\]
Moreover, it's not hard to check that $Y(T)=M(T)-{\displaystyle\int_{0}^{T}}f(s,\eta(s),\chi(s,\eta(s)),\psi(s,\eta(s)))ds=g(\eta(T))=\xi$, so we have
\[
Y(t)=\xi+{\displaystyle\int_{t}^{T}}f\left(  s,\eta(s),\chi(s,\eta
(s)),\psi(s,\eta(s))\right)  ds-{\displaystyle\int_{t}^{T}}Z(s)\delta B^{H}\left(
s\right).
\]
Moreover, from Remark 4.10 \cite{HO-03} and (\ref{quasi linear conditional expectation})
\[
\begin{array}
[c]{l}
Y(t)=\hat{E}[Y(t)|\mathcal{F}_{t}]=\hat{E}\left[g(\eta(T))+{\displaystyle\int_{t}^{T}}f\left(  s,\eta(s),\chi(s,\eta
(s)),\psi(s,\eta(s))\right)  ds\Big|\mathcal{F}_{t}\right]\medskip\\
=P_{\|\sigma\|_{T}^{2}-\|\sigma\|_{t}^{2}}g(\eta(t))+{\displaystyle\int_{t}^{T}}P_{\|\sigma\|_{s}^{2}-\|\sigma\|_{t}^{2}}f\left(  s,\eta(t),\chi(s,\eta
(t)),\psi(s,\eta(t))\right)  ds,
\end{array}
\]
so that it can be easily shown that also $Y\in \mathcal{V}_{T}$. Consequently, we have constructed a solution $(Y,Z)\in \mathcal{V}_{T}\times\mathcal{V}_{T}$
for BSDE (\ref{fBSDE}). Moreover, $Y$ is continuous since $Y\in\mathcal{V}_{T}$.

Finally, using that $Y,Z\in\mathcal{V}_{T}$, we can find $u,v\in C_{pol}^{1,3}([0,T]\times\mathbb{R})$ with $\dfrac{\partial u}{\partial t},\dfrac{\partial v}{\partial t}\in C_{pol}^{0,1}([0,T]\times\mathbb{R})$ such that $Y(t)=u(t,\eta(t)), Z(t)=v(t,\eta(t))$, $t\in[0,T]$. Then $ v(t,x)=\sigma(t)\dfrac{\partial}{\partial x}u(t,x)$. Indeed, by applying
(\ref{deterministic Ito's formula}) we have
\[
\begin{array}
[c]{l}
du(t,\eta(t))={\displaystyle\dfrac{\partial}{\partial t}}u(t,\eta(t))dt+\sigma(t){\displaystyle\dfrac{\partial}{\partial x}}u(t,\eta
(t))\delta B^{H}\left(  t\right) +\dfrac{1}{2}\tilde{\sigma}(t){\displaystyle\dfrac{\partial^{2}}{\partial x^{2}}}u(t,\eta(t))dt\medskip\\
 =\left[ {\displaystyle\dfrac{\partial}{\partial t}}u(t,\eta(t))+\dfrac
{1}{2}\tilde{\sigma}(t){\displaystyle\dfrac{\partial^{2}}{\partial x^{2}}}u(t,\eta(t))\right]  dt+\sigma
(t){\displaystyle\dfrac{\partial}{\partial x}}u(t,\eta(t))\delta B^{H}\left(  t\right),
\end{array}
\]
where $\tilde{\sigma}(t):=\dfrac{d}{dt}(\Vert\sigma\Vert_{t}^{2})$. Consequently
\[
u(t,\eta(t)) =\xi-\int_{t}^{T}\left[\dfrac{\partial}{\partial s}u(s,\eta(s))
+\dfrac{1}{2}\tilde{\sigma}(s)\dfrac{\partial^{2}}{\partial x^{2}}u(s,\eta(s))\right] ds
-\int_{t}^{T}\sigma(s)\dfrac{\partial}{\partial x}u(s,\eta(s))\delta B^{H}\left(  s\right).
\]
From (\ref{fBSDE}) it can be concluded that
\[
\begin{array}
[c]{l}
{\displaystyle\int_{t}^{T}}\left[{\displaystyle\dfrac{\partial}{\partial s}}u(s,\eta(s))
+\dfrac{1}{2}\tilde{\sigma}(s){\displaystyle\dfrac{\partial^{2}}{\partial x^{2}}}u(s,\eta(s))\right]ds
+{\displaystyle\int_{t}^{T}}\sigma(s){\displaystyle\dfrac{\partial}{\partial x}}u(s,\eta(s))\delta B^{H}\left(  s\right)\medskip\\
=-{\displaystyle\int_{t}^{T}}f(s,\eta(s),\chi(s,\eta(s)),\psi(s,\eta(s)))ds+{\displaystyle\int_{t}^{T}}v(s,\eta(s)) \delta B^{H}\left( s\right).
\end{array}
\]
Using Lemma \ref{Lemma 4.2_HP-09} we deduce that
\begin{equation}\label{relation between u and v}
v(t,x)=\sigma(t)\dfrac{\partial}{\partial x}u(t,x),\text{ for all } t\in[0,T], ~x\in\mathbb{R}.
\end{equation}
It remains to prove that the above solution is the unique one in $\mathcal{V}_{T}\times\mathcal{V}_{T}$ for BSDE (\ref{fBSDE}). Indeed, we suppose that there is another solution $(\tilde{Y},\tilde{Z})\in\mathcal{V}_{T}\times\mathcal{V}_{T}$.
Then, by applying Theorem \ref{Ito formula for divergence type integral}, using (\ref{relation between u and v}) (which yields $\mathbb{D}_{t}^{H}Y(t)=\dfrac{\hat{\sigma}(t)}{\sigma(t)}Z(t)$) and taking expectation, we have
\[
\begin{array}
[c]{l}%
\mathbb{E}|Y(t)-\tilde{Y}(t)|^{2}+\dfrac{2}{M}{\displaystyle\int_{t}^{T}}s^{2H-1}\mathbb{E}|Z(s)-\tilde{Z}(s)|^{2}ds\medskip\\
\leq\mathbb{E}|Y(t)-\tilde{Y}(t)|^{2}+2{\displaystyle\int_{t}^{T}}\dfrac{\hat{\sigma}(s)}{\sigma(s)}\mathbb{E}|Z(s)-\tilde{Z}(s)|^{2}ds\medskip\\
=2\mathbb{E}{\displaystyle\int_{t}^{T}}\left(Y(t)-\tilde{Y}(t) \right)\left(Z(t)-\tilde{Z}(t)\right) \delta B^{H}(s)=0,~\text{ for all }t\in [0,T],
\end{array}
\]
where the latter equality follows the fact that $(Y-\tilde{Y})(Z-\tilde{Z})\in\mathcal{V}_{T}$. Therefore, taking into account the continuity of $Y-\tilde{Y}$, the uniqueness follows.\hfill
\end{proof}
\begin{proposition}\label{estimate for main result Hu&Peng}
Let the assumptions $(H_{1})$-$(H_{4})$ be satisfied. For $(U,V)\in\mathcal{V}_{T}\times\mathcal{V}_{T}$,
let $(Y,Z)\in\mathcal{V}_{T}\times\mathcal{V}_{T}$ be the unique solution of the following BSDE
\[
Y(t)=\int_{t}^{T}f\left(  s,\eta(s),U(s),V(s)\right)  ds-\int_{t}^{T}Z(s)\delta B^{H}(s), ~t\in[0,T] .
\]
Then, for all $\beta>0$, there exists $C\left(  \beta\right)\in\mathbb{R}$ (depending also on $L$ and
$T$) such that
\begin{equation}\label{a priori estimate}
\begin{array}
[c]{l}
\sup\limits_{t\in[0,T]}e^{2\beta t}\mathbb{E}\left\vert Y(t)\right\vert^{2}
+{\displaystyle\int_{0}^{T}}e^{2\beta s}\mathbb{E}\left\vert Y(s)\right\vert^{2}ds
+{\displaystyle\int_{0}^{T}}s^{2H-1}e^{2\beta s}\mathbb{E}|Z\left(  s\right)  |^{2}ds\medskip\\
\leq C\left(  \beta\right)  \left(  {\displaystyle\int_{0}^{T}
}e^{2\beta s}\mathbb{E}|U(s)|^{2}ds+{\displaystyle\int_{0}^{T}}s^{2H-1}e^{2\beta
s}\mathbb{E}|V(s)|^{2}ds+{\displaystyle\int_{0}^{T}}e^{2\beta s}\left\vert
f\left(  s,\eta(s),0,0\right)  \right\vert ^{2}ds\right).
\end{array}
\end{equation}
Moreover, $C(\beta)$ can be chosen such that $\lim\limits_{\beta\rightarrow\infty}C\left(  \beta\right)=0$.
\end{proposition}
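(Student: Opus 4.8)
The plan is to run a weighted energy estimate driven by the It\^{o} formula. Since $(Y,Z)$ is the solution produced by Proposition~\ref{Proposition 4.5_4.3_HP-09} (here with terminal value $0$, so $Y(T)=0$), we have $Y(t)=u(t,\eta(t))$ and $Z(t)=\sigma(t)\partial_{x}u(t,\eta(t))$. Writing $f_{s}:=f(s,\eta(s),U(s),V(s))$ and applying Theorem~\ref{Ito formula for divergence type integral} to $Y^{2}$ against the smooth deterministic factor $e^{2\beta t}$, while using the Malliavin chain rule $\mathbb{D}^{H}_{s}Y(s)=\hat{\sigma}(s)\partial_{x}u(s,\eta(s))=\frac{\hat{\sigma}(s)}{\sigma(s)}Z(s)$ (exactly the identity behind the $\frac{\hat{\sigma}}{\sigma}$ term in the uniqueness part of Proposition~\ref{Proposition 4.5_4.3_HP-09}), I would obtain, after taking expectations and discarding the divergence integral by its zero mean (Theorem~\ref{Th.1_Hu 2005}),
\[
\mathbb{E}[e^{2\beta t}Y(t)^{2}]+2\beta\int_{t}^{T}e^{2\beta s}\mathbb{E}|Y(s)|^{2}ds+2\int_{t}^{T}e^{2\beta s}\frac{\hat{\sigma}(s)}{\sigma(s)}\mathbb{E}|Z(s)|^{2}ds=2\int_{t}^{T}e^{2\beta s}\mathbb{E}[Y(s)f_{s}]ds.
\]
By Remark~\ref{remark for sigma}(b) the third term on the left dominates $\frac{2}{M}\int_{t}^{T}s^{2H-1}e^{2\beta s}\mathbb{E}|Z(s)|^{2}ds$, which is the quantity I wish to retain.

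Next I would bound the right-hand side. By $(H_{3})$, $|f_{s}|\le|f(s,\eta(s),0,0)|+L(|U(s)|+|V(s)|)$, so $2\mathbb{E}[Yf_{s}]$ splits into three contributions. The $f(\cdot,\eta,0,0)$- and $U$-pieces are controlled by the plain Young inequality $2|Y|w\le\lambda|Y|^{2}+\lambda^{-1}w^{2}$ with $\lambda$ a fixed fraction of $\beta$: their $|Y|^{2}$-parts are absorbed into $2\beta\int e^{2\beta s}\mathbb{E}|Y|^{2}$ and their remainders inherit the favourable factor $\lambda^{-1}=O(\beta^{-1})$. The troublesome piece is the $V$-piece: to reproduce the weight $s^{2H-1}$ required on the right of the claim, I must instead use the weighted Young inequality
\[
2L|Y||V|\le\frac{L^{2}}{\delta}\,s^{-(2H-1)}|Y|^{2}+\delta\,s^{2H-1}|V|^{2},
\]
which yields the correctly weighted $V$-term, with coefficient $\delta$, but leaves the singular remainder $\frac{L^{2}}{\delta}\int_{t}^{T}s^{-(2H-1)}e^{2\beta s}\mathbb{E}|Y|^{2}ds$.

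Controlling this singular remainder is the main obstacle, and it is intrinsic to the fractional regime (in the Brownian case $2H-1=0$ and the weight disappears). The idea is to split the time integral at a $\beta$-dependent point $s_{*}\sim 1/\beta$ and to take $\delta=K\beta^{2H-2}$ with $K=K(L,T,H)$ a large fixed constant, so that $\delta\to 0$. On $[s_{*},T]$ one has $s^{-(2H-1)}\le s_{*}^{-(2H-1)}\sim\beta^{2H-1}$, whence $\frac{L^{2}}{\delta}s^{-(2H-1)}=O(\beta)$ and this part is absorbed into $2\beta\int e^{2\beta s}\mathbb{E}|Y|^{2}$ (with room to spare). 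On $[0,s_{*}]$ I bound $e^{2\beta s}\mathbb{E}|Y(s)|^{2}$ by $S:=\sup_{t\in[0,T]}e^{2\beta t}\mathbb{E}|Y(t)|^{2}$ and use $\int_{0}^{s_{*}}s^{-(2H-1)}ds=\frac{s_{*}^{2-2H}}{2-2H}<\infty$ (as $2H-1<1$); the choice of $\delta$ makes this contribution equal to $cS$ with $c<1$ provided $K$ is large enough. Since the identity at a generic $t$, after dropping its nonnegative integrals, already bounds $e^{2\beta t}\mathbb{E}|Y(t)|^{2}$ by the same right-hand side, taking the supremum over $t$ and absorbing $cS$ pins down $S$, and the two remaining left-hand integrals then follow from the identity at $t=0$.

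Collecting the coefficients gives $C(\beta)=O(\beta^{-1})+O(\beta^{2H-2})$, and both terms vanish as $\beta\to\infty$ (the second because $H<1$), which is the final assertion. The one point demanding genuine care is the coupling between $\delta$ and $s_{*}$ in the previous step: a crude use of the supremum over all of $[0,T]$ would force $\delta$ to stay bounded away from $0$, ruining $\lim_{\beta\to\infty}C(\beta)=0$, so the $\beta$-dependent split near the origin, balancing the integrable singularity $s^{-(2H-1)}$ against the $\beta$-weight, is what makes the estimate close.
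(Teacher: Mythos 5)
Your argument is correct, and it reaches the estimate by a genuinely different mechanism than the paper, although both start from the same It\^{o} identity (Theorem \ref{Ito formula for divergence type integral} applied to $Y^{2}$ with $Y(T)=0$, the relation $\mathbb{D}^{H}_{s}Y(s)=\frac{\hat{\sigma}(s)}{\sigma(s)}Z(s)$, the zero mean of the divergence integral, and the coercivity $\frac{\hat{\sigma}(s)}{\sigma(s)}\ge \frac{1}{M}s^{2H-1}$ from (\ref{properties for sigma})). The paper runs a two-pass scheme: it first bounds $x(t)=[\mathbb{E}|Y(t)|^{2}]^{1/2}$ pointwise by $\sqrt{3}\int_{t}^{T}[\mathbb{E}(L^{2}|U|^{2}+L^{2}|V|^{2}+|f(s,\eta(s),0,0)|^{2})]^{1/2}ds$ via Cauchy--Schwarz and the backward Gronwall-type lemma (Corollary 6.61 of \cite{PR-11}), tames the singular weight through the elementary bound $e^{-2\beta(s-t)}\le (2\beta(s-t))^{-\alpha}$ with $0<\alpha<2-2H$ so that $e^{2\beta t}\int_{t}^{T}e^{-2\beta s}s^{1-2H}ds=O(\beta^{-\alpha})$, and only then returns to the weighted identity, plugging the resulting sup-bound into the right-hand side to extract the two integral terms. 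You instead close everything in a single pass: Young's inequality with $\beta$-dependent parameters, absorption of the regular $|Y|^{2}$ remainders into the $2\beta\int e^{2\beta s}\mathbb{E}|Y|^{2}ds$ term, and a $\beta$-dependent time split at $s_{*}\sim 1/\beta$ for the singular remainder $\frac{L^{2}}{\delta}s^{1-2H}|Y|^{2}$, whose near-origin part is dominated by a small multiple of $S=\sup_{t}e^{2\beta t}\mathbb{E}|Y(t)|^{2}$ and absorbed after taking the supremum. Both routes ultimately rest on the local integrability of $s^{1-2H}$ (i.e., $2H-1<1$); yours avoids the external Gronwall lemma and the second pass at the cost of the $\delta$--$s_{*}$ bookkeeping you correctly identify as the delicate point, and it yields $C(\beta)=O(\beta^{-1})+O(\beta^{-(2-2H)})$, essentially the same rate as the paper's $O(\beta^{-\alpha})$, $\alpha<2-2H$.

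Two small points you should make explicit to be fully rigorous: the absorption $S\le cS+D\Rightarrow S\le (1-c)^{-1}D$ needs $S<\infty$ a priori, which holds because $Y\in\mathcal{V}_{T}$ together with (\ref{Lp estimates}) gives $\sup_{t}\mathbb{E}|Y(t)|^{2}<\infty$; and discarding the term $\int_{t}^{T}e^{2\beta s}Y(s)Z(s)\delta B^{H}(s)$ by zero mean requires $e^{2\beta\cdot}YZ\in\mathbb{L}^{1,2}_{H}$, which follows exactly as in the uniqueness step of Proposition \ref{Proposition 4.5_4.3_HP-09} since $Y,Z\in\mathcal{V}_{T}$ and the exponential weight is smooth and deterministic. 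Neither affects the validity of your argument.
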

\begin{proof}
From Proposition \ref{Proposition 4.5_4.3_HP-09}, it is not hard to check that $\mathbb{D}_{t}^{H}Y(t)=\dfrac{\hat{\sigma}(t)}{\sigma(t)}Z(t)$.
Then, from Theorem \ref{Ito formula for divergence type integral}, we deduce that
for $t\in[0,T]$,
\begin{equation*}\label{Ito for contraction}
\begin{array}
[c]{l}
\mathbb{E}|Y(t)|^{2}+\dfrac{2}{M}{\displaystyle\int_{t}^{T}}s^{2H-1}\mathbb{E}|Z(s)|^{2}ds\leq\mathbb{E}|Y(t)|^{2}
+2{\displaystyle\int_{t}^{T}}\dfrac{\hat{\sigma}(s)}{\sigma(s)}\mathbb{E}|Z(s)|^{2}ds\medskip\\
=2{\displaystyle\int_{t}^{T}}\mathbb{E}\Big[Y(s)f\left(s,\eta(s),U(s),V(s)\right)\Big]ds\medskip\\
\quad\leq2{\displaystyle\int_{t}^{T}}\mathbb{E}\left[ \left\vert Y(s)\right\vert
\left(L|U(s)|+L|V(s)|+\left\vert f\left(s,\eta(s),0,0\right)\right\vert\right)\right]ds\medskip\\
\quad\leq2{\displaystyle\int_{t}^{T}}\left[\mathbb{E}\left\vert Y(s)\right\vert^{2}\right]^{1/2}
\left[\mathbb{E}\left(L|U(s)|+L|V(s)|+\left\vert f\left(s,\eta(s),0,0\right)\right\vert \right)^{2}\right]^{1/2}ds.
\end{array}
\end{equation*}
Let $x(t)=\left[\mathbb{E}\left\vert Y(t)\right\vert ^{2}\right]^{1/2}$, $t\in[0,T]$. Then
\begin{equation}
x^{2}\left(  t\right)  \leq2\sqrt{3}{\int_{t}^{T}}x\left(  s\right)  \left[
\mathbb{E}\left(  L^{2}|U(s)|^{2}+L^{2}|V(s)|^{2}+\left\vert f\left(
s,\eta(s),0,0\right)  \right\vert ^{2}\right)  \right]  ^{1/2}ds,\;t\in\left[
0,T\right]  .\label{deterministic ineq}%
\end{equation}
To estimate $x(t)$, we will apply the following inequality:
\begin{lemma}
Let $a,\alpha,\beta:\left[  0,T\right]  \rightarrow\mathbb{R}_{+}$ be three nonnegative Borel functions such that $a$ is decreasing and $\alpha,\beta\in L^{1}_{loc}([0,\infty])$.
If $x:\left[0,T\right]\rightarrow\mathbb{R}_{+}$ is a continuous function such that%
\[
x^{2}\left(  t\right)  \leq a\left(  t\right)  +2\int_{t}^{T}\alpha\left(
s\right)  x\left(  s\right)  ds+2\int_{t}^{T}\beta\left(  s\right)
x^{2}\left(  s\right)  ds,\;t\in\left[  0,T\right]  ,
\]
then
\[
x\left(  t\right)  \leq\sqrt{a\left(  t\right)  }\exp\left(  \int_{t}^{T}
\beta\left(  s\right)  ds\right)  +\int_{t}^{T}\alpha\left(  s\right)
\exp\left(  \int_{t}^{s}\beta\left(  r\right)  dr\right)  ds,\;t\in\left[
0,T\right]  .
\]
\end{lemma}
\begin{remark}
For this lemma the reader is referred to Corollary 6.61 \cite{PR-11} .
\end{remark}
Now from (\ref{deterministic ineq}) and the above lemma, by setting
\[
\begin{array}
[c]{l}
a(t)=0,\quad \beta(s)=0, \medskip\\
\alpha(s)=\sqrt{3}\left[
\mathbb{E}\left(  L^{2}|U(s)|^{2}+L^{2}|V(s)|^{2}+\left\vert f\left(
s,\eta(s),0,0\right)  \right\vert ^{2}\right)\right]^{1/2},~s\in[0,T],
\end{array}
\]
we have
\[
x(t) \leq\sqrt{3}\int_{t}^{T}\left[  \mathbb{E}\left(
L^{2}|U(s)|^{2}+L^{2}|V(s)|^{2}+\left\vert f\left(  s,\eta(s),0,0\right)
\right\vert ^{2}\right)  \right]  ^{1/2}ds,\quad t\in[0,T],
\]
and, hence, for any $\beta>0$,
\begin{equation}\label{ineq Ito formula 1}
\begin{array}
[c]{l}
\left[\mathbb{E}\left\vert Y(t)\right\vert^{2}\right]^{1/2}
\leq\sqrt{3}{\displaystyle\int_{t}^{T}}\left(L\left[\mathbb{E}|U(s)|^{2}\right]^{1/2}+L\left[\mathbb{E}|V(s)|^{2}\right]^{1/2}
+\left[\mathbb{E}\left\vert f\left(s,\eta(s),0,0\right)\right\vert^{2}\right]^{1/2}\right)ds\medskip\\
\leq\sqrt{3}L{\displaystyle\int_{t}^{T}}\bigg(e^{-\beta s}\left[e^{2\beta s}\mathbb{E}|U(s)|^{2}\right]^{1/2}
+\dfrac{e^{-\beta s}}{s^{H-1/2}}\left[s^{2H-1}e^{2\beta s}\mathbb{E}|V(s)|^{2}\right]^{1/2}\bigg)ds\medskip\\
\quad\quad
+\sqrt{3}{\displaystyle\int_{t}^{T}}e^{-\beta s}\left[e^{2\beta s}\mathbb{E}\left\vert f\left(s,\eta(s),0,0\right)\right\vert ^{2}\right]^{1/2}ds\\
\leq\sqrt{3}L\left({\displaystyle\int_{t}^{T}}e^{-2\beta s}ds\right)^{1/2}\left({\displaystyle\int_{t}^{T}}e^{2\beta s}\mathbb{E}|U(s)|^{2}ds\right)^{1/2}\medskip\\
\quad\quad+\sqrt{3}L\left(  {\displaystyle\int_{t}^{T}}\dfrac{e^{-2\beta s}}{s^{2H-1}}ds\right)^{1/2}\left({\displaystyle\int_{t}^{T}}s^{2H-1}
e^{2\beta s}\mathbb{E}| V(s)|^{2}ds\right)^{1/2}\medskip\\
\quad\quad+\sqrt{3}\left(  {\displaystyle\int_{t}^{T}}e^{-2\beta s}ds\right)^{1/2}
\left({\displaystyle\int_{t}^{T}}e^{2\beta s}\mathbb{E}\left\vert f\left(s,\eta(s),0,0\right)\right\vert ^{2}ds\right)^{1/2}.
\end{array}
\end{equation}
Let us use the following notations:
\[
\begin{array}
[c]{l}
A_{t}:=\left(  {\displaystyle\int_{t}^{T}}e^{2\beta s}\mathbb{E}|U(s)|^{2}\right)^{1/2},
\quad B_{t}:=\left(  {\displaystyle\int_{t}^{T}}s^{2H-1}e^{2\beta s}\mathbb{E}| V(s)|^{2}ds\right)^{1/2}\medskip\\
\text{and}\quad C_{t}=\left(  {\displaystyle\int_{t}^{T}}e^{2\beta
s}\mathbb{E}\left\vert f\left(  s,\eta(s),0,0\right)  \right\vert ^{2}\right)
^{1/2},~t\in[0,T].
\end{array}
\]
Since ${\displaystyle\int_{t}^{T}}e^{-2\beta s}ds=\dfrac{1}{2\beta}\left(  e^{-2\beta t}-e^{-2\beta T}\right)$
we have for $\alpha>0$ with $0<\alpha<2-2H<1$ and $\beta>0$,
\[
e^{2\beta t}{\int_{t}^{T}}\dfrac{e^{-2\beta s}}{s^{2H-1}}ds\leq{\int_{t}^{T}}\dfrac{\left(2\beta(s-t)\right)^{-\alpha}}{s^{2H-1}}ds
\leq\dfrac{1}{\left(  2\beta\right)^{\alpha}}{\int_{0}^{T}}\dfrac{1}{s^{\alpha+2H-1}}ds<\infty.
\]
This allows to conclude from (\ref{ineq Ito formula 1}), that
\begin{equation}\label{ineq Ito formula 2}
e^{2\beta t}\mathbb{E}\left\vert Y(t)\right\vert ^{2}\leq\dfrac{9L^{2}}{2\beta}~{A}_{t}^{2}
+\dfrac{9L^{2}}{(2\beta)^{\alpha}}{\int_{t}^{T}}\dfrac{(s-t)^{-\alpha}}{s^{2H-1}}ds~{B}_{t}^{2}+\dfrac{9}{2\beta}~{C}_{t}^{2}.
\end{equation}
Consequently, there exists $C\left(  \beta\right)  $ with $\lim
\limits_{\beta\rightarrow\infty}C\left(  \beta\right)  =0$,  s.t.
\begin{equation}\label{ineq Ito formula 3}
e^{2\beta t}\mathbb{E}\left\vert Y(t)\right\vert^{2}dt
\leq C(\beta)\left( {A}_{t}^{2}+{B}_{t}^{2}+{C}_{t}^{2}\right), ~t\in[0,T].
\end{equation}
Applying the It\^{o} formula to $|Y(t)|^{2}$, taking the expectation $\mathbb{E}|Y(t)|^{2}$ and then determining the function $d\left(e^{2\beta t}\mathbb{E}|Y(t)|^{2}\right)$ and using $\mathbb{D}_{t}^{H}Y(t)=\dfrac{\hat{\sigma}(t)}{\sigma(t)}Z(t)$, the Lipschitz property of $f$ as well as (\ref{ineq Ito formula 2}) we obtain
(Recall (\ref{properties for sigma}) for the definition of $M$)
\[
\begin{array}
[c]{l}
e^{2\beta t}\mathbb{E}|Y(t)|^{2}+2\beta{\displaystyle\int_{t}^{T}}e^{2\beta s}\mathbb{E}|Y(s)|^{2}ds
+\dfrac{2}{M}{\displaystyle\int_{t}^{T}}s^{2H-1}e^{2\beta s}\mathbb{E}|Z(s)|^{2}ds\medskip\\
\quad\leq2{\displaystyle\int_{t}^{T}}e^{2\beta s}\mathbb{E}\left[\left\vert Y(s)\right\vert
\left( L|U(s)|+L|V(s)|+\left\vert f\left(s,\eta(s),0,0\right)\right\vert \right)\right]ds\medskip\\
\quad\leq2L{\displaystyle\int_{t}^{T}}\left[\mathbb{E}\left( e^{2\beta s}|Y(s)|^{2}\right)\right]^{1/2}
\left[\mathbb{E}\left(  e^{2\beta s}|U(s)|^{2}\right)\right]^{1/2}ds\medskip\\
\quad\quad+2L{\displaystyle\int_{t}^{T}}\left[\mathbb{E}\left(\dfrac{e^{2\beta s}}{s^{2H-1}}|Y(s)|^{2}\right)\right]^{1/2}
\left[\mathbb{E}\left(e^{2\beta s}s^{2H-1}|V(s)|^{2}\right)\right]^{1/2}ds\medskip\\
\quad\quad+2{\displaystyle\int_{t}^{T}}\left[\mathbb{E}\left(e^{2\beta s}|Y(s)|^{2}\right)\right]^{1/2}
\left[\mathbb{E}\left(e^{2\beta s}\left\vert f\left(s,\eta(s),0,0\right)\right\vert ^{2}\right)\right]^{1/2}ds\medskip\\
\quad\leq2L{\displaystyle\int_{t}^{T}}\left[C(\beta)\left(A_{s}^{2}+B_{s}^{2}+C_{s}^{2}\right)\right]^{1/2}
\left[\mathbb{E}\left(e^{2\beta s}|U(s)|^{2}\right)\right]^{1/2}ds\medskip\\
\quad\quad+2L{\displaystyle\int_{t}^{T}}\left[\dfrac{1}{s^{2H-1}}C(\beta)\left(A_{s}^{2}+B_{s}^{2}+C_{s}^{2}\right)\right]^{1/2}
\left[\mathbb{E}\left(  e^{2\beta s}s^{2H-1}|V(s)|^{2}\right)\right]^{1/2}ds\medskip\\
\quad\quad+2{\displaystyle\int_{t}^{T}}\left[C(\beta)\left(A_{s}^{2}+B_{s}^{2}+C_{s}^{2}\right)\right]^{1/2}
\left[\mathbb{E}\left(e^{2\beta s}\left\vert f\left(s,\eta(s),0,0\right)\right\vert^{2}\right)\right]^{1/2}ds\medskip\\
\quad\leq2L\sqrt{C(\beta)}\left(A_{t}+B_{t}+C_{t}\right)\left(\sqrt{T-t}~{A}_{t}+\sqrt{\dfrac{T^{2-2H}-t^{2-2H}}{2-2H}}~B_{t}+\sqrt{T-t}~{C}_{t}\right).
\end{array}
\]
Thus, the above inequality and (\ref{ineq Ito formula 3}) allow to conclude inequality (\ref{a priori estimate}).\hfill
\end{proof}
\begin{theorem}\label{main result Hu&Peng}
Let the assumptions $(H_{1})$-$(H_{4})$ be satisfied. Then the BSDE
\[
Y(t)=\xi+\int_{t}^{T}f\left(  s,\eta(s),Y(s),Z(s)\right) ds-\int_{t}^{T}Z(s)\delta B^{H}(s),~t\in[0,T]
\]
has a solution $(Y,Z)\in\mathcal{\bar{V}}_{T}^{1/2}\times
\mathcal{\bar{V}}_{T}^{H}$.
\end{theorem}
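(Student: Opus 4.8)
The plan is to build the solution by a Picard contraction based on the a priori estimate of Proposition \ref{estimate for main result Hu&Peng}. Given $(U,V)\in\mathcal{V}_T\times\mathcal{V}_T$, Proposition \ref{Proposition 4.5_4.3_HP-09} furnishes a unique pair $(Y,Z)\in\mathcal{V}_T\times\mathcal{V}_T$ solving the frozen equation $Y(t)=\xi+\int_t^T f(s,\eta(s),U(s),V(s))ds-\int_t^T Z(s)\delta B^H(s)$; denote this map $\Gamma(U,V)=(Y,Z)$. On the product space I would use, for a parameter $\beta>0$, the weighted norm
\[
\|(Y,Z)\|_\beta^2:=\int_0^T e^{2\beta s}\mathbb{E}|Y(s)|^2\,ds+\int_0^T s^{2H-1}e^{2\beta s}\mathbb{E}|Z(s)|^2\,ds .
\]
Since $1\le e^{2\beta s}\le e^{2\beta T}$ on $[0,T]$, this norm is equivalent to the one defining $\mathcal{\bar V}_T^{1/2}\times\mathcal{\bar V}_T^{H}$ in (\ref{def V^alpha}), so the completion of $\mathcal{V}_T\times\mathcal{V}_T$ under $\|\cdot\|_\beta$ is exactly $\mathcal{\bar V}_T^{1/2}\times\mathcal{\bar V}_T^{H}$.

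For the contraction, take $(U_1,V_1),(U_2,V_2)\in\mathcal{V}_T\times\mathcal{V}_T$ and set $(Y_i,Z_i)=\Gamma(U_i,V_i)$. The difference $(\bar Y,\bar Z):=(Y_1-Y_2,Z_1-Z_2)\in\mathcal{V}_T\times\mathcal{V}_T$ solves $\bar Y(t)=\int_t^T[f(s,\eta(s),U_1,V_1)-f(s,\eta(s),U_2,V_2)]ds-\int_t^T\bar Z\,\delta B^H(s)$. The proof of Proposition \ref{estimate for main result Hu&Peng} uses only the It\^o formula of Theorem \ref{Ito formula for divergence type integral} together with a pointwise bound on the driver, so it applies verbatim to this difference equation, with the free term $f(\cdot,0,0)$ replaced by $0$ and with $|f(s,\eta(s),U_1,V_1)-f(s,\eta(s),U_2,V_2)|\le L(|\bar U|+|\bar V|)$ supplied by $(H_3)$. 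This yields $\|(\bar Y,\bar Z)\|_\beta^2\le C(\beta)\,\|(\bar U,\bar V)\|_\beta^2$ with the same $C(\beta)\to 0$ as $\beta\to\infty$. Fixing $\beta$ large enough that $C(\beta)<1$, the map $\Gamma$ is a strict contraction for $\|\cdot\|_\beta$, so any Picard sequence $(Y_n,Z_n):=\Gamma(Y_{n-1},Z_{n-1})$ started at an arbitrary point of $\mathcal{V}_T\times\mathcal{V}_T$ is Cauchy and converges to a limit $(Y,Z)\in\mathcal{\bar V}_T^{1/2}\times\mathcal{\bar V}_T^{H}$.

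It remains to check that this limit is genuinely a solution in the sense of $(a_2)$; this is the step that, as noted in the Introduction, was omitted in \cite{HP-09}, and I expect it to be the main obstacle. Each iterate satisfies $Y_n(t)=\xi+\int_t^T f(s,\eta(s),Y_{n-1},Z_{n-1})ds-\int_t^T Z_n\,\delta B^H(s)$, and I would pass to the limit term by term. The Lebesgue integral converges in $L^2(\Omega)$ for every $t\in(0,T]$ by $(H_3)$ and the $\|\cdot\|_\beta$-convergence (the $Z$-contribution being controlled via $\int_t^T\mathbb{E}|Z_{n-1}-Z|^2ds\le t^{-(2H-1)}\int_t^T s^{2H-1}\mathbb{E}|Z_{n-1}-Z|^2ds$), and along a subsequence $Y_n(t)\to Y(t)$ in $L^2(\Omega)$ for a.e. $t$. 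The delicate point is the divergence integral. For fixed $t\in(0,T]$ the weight $s^{2H-1}$ is bounded below by $t^{2H-1}>0$ on $[t,T]$, so $\|\cdot\|_\beta$-convergence forces $Z_nI_{[t,T]}\to ZI_{[t,T]}$ in $L^2(\Omega\times[t,T])$ and hence, via the continuous embedding $L^2([0,T])\subset\mathcal{H}$, in $L^2(\Omega,\mathcal{F},P;\mathcal{H})$. Meanwhile $\delta(Z_nI_{[t,T]})=\int_t^T Z_n\,\delta B^H$ converges in $L^2(\Omega)$ because the other three terms do; passing to the limit in the duality relation (\ref{equation for divergence operator}) that characterises $Dom(\delta)$ then gives $ZI_{[t,T]}\in Dom(\delta)$ with $\int_t^T Z_n\,\delta B^H\to\int_t^T Z\,\delta B^H$. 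This establishes $(a_2)$ for a.e. $t\in(0,T]$; continuity in $t$ of the right-hand side together with $Y(T)=\xi$ upgrades it to all $t\in(0,T]$. The crux throughout is controlling $Z$ near $s=0$, where the weight degenerates, which is precisely why the solution is sought in $\mathcal{\bar V}_T^{H}$ and why the divergence integral is only claimed on intervals $[t,T]$ with $t>0$.
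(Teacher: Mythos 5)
Your proposal follows essentially the same route as the paper: the map $\Gamma$ built from Proposition \ref{Proposition 4.5_4.3_HP-09}, the contraction in the $\beta$-weighted norm via Proposition \ref{estimate for main result Hu&Peng}, and the verification that the Picard limit solves the equation by passing to the limit in the duality relation (\ref{equation for divergence operator}) on $[t,T]$ with $t>0$ — this last step is exactly how the paper handles the point it criticises in \cite{HP-09}. The one place where your justification goes astray is the final upgrade from ``a.e.\ $t$'' to ``all $t\in(0,T]$'' by invoking continuity in $t$ of the right-hand side: the paper explicitly remarks, immediately after the theorem, that $\int_t^T Z(s)\delta B^H(s)$ may be discontinuous in $t$ since $Z$ need not lie in $\mathbb{L}^{1,2}_H$, so that continuity is not available. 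The paper avoids the issue because the a priori estimate (\ref{a priori estimate}) controls $\sup_{t}e^{2\beta t}\mathbb{E}|\Delta Y(t)|^2$, hence (\ref{contraction equation}) gives $\mathbb{E}|Y_k(t)-Y(t)|^2\to0$ for \emph{every} $t$, with no subsequence or a.e.\ argument needed; alternatively, your a.e.\ identity already suffices if you simply redefine $Y(t)$ to equal the right-hand side for every $t>0$ (this changes $Y$ only on a $dt$-null set, so it represents the same element of $\mathcal{\bar V}_T^{1/2}$ and leaves the Lebesgue integral of the driver unchanged). Either repair is one line, so this is a slip in justification rather than a structural flaw.
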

\begin{remark}
Let us mention that it is not clear here, if the solution $Y$ has continuous paths or not.
Indeed, since $Z$ does not necessarily belong to $\mathbb{L}^{H}_{1,2}$, the divergence integral $\int_{t}^{T}Z(s)\delta B^{H}(s)$ can eventually
be discontinuous in $t$.
\end{remark}
\begin{proof}
The existence of the solution is obtained by the Banach fixed point theorem. Let us consider the mapping
$\Gamma:\mathcal{V}_{T}\times\mathcal{V}_{T}\rightarrow\mathcal{V}_{T}\times \mathcal{V}_{T}$
given by $\left(  U,V\right)  \longrightarrow\Gamma\left(  U,V\right)=\left(  Y,Z\right)  $, where $(Y,Z)$ is the unique solution in
$\mathcal{V}_{T}\times\mathcal{V}_{T}$ for the BSDE
\[
Y(t)=\xi+\int_{t}^{T}f\left(  s,\eta(s),U(s),V(s)\right) ds-\int_{t}^{T}Z(s)\delta B^{H}\left(  s\right) ,~t\in[0,T] .
\]
First, we remark that $\Gamma$ is well defined (see Proposition \ref{Proposition 4.5_4.3_HP-09}).

Let us show that $\Gamma$ is a contraction w.r.t. the norm $\|(u,v)\|_{1/2,H}:=\|u\|_{1/2}+\|v\|_{H}$, for $(u,v)\in\overline{\mathcal{V}}^{1/2}_{T}\times\overline{\mathcal{V}}^{H}_{T}$ (for the definition of $\|\cdot\|_{\alpha}$, see (\ref{def V^alpha})).

For $(U,V),(U^{\prime},V^{\prime})\in\mathcal{V}_{T}\times\mathcal{V}_{T}$ and $(Y,Z)=\Gamma(U,V)$, $(Y^{\prime},Z^{\prime})=\Gamma(U^{\prime},V^{\prime})$,
we set $\Delta Y=Y-Y^{\prime}$, $\Delta Z=Z-Z^{\prime}$, $\Delta U=U-U^{\prime}$ and $\Delta V=V-V^{\prime}$.
Using Proposition \ref{estimate for main result Hu&Peng} we know
that there exists $C(\beta)$ which can depend on $L$ and $T$, such that $\lim\limits_{\beta\rightarrow\infty}C(\beta)=0$,
and
\begin{equation}\label{contraction equation}
\begin{array}
[c]{l}
\sup\limits_{t\in[0,T]}e^{2\beta t}\mathbb{E}\left\vert \Delta Y(t)\right\vert^{2}
+{\displaystyle\int_{0}^{T}}e^{2\beta s}\mathbb{E}\left\vert \Delta Y\left(s\right)\right\vert ^{2}ds
+{\displaystyle\int_{0}^{T}}s^{2H-1}e^{2\beta s}\mathbb{E}|\Delta Z(s)|^{2}ds\medskip\\
\quad\quad\leq C(\beta)\left(  {\displaystyle\int_{0}^{T}}e^{2\beta s}\mathbb{E}|\Delta U(s)|^{2}ds
+{\displaystyle\int_{0}^{T}}s^{2H-1}e^{2\beta s}\mathbb{E}|\Delta V(s)|^{2}ds\right).
\end{array}
\end{equation}
Taking $\beta$ large enough such that $C(\beta)\leq1/2$, then $\Gamma$ becomes a strict contraction on $\mathcal{V}_{T}\times
\mathcal{V}_{T}$ w.r.t. the norm $\|(\cdot,\cdot)\|_{1/2,H}$.

Now, we define $\left\{(Y_{k},Z_{k})\right\}_{k\in\mathbb{N}}$ recursively by putting $Y_{0}=\chi(t,\eta(t))$,
$Z_{0}=\psi(t,\eta(t))$ for $\chi,\psi\in C_{pol}^{1,3}([0,T]\times\mathbb{R})$ with
$\dfrac{\partial \chi}{\partial t},\dfrac{\partial \psi}{\partial t}\in C_{pol}^{0,1}([0,T]\times\mathbb{R})$,
and by defining $(Y_{k+1},Z_{k+1})\in\mathcal{V}_{T}\times\mathcal{V}_{T}$ through the BSDE
\begin{equation}\label{recursiv penalized fBSVI}
Y_{k+1}(t)=\xi+{\displaystyle\int_{t}^{T}}f\left(  s,\eta(s),Y_{k}(s),Z_{k}(s)\right)ds-
{\displaystyle\int_{t}^{T}}Z_{k+1}(s)dB^{H}(s),~t\in[0,T],
\end{equation}
$k\ge0$. From Proposition \ref{Proposition 4.5_4.3_HP-09}, we know that  for all $k\ge0$,
$Y_{k}(t)=u_{k}(t,\eta\left(  t\right)  )$, $Z_{k}(t)=v_{k}(t,\eta(t))$, for suitable
$u_{k},v_{k}\in C_{pol}^{1,3}\left([0,T]\times\mathbb{R}\right)$ with
$\dfrac{\partial u_{k}}{\partial t},\dfrac{\partial v_{k}}{\partial t}\in C_{pol}^{0,1}([0,T]\times\mathbb{R})$  such that
$Z_{k}(t)=\sigma(t)\dfrac{\partial}{\partial x}u_{k}(t,\eta(t))$, $t\in[0,T]$.

Since $\Gamma$ is contraction, which means that $\left\{(Y_{k},Z_{k})\right\}_{k\in\mathbb{N}}$
is a Cauchy sequence in $\mathcal{\bar{V}}_{T}^{1/2}\times\mathcal{\bar{V}}_{T}^{H}$, then
there exists $(Y,Z)\in \mathcal{\bar{V}}_{T}^{1/2}\times\mathcal{\bar{V}}_{T}^{H}$ such that
$(Y_{k},Z_{k})$ converges to $(Y,Z)$ in $\mathcal{\bar{V}}_{T}^{1/2}\times\mathcal{\bar{V}}_{T}^{H}$.
Moreover, we can prove that $(Y,Z)$ satisfies
\[
Y(t)=\xi+{\displaystyle\int_{t}^{T}}f\left(  s,\eta(s),Y(s),Z(s)\right)ds
-{\displaystyle\int_{t}^{T}}Z(s)\delta B^{H}\left(  s\right),~t\in\left(0,T\right].
\]
Indeed, from (\ref{contraction equation})
\begin{equation}\label{convergent property}
\begin{array}
[c]{ll}
\lim\limits_{k\rightarrow\infty}\mathbb{E}|Y_{k}(t)-Y(t)|^{2}=0,~t\in[0,T],\text{ and } \medskip\\
\lim\limits_{k\rightarrow\infty}\mathbb{E}{\displaystyle\int_{0}^{T}}|Y_{k}(s)-Y(s)|^{2}ds
+\mathbb{E}{\displaystyle\int_{0}^{T}}s^{2H-1}|Z_{k}(s)-Z(s)|^{2}ds=0.
\end{array}
\end{equation}
Then, for arbitrary $\rho>0$ and for all $t\in[\rho,T]$,
\begin{equation}\label{equation 7}
\begin{array}
[c]{ll}
\lim\limits_{k\rightarrow\infty}\left\{-Y_{k+1}(t)+\xi+{\displaystyle\int_{t}^{T}}f\left(  s,\eta(s),Y_{k}(s),Z_{k}(s)\right)ds\right\}\medskip\\
=-Y(t)+\xi+{\displaystyle\int_{t}^{T}}f\left(  s,\eta(s),Y(s),Z(s)\right) ds:=\theta(t),\text{ in } L^{2}(\Omega,\mathcal{F},P),
\end{array}
\end{equation}
and $Z_{k}\mathbf{1}_{[t,T]}\rightarrow Z\mathbf{1}_{[t,T]}$ in $L^{2}(\Omega,\mathcal{F},P;\mathcal{H})$.
Therefore, using Definition \ref{divergence operator}, (\ref{recursiv penalized fBSVI}) and (\ref{equation 7}), we see that
for all $F\in\mathcal{P}_{T}$,
\[
\begin{array}
[c]{l}
\mathbb{E}\left( \left\langle D_{\cdot}^{H}F,Z(\cdot)\mathbf{1}_{[t,T]}(\cdot)\right\rangle _{T}\right)
=\lim\limits_{k\rightarrow\infty}\mathbb{E}\left( \left\langle D_{\cdot}^{H}F,Z_{k+1}(\cdot)\mathbf{1}_{[t,T]}(\cdot)\right\rangle _{T}\right) \medskip\\
=\lim\limits_{k\rightarrow\infty}\mathbb{E}\left( F{{\displaystyle\int_{t}^{T}}}Z_{k+1}(s)\delta B^{H}(s)\right)=\mathbb{E}(F~\theta(t)).
\end{array}
\]
From the definition of the divergence operator $\delta$, it follows that $Z\mathbf{1}_{[t,T]}\in Dom(\delta)$ and $\delta(Z\mathbf{1}_{[t,T]})=\theta(t)$. Consequently, we have
\[
Y(t)=\xi+{\displaystyle\int_{t}^{T}}f\left(  s,\eta(s),Y(s),Z(s)\right)ds-{\displaystyle\int_{t}^{T}}Z(s)\delta B^{H}(s),
~a.s., ~\text{ for all } t\in[\rho,T].
\]
Considering that $\rho$ is arbitrary, we complete our proof.\hfill
\end{proof}
\begin{proposition}\label{connection Y, Z}
Let $(Y,Z)\in\mathcal{V}_{T}\times  \mathcal{V}_{T}$ be the solution of BSDE
(\ref{BSDE}) constructed in the proof of Theorem \ref{main result Hu&Peng}. Then for almost $ t\in(0,T]$,
\[
\mathbb{D}_{t}^{H}Y(t)=\dfrac{\hat{\sigma}(t)}{\sigma(t)}Z(t).
\]
\end{proposition}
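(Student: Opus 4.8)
The plan is to exploit the two structural facts already available for solutions lying in $\mathcal{V}_T$: the representation $Y(t)=u(t,\eta(t))$, $Z(t)=v(t,\eta(t))$ with $u,v\in C^{1,3}_{pol}([0,T]\times\mathbb{R})$ (and $\partial u/\partial t,\partial v/\partial t\in C^{0,1}_{pol}$), together with the coefficient relation $v=\sigma\,\partial u/\partial x$. The whole point is that, on the diagonal $s=t$, the operator $\mathbb{D}^H_t$ applied to $Y(t)$ produces the factor $\hat\sigma(t)$, whereas $Z(t)$ carries the factor $\sigma(t)$; dividing then gives the announced ratio $\hat\sigma(t)/\sigma(t)$.

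First I would re-establish the relation $v(t,x)=\sigma(t)\tfrac{\partial}{\partial x}u(t,x)$ for the present solution, by the same argument used at the end of the proof of Proposition~\ref{Proposition 4.5_4.3_HP-09}. Applying the deterministic It\^o formula (\ref{deterministic Ito's formula}) to $u(t,\eta(t))$ gives a decomposition whose diffusion part is $\sigma(t)\tfrac{\partial}{\partial x}u(t,\eta(t))\,\delta B^H(t)$; comparing the $ds$- and $\delta B^H$-parts with the BSDE (\ref{BSDE}) and invoking Lemma~\ref{Lemma 4.2_HP-09} to identify the integrand of the divergence integral yields exactly (\ref{relation between u and v}), now for $(Y,Z)$.

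Next I would compute $\mathbb{D}^H_tY(t)$ directly. Since $\eta(t)=\eta(0)+\int_0^t\sigma(s)\delta B^H(s)$ is a Wiener integral, one has $D^H_s\eta(t)=\sigma(s)I_{[0,t]}(s)$, so the chain rule gives $D^H_sY(t)=\tfrac{\partial}{\partial x}u(t,\eta(t))\,\sigma(s)I_{[0,t]}(s)$, whence, by the definition (\ref{another derivative}) of $\mathbb{D}^H$,
\[
\mathbb{D}^H_sY(t)=\frac{\partial}{\partial x}u(t,\eta(t))\int_0^t\phi(s-v)\sigma(v)\,dv ,
\]
which is precisely the formula already obtained in the proof of Lemma~\ref{property of VT}. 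Setting $s=t$ and recalling the definition (\ref{sigma tilde_hat}), $\hat\sigma(t)=\int_0^t\phi(t-v)\sigma(v)\,dv$, gives $\mathbb{D}^H_tY(t)=\hat\sigma(t)\,\tfrac{\partial}{\partial x}u(t,\eta(t))$. Finally I would substitute $\tfrac{\partial}{\partial x}u(t,\eta(t))=Z(t)/\sigma(t)$, which is legitimate because $Z(t)=v(t,\eta(t))=\sigma(t)\tfrac{\partial}{\partial x}u(t,\eta(t))$ by the previous step and $\sigma(t)\neq0$ by $(H_{2})$; this produces $\mathbb{D}^H_tY(t)=\tfrac{\hat\sigma(t)}{\sigma(t)}Z(t)$, as claimed.

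The argument is short precisely because both ingredients are already in place, so no genuinely new estimate is needed; the only point deserving care is that the diagonal derivative $\mathbb{D}^H_tY(t)$ is well defined for almost every $t$. This is guaranteed by the inclusion $\mathcal{V}_T\subset\mathbb{L}^{1,2}_H$ from Lemma~\ref{property of VT}, whose proof supplies the bound $\mathbb{E}\int_0^T\int_0^T|\mathbb{D}^H_su(t,\eta(t))|^2\,ds\,dt<\infty$; this integrability is exactly what makes the restriction to $s=t$ meaningful off a Lebesgue-null set, and it is the reason the statement is phrased for almost every $t\in(0,T]$ rather than for every $t$.
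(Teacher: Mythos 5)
Your computation is correct for a pair that genuinely lies in $\mathcal{V}_T\times\mathcal{V}_T$, i.e.\ admits a representation $Y(t)=u(t,\eta(t))$, $Z(t)=v(t,\eta(t))$ with $u,v\in C^{1,3}_{pol}$: in that case $D_rY(t)=\frac{\partial}{\partial x}u(t,\eta(t))\,\sigma(r)I_{[0,t]}(r)$, the relation $v=\sigma\,\frac{\partial}{\partial x}u$ follows from the It\^o formula and Lemma \ref{Lemma 4.2_HP-09}, and evaluating $\mathbb{D}^H$ on the diagonal gives the claim (in fact for every $t$, not merely almost every $t$). The difficulty is that the solution actually constructed in Theorem \ref{main result Hu&Peng} is obtained only as the limit of the Picard iterates $(Y_k,Z_k)\in\mathcal{V}_T\times\mathcal{V}_T$ in the completed spaces $\mathcal{\bar V}_T^{1/2}\times\mathcal{\bar V}_T^{H}$; despite the wording of the proposition, no smooth representation $Y(t)=u(t,\eta(t))$ is available for that limit (see the remark after Theorem \ref{main result Hu&Peng}, which even leaves open whether $Y$ has continuous paths). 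Your proof therefore assumes precisely the regularity that is missing, and that regularity is where the real content of the proposition lies.

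The paper runs your computation only at the level of the approximants, where $Z_k(t)=\sigma(t)\frac{\partial}{\partial x}u_k(t,\eta(t))$ is already known, so that $D_rY_k(t)=\frac{\sigma(r)}{\sigma(t)}Z_k(t)\mathbf{1}_{[0,t]}(r)$, and then passes to the limit: along a subsequence $\mathbb{E}|Z_k(t)-Z(t)|^2\to0$ for a.e.\ $t\in[\rho,T]$, hence $D_\cdot Y_k(t)\to\frac{\sigma(\cdot)}{\sigma(t)}Z(t)\mathbf{1}_{[0,t]}(\cdot)$ in $L^2(\Omega,\mathcal{F},P;\mathcal{H})$, and the closability of the Malliavin derivative identifies this limit as $D_\cdot Y(t)$; integrating against $\phi(t-\cdot)$ then yields the identity for a.e.\ $t$. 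This limit passage, resting on the closedness of $D^H$ and the $L^2$-convergence of the iterates, is entirely absent from your argument, and it is also the true source of the ``almost every $t$'' in the statement --- not the integrability of $\mathbb{D}^H_s u(t,\eta(t))$ that you invoke in your final paragraph.
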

\begin{proof}
From (\ref{recursiv penalized fBSVI}) we know that $(Y_{k},Z_{k})\in  \mathcal{V}_{T}\times \mathcal{V}_{T}$ satisfies
\[
Y_{k+1}(t)=\xi+{\displaystyle\int_{t}^{T}}f\left(  s,\eta(s),Y_{k}(s),Z_{k}(s)\right)ds
-{\displaystyle\int_{t}^{T}}Z_{k+1}(s)\delta B^{H}(s),~ t\in[0,T],~k\ge1.
\]
We recall that $Y_{k}(t)=u_{k}(t,\eta(t))$, $Z_{k}(t)=v_{k}(t,\eta(t))$, $t\in[0,T]$ and
$Z_{k}(t)=\sigma(t)\dfrac{\partial}{\partial x}u_{k}(t,\eta(t))$.
Since $(Y_{k},Z_{k})\rightarrow(Y,Z)$ in $\mathcal{\bar{V}}_{T}^{1/2}\times\mathcal{\bar{V}}_{T}^{H}$,
there exists a subsequence, by convenience still denoted by $\left\{(Y_{k},Z_{k})\right\}_{k\in\mathbb{N}}$, such that
for arbitrary $\rho>0$, we have that
\[
\lim\limits_{k\rightarrow\infty}\mathbb{E}|Y_{k}(s)-Y(s)|^{2}=0 \text{ and }
\lim\limits_{k\rightarrow\infty}\mathbb{E}|Z_{k}(s)-Z(s)|^{2}=0,\text{ for almost all } s\in[\rho,T].
\]
As a process with the parameter $r$,
\[
\begin{array}
[c]{l}
D_{r}Y_{k}(t)={\displaystyle\dfrac{\partial}{\partial x}}u(t,\eta(t))\sigma(r)\mathbf{1}_{[0,t]}(r)
={\displaystyle\dfrac{\sigma(r)}{\sigma(t)}}Z_{k}(t)\mathbf{1}_{[0,t]}(r) \xrightarrow[]{\;L^{2}([0,T]\times\Omega) \;} {\displaystyle\dfrac{\sigma(r)}{\sigma(t)}}Z(t)\mathbf{1}_{[0,t]}(r),
\medskip\\
\qquad\qquad\qquad\qquad\text{ as } k\rightarrow\infty,\quad \text{ for almost all } t\in[\rho,T].
\end{array}
\]
On the other hand, since $L^{2}([0,T])\subset\mathcal{H}$,
we conclude that the convergence also holds in $L^{2}(\Omega,\mathcal{F},P;\mathcal{H})$. Consequently, in $L^{2}(\Omega,\mathcal{F},P;\mathcal{H})$
\[
D_{r}Y(t)=\lim\limits_{k\rightarrow\infty}D_{r}Y_{k}(t)=\lim\limits_{k\rightarrow\infty}\dfrac{\sigma(r)}{\sigma(t)}
Z_{k}(t)\mathbf{1}_{[0,t]}(r)=\dfrac{\sigma(r)}{\sigma(t)}Z(t)\mathbf{1}_{[0,t]}(r),~\text{ a.e. }t\in[\rho,T],
\]
and, thus,
\[
\mathbb{D}_{t}^{H}Y(t)={\int_{0}^{T}}\phi(t-r)D_{r}Y(t)dr=\dfrac{\hat{\sigma}(t)}{\sigma(t)}Z(t),~\text{ a.e. }t\in[\rho,T],
\]
where $\hat{\sigma}(t)$ is defined by (\ref{sigma tilde_hat}). Considering that $\rho>0$ is arbitrary, we have
\[
\mathbb{D}_{t}^{H}Y(t)=\dfrac{\hat{\sigma}(t)}{\sigma(t)}Z(t),~\text{ a.e. } t\in(0,T],
\]
which completes the proof.\hfill
\end{proof}
\subsection{Uniqueness}
Before giving our uniqueness result, we introduce the following spaces:
\[
\begin{array}
[c]{l}
\mathcal{M}=\Bigg\{X\big|X(t)=X(0)-{\displaystyle\int_{0}^{t}}v_{s}ds-{\displaystyle\int_{0}^{t}}u_{s}\delta B^{H}(s),~t\in[0,T] \text{ with }
\medskip\\
\qquad\qquad\qquad~u\in\mathcal{V}_{T},~v_{s}=v(s,\eta(s)), \text{ where }v\in C^{0,1}_{pol}([0,T]\times\mathbb{R})\Bigg\}
\end{array}
\]
and $\mathcal{S}_{f}$, the set of $(Y,Z)\in  \mathcal{\bar{V}}_{T}^{1/2}\times\mathcal{\bar{V}}_{T}^{H}$ such that, for $t\in(0,T]$,
\[
\begin{array}
[c]{l}
(i)~\mathbb{E}|Y(t)|^{2}+2\mathbb{E}{\displaystyle\int_{t}^{T}}\dfrac{\hat{\sigma}(s)}{\sigma(s)}| Z(s)|^{2}ds
=\mathbb{E}|\xi|^{2}+2\mathbb{E}{\displaystyle\int_{t}^{T}}Y(s)f(s,\eta(s),Y(s),Z(s))ds,\medskip\\
(ii)~ \mathbb{E}\left[Y(t)X(t)\right]+\mathbb{E}{\displaystyle\int_{t}^{T}}
\left[\dfrac{\hat{\sigma}(s)}{\sigma(s)}u_{s}+\mathbb{D}_{s}^{H}X(s)\right]Z(s)ds=\mathbb{E}\left[\xi X(T)\right]\medskip\\
\qquad+\mathbb{E}{\displaystyle\int_{t}^{T}}\left[Y(s)v_{s}+X(s)f(s,\eta(s),Y(s),Z(s))\right]ds,
~X\in\mathcal{M}.
\end{array}
\]

\begin{theorem}\label{uniqueness theorem}
Let the assumptions $(H_{1})$-$(H_{4})$ be satisfied.
Then BSDE
\[
Y(t)=\xi+\int_{t}^{T}f\left( s,\eta(s),Y(s),Z(s)\right)  ds-\int_{t}^{T}Z(s)\delta B^{H}(s)
\]
has a unique solution $(Y,Z)\in\mathcal{S}_{f}$.
\end{theorem}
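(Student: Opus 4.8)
The plan is to prove the two halves separately: first that the solution $(\bar{Y},\bar{Z})\in\mathcal{\bar{V}}_{T}^{1/2}\times\mathcal{\bar{V}}_{T}^{H}$ produced in Theorem \ref{main result Hu&Peng} actually lies in $\mathcal{S}_{f}$ (existence of an $\mathcal{S}_{f}$-solution), and second that every $(Y,Z)\in\mathcal{S}_{f}$ must coincide with it (uniqueness). Throughout I keep the Picard iterates $(Y_{k},Z_{k})$ from the proof of Theorem \ref{main result Hu&Peng}, for which $Y_{k}(t)=u_{k}(t,\eta(t))$ and $Z_{k}(t)=\sigma(t)\partial_{x}u_{k}(t,\eta(t))$ with $u_{k}\in C^{1,3}_{pol}$, so that $(Y_{k},Z_{k})\in\mathcal{V}_{T}\times\mathcal{V}_{T}$ and $(Y_{k},Z_{k})\to(\bar{Y},\bar{Z})$ in the sense of (\ref{convergent property}).

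For the existence half I would first apply Theorem \ref{Ito formula for divergence type integral} to $Y_{k+1}^{2}$ over $[t,T]$; since $\mathbb{D}^{H}_{s}Y_{k+1}(s)=\tfrac{\hat{\sigma}(s)}{\sigma(s)}Z_{k+1}(s)$ (the computation of Proposition \ref{connection Y, Z} applied to the iterate), taking expectations and using that the divergence integral has zero mean (Theorem \ref{Th.1_Hu 2005}, Lemma \ref{Lemma 3.3 HU PENG}) yields the identity (i) with indices $k+1$ on the left and $k$ inside $f$ on the right. Polarizing the same formula gives a product rule for $Y_{k+1}(t)X(t)$ against a fixed test process $X\in\mathcal{M}$, whose correction term $\int (u^{Y_{k+1}}\mathbb{D}^{H}_{s}X+u^{X}\mathbb{D}^{H}_{s}Y_{k+1})\,ds$, after inserting $\mathbb{D}^{H}_{s}Y_{k+1}(s)=\tfrac{\hat{\sigma}}{\sigma}Z_{k+1}$, produces exactly the bracketed $Z$-term of (ii). Passing to the limit $k\to\infty$ then gives (i) and (ii) for $(\bar{Y},\bar{Z})$: the pointwise $L^{2}$-convergence of $Y_{k}(t)$, the $\mathcal{\bar{V}}_{T}^{H}$-convergence of $Z_{k}$, and the Lipschitz bound $(H_{3})$ control every term, where crucially (\ref{properties for sigma})(b) identifies the weight $\hat{\sigma}/\sigma\asymp s^{2H-1}$ with the weight defining $\|\cdot\|_{H}$.

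For uniqueness, let $(Y,Z)\in\mathcal{S}_{f}$ be arbitrary and expand $\mathbb{E}|Y(t)-\bar{Y}(t)|^{2}=\mathbb{E}|Y(t)|^{2}-2\mathbb{E}[Y(t)\bar{Y}(t)]+\mathbb{E}|\bar{Y}(t)|^{2}$. I would feed the energy identity (i) into the first and third terms (for $Y$ and for $\bar{Y}$), and handle the cross term by invoking (ii) for $(Y,Z)$ with the test process $X=Y_{k}$, which belongs to $\mathcal{M}$ because its drift $f(\cdot,\eta,Y_{k-1},Z_{k-1})$ is of the form $v(s,\eta(s))$ and its diffusion coefficient $-Z_{k}$ lies in $\mathcal{V}_{T}$; letting $k\to\infty$ converts this into a relation for $\mathbb{E}[Y(t)\bar{Y}(t)]$. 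Combining the three, the $\mathbb{E}|\xi|^{2}$ contributions cancel, the quadratic terms reassemble into $2\mathbb{E}\int_{t}^{T}\tfrac{\hat{\sigma}}{\sigma}|Z-\bar{Z}|^{2}\,ds$, and the driver terms into $2\mathbb{E}\int_{t}^{T}(Y-\bar{Y})\bigl(f(s,\eta,Y,Z)-f(s,\eta,\bar{Y},\bar{Z})\bigr)\,ds$. Estimating the latter by $(H_{3})$, using Young's inequality to absorb the $|Z-\bar{Z}|$ factor against the coercive term $\tfrac{\hat{\sigma}}{\sigma}\ge\tfrac1M s^{2H-1}$, and applying the backward Gronwall-type inequality stated above gives $\mathbb{E}|Y(t)-\bar{Y}(t)|^{2}=0$ for all $t$ together with $\int_{0}^{T}s^{2H-1}\mathbb{E}|Z-\bar{Z}|^{2}\,ds=0$, that is $Y=\bar{Y}$ in $\mathcal{\bar{V}}_{T}^{1/2}$ and $Z=\bar{Z}$ in $\mathcal{\bar{V}}_{T}^{H}$.

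The hard part will be the two limit passages, because the $Z$-components converge only in the weighted norm $\|\cdot\|_{H}$ (weight $s^{2H-1}$) and the divergence integrals are not martingales, so their control rests entirely on the zero-mean and isometry statements of Theorem \ref{Th.1_Hu 2005} together with the comparability $\hat{\sigma}/\sigma\asymp s^{2H-1}$ of (\ref{properties for sigma})(b). In particular, making the bracketed $Z$-correction term of (ii) pass to the limit and recombine into $\tfrac{\hat{\sigma}}{\sigma}|Z-\bar{Z}|^{2}$ is the delicate step; verifying that each Picard iterate $Y_{k}$ genuinely belongs to $\mathcal{M}$, so that it is an admissible test process, is the second point that must be checked with care.
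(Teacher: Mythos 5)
Your proposal follows essentially the same route as the paper: establish (i) and (ii) of $\mathcal{S}_{f}$ for the constructed solution by applying the It\^{o} formula of Theorem \ref{Ito formula for divergence type integral} to the Picard iterates and passing to the limit, and then prove uniqueness by expanding $\mathbb{E}|Y(t)-\bar{Y}(t)|^{2}$, using identity (i) for each solution and identity (ii) tested against the iterates $Y_{k}\in\mathcal{M}$ for the cross term, before concluding with the Lipschitz bound, Young's inequality and Gronwall. The key observations you single out --- that $\mathbb{D}^{H}_{s}Y_{k+1}(s)=\tfrac{\hat{\sigma}(s)}{\sigma(s)}Z_{k+1}(s)$, that the iterates are admissible elements of $\mathcal{M}$, and that $\hat{\sigma}/\sigma\asymp s^{2H-1}$ controls the limit passages --- are exactly the ones the paper relies on.
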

\begin{proof}
We show first that the solution $(Y,Z)$ we constructed in the proof of Theorem \ref{main result Hu&Peng} belongs to $\mathcal{S}_{f}$.
Indeed, the sequence $\left\{(Y_{k},Z_{k})\right\}_{k\in\mathbb{N}}$ introduced in the proof of Theorem \ref{main result Hu&Peng} is in $\mathcal{V}_{T}\times\mathcal{V}_{T}$ and  converges to $(Y,Z)$ in $\mathcal{\bar{V}}_{T}^{1/2}\times\mathcal{\bar{V}}_{T}^{H}$.
Applying the It\^{o} formula to $Y_{k+1}^{2}$ (see Theorem \ref{Ito formula for divergence type integral}, using $\mathbb{D}_{t}^{H}Y(t)=\dfrac{\hat{\sigma}(t)}{\sigma(t)}Z(t)$) and taking the expectation, we have
\begin{equation}\label{equation for Yk}
\mathbb{E}|Y_{k+1}(t)|^{2}+2\mathbb{E}{\displaystyle\int_{t}^{T}}\dfrac{\hat{\sigma}(s)}{\sigma(s)}| Z_{k+1}(s)|^{2}ds
=\mathbb{E}|\xi|^{2}+2\mathbb{E}{\displaystyle\int_{t}^{T}}Y_{k+1}(s)f(s,\eta(s),Y_{k}(s),Z_{k}(s))ds.
\end{equation}
Moreover, from (\ref{convergent property}) we know
\[
\begin{array}
[c]{ll}
\lim\limits_{k\rightarrow\infty}\mathbb{E}|Y_{k}(t)-Y(t)|^{2}=0,~t\in[0,T],\text{ and } \medskip\\
\lim\limits_{k\rightarrow\infty}\mathbb{E}{\displaystyle\int_{t}^{T}}|Y_{k}(s)-Y(s)|^{2}ds
+\mathbb{E}{\displaystyle\int_{t}^{T}}s^{2H-1}|Z_{k}(s)-Z(s)|^{2}ds=0,~t\in[0,T].
\end{array}
\]
Letting $k\rightarrow\infty$ in (\ref{equation for Yk}), it follows that for arbitrary $\rho>0$,
\begin{equation}\label{equation for Y}
\mathbb{E}|Y(t)|^{2}+2\mathbb{E}{\displaystyle\int_{t}^{T}}\dfrac{\hat{\sigma}(s)}{\sigma(s)}| Z(s)|^{2}ds
=\mathbb{E}|\xi|^{2}+2\mathbb{E}{\displaystyle\int_{t}^{T}}Y(s)f(s,\eta(s),Y(s),Z(s))ds,~t\in[\rho,T].
\end{equation}
On the other hand, for any $X\in\mathcal{M}$, we deduce from Theorem \ref{Ito formula for divergence type integral},
\[
\begin{array}
[c]{ll}
\mathbb{E}\left[Y_{k+1}(t)X(t)\right]+\mathbb{E}{\displaystyle\int_{t}^{T}}
\left[u_{s}\mathbb{D}^{H}_{s}Y_{k+1}(s)+Z_{k+1}(s)\mathbb{D}^{H}_{s}X(s)\right]ds\medskip\\
=\mathbb{E}\left[\xi X(T)\right]+\mathbb{E}{\displaystyle\int_{t}^{T}}\left[Y_{k+1}(s)v_{s}+X(s)f(s,\eta(s),Y_{k}(s),Z_{k}(s))\right]ds.
\end{array}
\]
Letting $k\rightarrow\infty$ in the above equation and recalling that
$\mathbb{D}_{s}^{H}Y_{k+1}(s)=\dfrac{\hat{\sigma}(s)}{\sigma(s)}Z_{k+1}(s)$, we obtain for arbitrary $\rho>0$,
\begin{equation}\label{equation for Y and X}
\begin{array}
[c]{ll}
\mathbb{E}\left[Y(t)X(t)\right]+\mathbb{E}{\displaystyle\int_{t}^{T}}
\left[{\displaystyle\dfrac{\hat{\sigma}(s)}{\sigma(s)}}u_{s}+\mathbb{D}^{H}_{s}X(s)\right]Z(s)ds\medskip\\
=\mathbb{E}\left[\xi X(T)\right]+\mathbb{E}{\displaystyle\int_{t}^{T}}\left[Y(s)v_{s}+X(s)f(s,\eta(s),Y(s),Z(s))\right]ds,~
t\in[\rho,T].
\end{array}
\end{equation}
Consequently, (\ref{equation for Y}) and (\ref{equation for Y and X}) yield that $(Y,Z)\in\mathcal{S}_{f}$.

Now, it remains to show the uniqueness in the class $\mathcal{S}_{f}$.
We suppose that $(\tilde{Y},\tilde{Z})\in \mathcal{S}_{f}$ is another solution of BSDE (\ref{BSDE}).
Then, for arbitrary $\rho>0$,
\begin{equation}\label{equation for tilde Y}
\mathbb{E}|\tilde{Y}(t)|^{2}+2\mathbb{E}{\displaystyle\int_{t}^{T}}\dfrac{\hat{\sigma}(s)}{\sigma(s)}| \tilde{Z}(s)|^{2}ds
=\mathbb{E}|\xi|^{2}+2\mathbb{E}{\displaystyle\int_{t}^{T}}\tilde{Y}(s)f(s,\eta(s),\tilde{Y}(s),\tilde{Z}(s))ds,~t\in[\rho,T].
\end{equation}
and
\[
\begin{array}
[c]{ll}
\mathbb{E}\left[\tilde{Y}(t)Y_{k+1}(t)\right]+\mathbb{E}{\displaystyle\int_{t}^{T}}
2{\displaystyle\dfrac{\hat{\sigma}(s)}{\sigma(s)}}Z_{k+1}(s)\tilde{Z}(s)ds\medskip\\
=\mathbb{E}|\xi|^{2}+\mathbb{E}{\displaystyle\int_{t}^{T}}
\left[\tilde{Y}(s)f(s,\eta(s),Y_{k}(s),Z_{k}(s))+Y_{k+1}(s)f(s,\eta(s),\tilde{Y}(s),\tilde{Z}(s))\right]ds,~
t\in[\rho,T],
\end{array}
\]
and letting $k\rightarrow\infty$, we have
\begin{equation}\label{equation for tilde Y and Y}
\begin{array}
[c]{ll}
\mathbb{E}\left[\tilde{Y}(t)Y(t)\right]+\mathbb{E}{\displaystyle\int_{t}^{T}}2\dfrac{\hat{\sigma}(s)}{\sigma(s)}Z(s)\tilde{Z}(s)ds\medskip\\
=\mathbb{E}|\xi|^{2}+\mathbb{E}{\displaystyle\int_{t}^{T}}
\left[\tilde{Y}(s)f(s,\eta(s),Y(s),Z(s))+Y(s)f(s,\eta(s),\tilde{Y}(s),\tilde{Z}(s))\right]ds,~
t\in[\rho,T].
\end{array}
\end{equation}
Thus, from (\ref{equation for Y}), (\ref{equation for tilde Y}) and (\ref{equation for tilde Y and Y}) as well as $\mathbb{E}|Y(t)-\tilde{Y}(t)|^{2}=\mathbb{E}|Y(t)|^{2}-2\mathbb{E}\left[Y(t)\tilde{Y}(t)\right]+\mathbb{E}|\tilde{Y}(t)|^{2}$ we have,
for $t\in[\rho,T]$,
\[
\begin{array}
[c]{ll}
\mathbb{E}|Y(t)-\tilde{Y}(t)|^{2}+2\mathbb{E}{\displaystyle\int_{t}^{T}}\dfrac{\hat{\sigma}(s)}{\sigma(s)}|Z(s)-\tilde{Z}(s)|^{2}ds\medskip\\
=\mathbb{E}{\displaystyle\int_{t}^{T}}
\left[Y(s)-\tilde{Y}(s)\right]\left[f(s,\eta(s),Y(s),Z(s))-f(s,\eta(s),\tilde{Y}(s),\tilde{Z}(s))\right]ds\medskip\\
\leq \mathbb{E}{\displaystyle\int_{t}^{T}}
\left(L|Y(s)-\tilde{Y}(s)|^{2}+L^{2}Ms^{1-2H}|Y(s)-\tilde{Y}(s)|^{2}+\dfrac{1}{M}s^{2H-1}|Z(s)-\tilde{Z}(s)|^{2}\right)ds.
\end{array}
\]
where $M$ is the constant introduced in Remark \ref{remark for sigma}. Then, using Remark \ref{remark for sigma}
\[
\begin{array}
[c]{l}
\mathbb{E}|Y(t)-\tilde{Y}(t)|^{2}+\dfrac{1}{M}\mathbb{E}{\displaystyle\int_{t}^{T}}s^{2H-1}|Z(s)-\tilde{Z}(s)|^{2}ds\medskip\\
\leq \mathbb{E}{\displaystyle\int_{t}^{T}}(L+L^{2}Ms^{1-2H})|Y(s)-\tilde{Y}(s)|^{2}ds
\end{array}
\]
and Gronwall's inequality yields that
\[
\begin{array}
[c]{ll}
\mathbb{E}|Y(t)-\tilde{Y}(t)|^{2}+\dfrac{1}{M}\mathbb{E}{\displaystyle\int_{t}^{T}}s^{2H-1}|Z(s)-\tilde{Z}(s)|^{2}ds=0,~t\in[\rho,T].
\end{array}
\]
Since $\rho>0$ is arbitrary, our proof is complete now.\hfill
\end{proof}
\section{Fractional backward stochastic variational inequality}
Let us now consider the following BSVI driven by a fBm:
\begin{equation}
\left\{
\begin{array}
[c]{l}
-dY(t)+\partial\varphi(Y(t))dt\ni f(t,\eta(t),Y(t),Z(t))dt-Z(t)\delta B^{H}(t),\qquad t\in[0,T],\medskip\\
Y(T)=\xi,
\end{array}
\right.  \label{fBSVI}
\end{equation}
where the coefficients satisfy $(H_{1})$-$(H_{4})$ and $\partial\varphi$ is the subdifferential of
the function $\varphi:\mathbb{R}\rightarrow(-\infty,+\infty]$ satisfying
\begin{itemize}
\item[$(H_{5})$] $\varphi$ is a convex lower semicontinuous (l.s.c.)
function with $\varphi(x)\geq\varphi(0)=0$, for all $x\in\mathbb{R}$ and
$\mathbb{E}|\varphi(\xi)|<\infty$ (Recall that $\xi=g(\eta(T))$).
\end{itemize}
Let us introduce the following notations:
\[
\begin{array}
[c]{l}
Dom~\varphi=\{u\in\mathbb{R}:\varphi(u)<\infty\},\medskip\\
\partial\varphi(u)=\{u^{\ast}\in\mathbb{R}:u^{\ast}(v-u)+\varphi(u)\leq
\varphi(v), \text{ for all } v\in\mathbb{R}\},\medskip\\
Dom(\partial\varphi)=\{u\in\mathbb{R}:\partial\varphi(u)\neq\emptyset
\},\medskip\\
(u,u^{\ast})\in\partial\varphi\Leftrightarrow u\in Dom(\partial\varphi
),u^{\ast}\in\partial\varphi(u).
\end{array}
\]
We know that the multivalued subdifferential operator $\partial\varphi$ is a
monotone operator, i.e.,
\[
(u^{\ast}-v^{\ast})(u-v)\geq0,\;\text{ for all }(u,u^{\ast}),(v,v^{\ast})\in \partial\varphi.
\]
Now, we give the definition of the solution for BSVI (\ref{fBSVI}).
\begin{definition}
A triple $(Y,Z,U)$ is a solution for BSVI (\ref{fBSVI}), if:
\[
\begin{array}
[c]{l}
(a_{1})~  Y, ~U\in\mathcal{\bar{V}}_{T}^{H}\text{ and } Z\in\mathcal{\bar{V}}_{T}^{2H-1/2}\,,\medskip\\
(a_{2})~ (Y(t),U(t))\in\partial\varphi,~dP\otimes dt~a.e.\text{ on
}\Omega\times[0,T],\medskip\\
(a_{3})~ Y(t)+{\displaystyle\int_{t}^{T}}U(s)ds=\xi+{\displaystyle\int
_{t}^{T}}f\left(  s,\eta(s),Y(s),Z(s)\right)ds-{\displaystyle\int_{t}^{T}
}Z(s)\delta B^{H}(s),~a.s., ~ t\in(0,T].
\end{array}
\]
\end{definition}
In this section, our objective is to show the following existence result:
\begin{theorem}\label{existence of fractional BSVI}
Let the assumptions $(H_{1})$-$(H_{5})$ be satisfied. There exists a solution of BSVI (\ref{fBSVI}).
\end{theorem}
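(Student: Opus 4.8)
The plan is to adapt the penalization (Moreau--Yosida) scheme of Pardoux and R\u{a}\c{s}canu \cite{PR-98} to the fractional divergence framework of Section 4. For $\varepsilon>0$ let $\varphi_{\varepsilon}$ be the Moreau--Yosida regularization of $\varphi$, with resolvent $J_{\varepsilon}=(I+\varepsilon\partial\varphi)^{-1}$ and gradient $\nabla\varphi_{\varepsilon}=\frac{1}{\varepsilon}(I-J_{\varepsilon})$; recall that $\nabla\varphi_{\varepsilon}$ is $\frac{1}{\varepsilon}$-Lipschitz, that $0\le\varphi_{\varepsilon}\uparrow\varphi$, and that $\nabla\varphi_{\varepsilon}(x)\in\partial\varphi(J_{\varepsilon}x)$. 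Since $\nabla\varphi_{\varepsilon}$ acts only on the $y$-variable and is globally Lipschitz, the coefficient $f_{\varepsilon}(t,x,y,z):=f(t,x,y,z)-\nabla\varphi_{\varepsilon}(y)$ still satisfies $(H_{3})$. Hence Theorem \ref{main result Hu&Peng} yields, for each $\varepsilon>0$, a solution $(Y_{\varepsilon},Z_{\varepsilon})\in\bar{\mathcal{V}}_{T}^{1/2}\times\bar{\mathcal{V}}_{T}^{H}$ of the penalized BSDE
\[
Y_{\varepsilon}(t)=\xi+\int_{t}^{T}\big[f(s,\eta(s),Y_{\varepsilon}(s),Z_{\varepsilon}(s))-\nabla\varphi_{\varepsilon}(Y_{\varepsilon}(s))\big]\,ds-\int_{t}^{T}Z_{\varepsilon}(s)\,\delta B^{H}(s),
\]
and by Proposition \ref{connection Y, Z} it satisfies $\mathbb{D}_{t}^{H}Y_{\varepsilon}(t)=\frac{\hat{\sigma}(t)}{\sigma(t)}Z_{\varepsilon}(t)$.

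Next I would derive a priori bounds uniform in $\varepsilon$. Applying the It\^{o} formula (Theorem \ref{Ito formula for divergence type integral}) to $|Y_{\varepsilon}|^{2}$, taking expectations, using $\nabla\varphi_{\varepsilon}(Y_{\varepsilon})Y_{\varepsilon}\ge\varphi_{\varepsilon}(Y_{\varepsilon})\ge0$, the Lipschitz bound on $f$, and $\frac{\hat{\sigma}}{\sigma}\ge\frac{1}{M}s^{2H-1}$ from Remark \ref{remark for sigma}, one obtains $\sup_{t}\mathbb{E}|Y_{\varepsilon}(t)|^{2}+\int_{0}^{T}s^{2H-1}\mathbb{E}|Z_{\varepsilon}(s)|^{2}\,ds\le C$ independently of $\varepsilon$. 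To control the penalization term, apply the It\^{o} formula to $\varphi_{\varepsilon}(Y_{\varepsilon})$ (after a further $C^{2}$-smoothing of $\varphi_{\varepsilon}$); convexity $\varphi_{\varepsilon}''\ge0$ together with $\frac{\hat{\sigma}}{\sigma}>0$ makes the second-order trace term nonnegative, so that, invoking $\mathbb{E}|\varphi(\xi)|<\infty$ from $(H_{5})$ and the bound $\varphi_{\varepsilon}(\xi)\le\varphi(\xi)$, one gets
\[
\mathbb{E}\int_{0}^{T}|\nabla\varphi_{\varepsilon}(Y_{\varepsilon}(s))|^{2}\,ds\le C.
\]

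For the passage to the limit I would establish a Cauchy property. Writing $U_{\varepsilon}:=\nabla\varphi_{\varepsilon}(Y_{\varepsilon})$ and applying the It\^{o} formula to $|Y_{\varepsilon}-Y_{\delta}|^{2}$, the only delicate term is the monotone one, handled by the elementary inequality
\[
\big(\nabla\varphi_{\varepsilon}(Y_{\varepsilon})-\nabla\varphi_{\delta}(Y_{\delta})\big)\big(Y_{\varepsilon}-Y_{\delta}\big)\ge-(\varepsilon+\delta)\,\nabla\varphi_{\varepsilon}(Y_{\varepsilon})\,\nabla\varphi_{\delta}(Y_{\delta}),
\]
which follows from $\nabla\varphi_{\varepsilon}(Y_{\varepsilon})\in\partial\varphi(J_{\varepsilon}Y_{\varepsilon})$, $J_{\varepsilon}Y_{\varepsilon}-Y_{\varepsilon}=-\varepsilon\nabla\varphi_{\varepsilon}(Y_{\varepsilon})$, and the monotonicity of $\partial\varphi$. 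Combined with the uniform $L^{2}$ bound on $U_{\varepsilon}$, this term is $O(\varepsilon+\delta)$, so $\{(Y_{\varepsilon},Z_{\varepsilon})\}$ is Cauchy and converges to some $(Y,Z)$ in the weighted norms, while $U_{\varepsilon}\rightharpoonup U$ weakly in $L^{2}$ along a subsequence. One then passes to the limit in the integral identity (the divergence integral being treated exactly as in the proof of Theorem \ref{main result Hu&Peng}, via Definition \ref{divergence operator} and weak/strong convergence) to obtain $(a_{3})$, and identifies the inclusion: since $|J_{\varepsilon}Y_{\varepsilon}-Y_{\varepsilon}|=\varepsilon|U_{\varepsilon}|\to0$ gives $J_{\varepsilon}Y_{\varepsilon}\to Y$ while $U_{\varepsilon}\in\partial\varphi(J_{\varepsilon}Y_{\varepsilon})$, the demiclosedness of the maximal monotone graph yields $(Y(t),U(t))\in\partial\varphi$, i.e. $(a_{2})$. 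The uniform estimates place $(Y,U,Z)$ in the spaces required by $(a_{1})$.

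The main obstacle is that, in the fractional setting, the It\^{o} formula carries the extra Malliavin/trace term $\int u_{s}\mathbb{D}_{s}^{H}X_{s}\,ds$ and, unlike the Brownian case, the solution $Y$ need not have continuous trajectories (see the remark following Theorem \ref{main result Hu&Peng}). Consequently all estimates and limits must be carried out in the weighted $L^{2}$-in-time norms $\Vert\cdot\Vert_{\alpha}$ rather than pathwise, and the genuinely delicate point is securing the uniform bound on $\mathbb{E}\int_{0}^{T}|\nabla\varphi_{\varepsilon}(Y_{\varepsilon})|^{2}\,ds$ through the It\^{o} formula for $\varphi_{\varepsilon}(Y_{\varepsilon})$ and controlling the cross terms against the $s^{2H-1}$ weight so that the Cauchy estimate closes in the spaces of the definition.
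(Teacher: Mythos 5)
Your overall strategy -- Moreau--Yosida penalization, uniform a priori bounds, the Cauchy estimate via property (\ref{property of subdifferential operator}-e), weak convergence of $U^{\varepsilon}=\nabla\varphi_{\varepsilon}(Y^{\varepsilon})$ and demiclosedness of $\partial\varphi$ -- is exactly the route the paper takes. However, there are two concrete gaps. First, your opening step does not go through as stated: assumption $(H_{3})$ requires the driver to belong to $C^{0,1}_{pol}$, i.e.\ to be continuously differentiable in the spatial variables, whereas $\nabla\varphi_{\varepsilon}$ is only guaranteed to be Lipschitz (the Moreau--Yosida regularization $\varphi_{\varepsilon}$ is $C^{1}$, not $C^{2}$). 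So $f-\nabla\varphi_{\varepsilon}$ need not satisfy $(H_{3})$ and Theorem \ref{main result Hu&Peng} cannot be invoked directly for the penalized equation. The paper repairs this by convolving $\nabla\varphi_{\varepsilon}$ with a Gaussian kernel to get $(\nabla\varphi_{\varepsilon})^{\alpha}\in C^{1}_{pol}$, solving the doubly regularized BSDE, and performing an additional limit $\alpha\to 0$ (with its own Gronwall estimate of order $|\alpha_{1}-\alpha_{2}|^{2}$) before ever letting $\varepsilon\to 0$. This extra layer is not cosmetic; without it the existence of $(Y^{\varepsilon},Z^{\varepsilon})$ is not established.

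Second, the unweighted bound $\mathbb{E}\int_{0}^{T}|\nabla\varphi_{\varepsilon}(Y_{\varepsilon}(s))|^{2}\,ds\le C$ that you claim is not what the method delivers, and it is not needed. Applying the It\^{o} formula to $\varphi_{\varepsilon}(Y_{\varepsilon})$ produces cross terms that must be absorbed against the $Z$-estimate, which is only available in the weighted form $\mathbb{E}\int s^{2H-1}|Z^{\varepsilon}(s)|^{2}ds\le C$; accordingly the paper applies the (suitably smoothed) It\^{o} formula to $s^{2H-1}\psi(Y(s))$ and obtains only $\mathbb{E}\int_{t}^{T}s^{2H-1}|\nabla\varphi_{\varepsilon}(Y^{\varepsilon}(s))|^{2}ds\le C\,\Gamma_{2}(T)$. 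This weight then propagates: the Cauchy estimate closes in $\sup_{s}s^{2H-1}\mathbb{E}|\Delta Y|^{2}$ and $\int s^{2(2H-1)}\mathbb{E}|\Delta Z|^{2}ds$, and the limit solution lives in $\bar{\mathcal{V}}_{T}^{H}\times\bar{\mathcal{V}}_{T}^{2H-1/2}$ for $(Y,Z)$ with $U\in\bar{\mathcal{V}}_{T}^{H}$, which is precisely how the definition of a solution of the BSVI is calibrated. You gesture at this issue in your final paragraph, but the estimate you actually wrote down is stronger than what is provable by this argument and should be replaced by its weighted version throughout.
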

\subsection{A priori estimates}
We consider the penalized BSDE by using the Moreau$-$Yosida approximation of $\varphi$:
\begin{equation}\label{penalized fBSVI}
Y^{\varepsilon}(t)+\int_{t}^{T}\nabla\varphi_{\varepsilon}(Y^{\varepsilon
}(s))ds=\xi+\int_{t}^{T}f\left(  s,\eta(s),Y^{\varepsilon}(s),Z^{\varepsilon
}(s)\right)  ds-\int_{t}^{T}Z^{\varepsilon}(s)\delta B^{H}\left(  s\right).
\end{equation}
Recall that the regularization $\varphi_{\varepsilon}$ of $\varphi$ is defined by:
\[
\varphi_{\varepsilon}(u):=\inf\left\{  \dfrac{1}{2\varepsilon}|u-v|^{2}+\varphi(v):v\in\mathbb{R}\right\},~u\in\mathbb{R},~\varepsilon>0.
\]
It is well known that $\varphi_{\varepsilon}$ is a convex function of class $C^{1}$ on $\mathbb{R}$ and
its gradient $\nabla\varphi_{\varepsilon}$ is a Lipschitz function with Lipschitz constant $1/\varepsilon$. Let
\[
J_{\varepsilon}u=u-\varepsilon\nabla\varphi_{\varepsilon}(u),~u\in\mathbb{R}.
\]
For all $u,v\in\mathbb{R}$ and $\varepsilon,\delta>0$, the following properties hold true (see \cite{B-76} and \cite{PR-98}).
\begin{equation*}\label{property of subdifferential operator}
\begin{array}
[c]{ll}
\left(  a\right)   & \varphi_{\varepsilon}(u)=\dfrac{\varepsilon}{2}|\nabla\varphi_{\varepsilon}(u)|^{2}+\varphi(J_{\varepsilon}u),\medskip\\
\left(  b\right)   & \left\vert J_{\varepsilon}u-J_{\varepsilon}v\right\vert\leq|u-v|,\medskip\\
\left(  c\right)   & \nabla\varphi_{\varepsilon}(u)\in\partial\varphi(J_{\varepsilon}u),\medskip\\
\left(  d\right)   & 0\leq\varphi_{\varepsilon}(u)\leq u\nabla\varphi_{\varepsilon}(u),\medskip\\
\left(  e\right)   & \left(  \nabla\varphi_{\varepsilon}(u)-\nabla\varphi_{\delta}(v)\right)  (u-v)\geq-\left(  \varepsilon+\delta\right)
\nabla\varphi_{\varepsilon}(u)\nabla\varphi_{\delta}(v).
\end{array}
\end{equation*}
\begin{theorem}\label{Theorem 1 for fBSVI}
Let the assumptions $(H_{1})$-$(H_{5})$ be satisfied. Then, for all $\varepsilon>0$, the penalized BSDE (\ref{penalized fBSVI}) has a solution $(Y^{\varepsilon},Z^{\varepsilon})\in\mathcal{\bar{V}}_{T}^{1/2}\times\mathcal{\bar{V}}_{T}^{H}$ such that,  for $t\in(0,T]$,
\begin{equation}\label{equation for penelized Y and Z}
\begin{array}
[c]{l}
\mathbb{E}|Y^{\varepsilon}(t)|^{2}+2\mathbb{E}{\displaystyle\int_{t}^{T}}\dfrac{\hat{\sigma}(s)}{\sigma(s)}| Z^{\varepsilon}(s)|^{2}ds
=\mathbb{E}|\xi|^{2}
+2\mathbb{E}{\displaystyle\int_{t}^{T}}Y^{\varepsilon}(s)f(s,\eta(s),Y^{\varepsilon}(s),Z^{\varepsilon}(s))ds\medskip\\
\qquad\qquad\qquad\qquad\qquad\qquad
-2\mathbb{E}{\displaystyle\int_{t}^{T}}Y^{\varepsilon}(s)\nabla\varphi_{\varepsilon}(Y^{\varepsilon}(s))ds.
\end{array}
\end{equation}
\end{theorem}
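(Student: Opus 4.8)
The plan is to recognise (\ref{penalized fBSVI}) as a BSDE of the type solved in Theorem \ref{main result Hu&Peng}, with the modified generator
\[
f_{\varepsilon}(t,x,y,z):=f(t,x,y,z)-\nabla\varphi_{\varepsilon}(y),
\]
and then to extract the energy identity (\ref{equation for penelized Y and Z}) from the It\^{o} formula. Since $\nabla\varphi_{\varepsilon}$ is Lipschitz with constant $1/\varepsilon$ and independent of $z$, the function $f_{\varepsilon}$ still obeys the Lipschitz bound of $(H_{3})$, now with constant $L+1/\varepsilon$, while $(H_{1})$, $(H_{2})$, $(H_{4})$ are untouched. The genuine difficulty is that $(H_{3})$ also demands $f_{\varepsilon}\in C_{pol}^{0,1}$, whereas the Moreau envelope $\varphi_{\varepsilon}$ is in general only of class $C^{1}$, so $\nabla\varphi_{\varepsilon}$ is merely Lipschitz and not differentiable. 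This obstructs a direct quotation of Theorem \ref{main result Hu&Peng}, because the construction behind it, namely Proposition \ref{Proposition 4.5_4.3_HP-09}, differentiates the generator in $y$ (through the term $\tilde{g}$), which for $f_{\varepsilon}$ would involve the nonexistent $\nabla^{2}\varphi_{\varepsilon}$.

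To remove this obstruction I would mollify. Choose smooth functions $g_{n}=\nabla\varphi_{\varepsilon}\ast\rho_{n}$ retaining the Lipschitz constant $1/\varepsilon$ and satisfying $\sup_{y\in\mathbb{R}}|g_{n}(y)-\nabla\varphi_{\varepsilon}(y)|\to0$; this uniform convergence is available because $\nabla\varphi_{\varepsilon}$ is globally Lipschitz, hence uniformly continuous. Each $g_{n}$ lies in $C_{pol}^{0,1}(\mathbb{R})$, so $(t,x,y,z)\mapsto f(t,x,y,z)-g_{n}(y)$ belongs to $C_{pol}^{0,1}([0,T]\times\mathbb{R}^{3})$, and Theorem \ref{main result Hu&Peng} produces a solution $(Y^{n},Z^{n})\in\mathcal{\bar{V}}_{T}^{1/2}\times\mathcal{\bar{V}}_{T}^{H}$ of the BSDE with generator $f-g_{n}(\cdot)$, for which Proposition \ref{connection Y, Z} gives $\mathbb{D}_{s}^{H}Y^{n}(s)=\frac{\hat{\sigma}(s)}{\sigma(s)}Z^{n}(s)$.

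Next I would prove that $\{(Y^{n},Z^{n})\}$ is Cauchy. Combining the energy identities for $Y^{n}$ and $Y^{m}$ with the cross identity for $\mathbb{E}[Y^{n}(t)Y^{m}(t)]$ — all obtained from Theorem \ref{Ito formula for divergence type integral} applied to the respective Picard iterates in $\mathcal{V}_{T}$ and then passed to the limit, exactly as in the proof of Theorem \ref{uniqueness theorem} — and using Proposition \ref{connection Y, Z}, one gets for $\Delta Y=Y^{n}-Y^{m}$, $\Delta Z=Z^{n}-Z^{m}$,
\[
\mathbb{E}|\Delta Y(t)|^{2}+2\mathbb{E}\int_{t}^{T}\frac{\hat{\sigma}(s)}{\sigma(s)}|\Delta Z(s)|^{2}ds=2\mathbb{E}\int_{t}^{T}\Delta Y(s)\big[f(s,\eta(s),Y^{n},Z^{n})-f(s,\eta(s),Y^{m},Z^{m})-g_{n}(Y^{n})+g_{m}(Y^{m})\big]ds.
\]
I would split the bracket into the $f$-difference, the term $g_{n}(Y^{n})-g_{n}(Y^{m})$, and the term $(g_{n}-g_{m})(Y^{m})$: the first two are controlled by $L$ and $1/\varepsilon$ (both independent of $n,m$), the $\Delta Z$-contribution being absorbed into the left-hand side by Young's inequality together with $\frac{\hat{\sigma}(s)}{\sigma(s)}\ge\frac{1}{M}s^{2H-1}$ from Remark \ref{remark for sigma}, while the last term is bounded by $\sup_{y}|g_{n}(y)-g_{m}(y)|\to0$. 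The resulting Gronwall argument (run on $[\rho,T]$ and then $\rho\downarrow0$, as in Theorem \ref{uniqueness theorem}) shows that $(Y^{n},Z^{n})$ converges in $\mathcal{\bar{V}}_{T}^{1/2}\times\mathcal{\bar{V}}_{T}^{H}$ to some $(Y^{\varepsilon},Z^{\varepsilon})$ in that closed space.

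Finally I would pass to the limit. Since $g_{n}(Y^{n})\to\nabla\varphi_{\varepsilon}(Y^{\varepsilon})$ in $L^{2}$ and $Z^{n}\mathbf{1}_{[t,T]}\to Z^{\varepsilon}\mathbf{1}_{[t,T]}$ in $L^{2}(\Omega;\mathcal{H})$, the closedness of the divergence operator $\delta$ — used exactly as at the end of the proof of Theorem \ref{main result Hu&Peng} — yields $Z^{\varepsilon}\mathbf{1}_{[t,T]}\in Dom(\delta)$ and that $(Y^{\varepsilon},Z^{\varepsilon})$ solves (\ref{penalized fBSVI}). The identity (\ref{equation for penelized Y and Z}) then follows by writing the It\^{o} energy identity for each $(Y^{n},Z^{n})$ — where Proposition \ref{connection Y, Z} turns the It\^{o} correction $2\int_{t}^{T}Z^{n}\mathbb{D}_{s}^{H}Y^{n}ds$ into $2\int_{t}^{T}\frac{\hat{\sigma}(s)}{\sigma(s)}|Z^{n}|^{2}ds$ and $f-g_{n}$ splits into $f$ and $g_{n}$ — and letting $n\to\infty$. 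The two coupled points I expect to be the crux are the loss of $C^{1}$-regularity of $\nabla\varphi_{\varepsilon}$, which makes the mollification layer unavoidable, and the fact that Theorem \ref{Ito formula for divergence type integral} only covers integrands in $\mathcal{V}_{T}$, so the energy identity must be established on approximating objects in $\mathcal{V}_{T}$ and stabilised under the $\mathcal{\bar{V}}_{T}$-limit.
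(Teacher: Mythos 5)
Your proposal is correct and follows essentially the same route as the paper: the authors likewise identify the lack of $C^{1}$-regularity of $\nabla\varphi_{\varepsilon}$ as the obstruction, mollify it (with a Gaussian kernel, writing $(\nabla\varphi_{\varepsilon})^{\alpha}$ and exploiting the uniform bound $|(\nabla\varphi_{\varepsilon})^{\alpha_{1}}(x_{1})-(\nabla\varphi_{\varepsilon})^{\alpha_{2}}(x_{2})|\leq\frac{1}{\varepsilon}|x_{1}-x_{2}|+\frac{1}{\varepsilon}\sqrt{2/\pi}\,|\alpha_{1}-\alpha_{2}|$), solve the mollified BSDEs via the existence/uniqueness theory already established, run a Gronwall argument on $[\rho,T]$ to show the family is Cauchy as the mollification parameter vanishes, and pass to the limit using the closedness of the divergence operator and the energy identities inherited from the Picard approximants in $\mathcal{V}_{T}$. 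The only cosmetic difference is that you quote Theorem \ref{main result Hu&Peng} plus Proposition \ref{connection Y, Z} where the paper quotes Theorem \ref{uniqueness theorem} (whose class $\mathcal{S}_{f}$ already packages the energy and cross identities), but the underlying computations are identical.
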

\begin{proof}
In order to use Theorem \ref{uniqueness theorem}, we mollify $\nabla\varphi_{\varepsilon}$ in a standard way:
\[
\left(\nabla\varphi_{\varepsilon}\right)^{\alpha}(x):={\displaystyle\int_{\mathbb{R}}}\nabla\varphi_{\varepsilon}(x-\alpha u)\lambda(u)du,~x\in\mathbb{R}, \text{ where } \lambda(u)=\dfrac{1}{\sqrt{2\pi}}e^{-\frac{u^{2}}{2}},~u\in\mathbb{R}.
\]
Considering that $\varphi_{\varepsilon}$ is convex and $\nabla\varphi_{\varepsilon}$ is Lipschitz continuous with Lipschitz constant $1/\varepsilon$, $\left(\nabla\varphi_{\varepsilon}\right)^{\alpha}$ has the following properties for $x_{1},x_{2}\in\mathbb{R}$ and $\alpha,\alpha_{1},\alpha_{2}>0$:
\begin{itemize}
\item[$(i)$]$\left(\nabla\varphi_{\varepsilon}\right)^{\alpha}$ belongs to $C^{1}_{pol}(\mathbb{R})$, and is convex;
\item[$(ii)$]$\left|\left(\nabla\varphi_{\varepsilon}\right)^{\alpha_{1}}(x_{1})-\left(\nabla\varphi_{\varepsilon}\right)^{\alpha_{2}}(x_{2})\right|
\leq\dfrac{1}{\varepsilon}|x_{1}-x_{2}|+\dfrac{1}{\varepsilon}\sqrt{\dfrac{2}{\pi}}|\alpha_{1}-\alpha_{2}|.$
\end{itemize}
Now, we consider the following mollified BSDE
\begin{equation}\label{mollified penalized fBSVI}
Y^{\varepsilon,\alpha}(t)+\int_{t}^{T}\left(\nabla\varphi_{\varepsilon}\right)^{\alpha}(Y^{\varepsilon,\alpha}(s))ds=\xi+\int_{t}^{T}f\left(  s,\eta(s),Y^{\varepsilon,\alpha}(s),Z^{\varepsilon,\alpha}(s)\right)ds-\int_{t}^{T}Z^{\varepsilon,\alpha}(s)\delta B^{H}\left(  s\right).
\end{equation}
From Theorem \ref{uniqueness theorem}, we obtain that (\ref{mollified penalized fBSVI}) admits a unique solution
$\left(Y^{\varepsilon,\alpha},Z^{\varepsilon,\alpha}\right)$ in $\mathcal{S}_{f,\varepsilon,\alpha}:=\mathcal{S}_{f-\left(\nabla\varphi_{\varepsilon}\right)^{\alpha}}\subset\mathcal{\bar{V}}_{T}^{1/2}\times
\mathcal{\bar{V}}_{T}^{H}$,
This solution $\left(Y^{\varepsilon,\alpha},Z^{\varepsilon,\alpha}\right)$  can be approximated by the sequence $(Y^{k,\varepsilon,\alpha},Z^{k,\varepsilon,\alpha})\in\mathcal{V}_{T}\times\mathcal{V}_{T}$, $k\ge0$ constructed by the following method:
Define $(Y^{k,\varepsilon,\alpha},Z^{k,\varepsilon,\alpha})$, $k\ge0$ recursively: $Y^{0,\varepsilon,\alpha}=\chi(t,\eta(t))$,
$Z^{0,\varepsilon,\alpha}=\psi(t,\eta(t))$ for $\chi,\psi\in C_{pol}^{1,3}([0,T]\times\mathbb{R})$ with
$\dfrac{\partial \chi}{\partial t},\dfrac{\partial \psi}{\partial t}\in C_{pol}^{0,1}([0,T]\times\mathbb{R})$, and let $(Y^{k+1,\varepsilon,\alpha},Z^{k+1,\varepsilon,\alpha})\in\mathcal{V}_{T}\times\mathcal{V}_{T}$ be the unique solution of the BSDE
\begin{equation}\label{approximate BSDE 1}
\begin{array}
[c]{l}
Y^{k+1,\varepsilon,\alpha}(t)+{\displaystyle\int_{t}^{T}}\left(\nabla\varphi_{\varepsilon}\right)^{\alpha}(Y^{k,\varepsilon,\alpha}(s))ds=\xi+
{\displaystyle\int_{t}^{T}}f\left(  s,\eta(s),Y^{k,\varepsilon,\alpha}(s),Z^{k,\varepsilon,\alpha}(s)\right)  ds\medskip\\
\qquad\qquad\qquad\qquad\qquad
-{\displaystyle\int_{t}^{T}}Z^{k+1,\varepsilon,\alpha}(s)\delta B^{H}(s),~t\in[0,T].
\end{array}
\end{equation}
Similar to (\ref{convergent property}), we have
\begin{equation}\label{convergence property 2}
\begin{array}
[c]{ll}
\lim\limits_{k\rightarrow\infty}\mathbb{E}|Y^{k,\varepsilon,\alpha}(t)-Y^{\varepsilon,\alpha}(t)|^{2}=0,~t\in[0,T],\text{ and } \medskip\\
\lim\limits_{k\rightarrow\infty}\mathbb{E}{\displaystyle\int_{0}^{T}}|Y^{k,\varepsilon,\alpha}(s)-Y^{\varepsilon,\alpha}(s)|^{2}ds
+\mathbb{E}{\displaystyle\int_{0}^{T}}s^{2H-1}|Z^{k,\varepsilon,\alpha}(s)-Z^{\varepsilon,\alpha}(s)|^{2}ds=0.
\end{array}
\end{equation}
Moreover, analogously to (\ref{equation for Y}), we show that for arbitrary $\rho>0$,
\begin{equation}\label{mollify equation for Y}
\begin{array}
[c]{l}
\mathbb{E}|Y^{\varepsilon,\alpha}(t)|^{2}+2\mathbb{E}{\displaystyle\int_{t}^{T}}{\displaystyle\dfrac{\hat{\sigma}(s)}{\sigma(s)}}
|Z^{\varepsilon,\alpha}(s)|^{2}ds=\mathbb{E}|\xi|^{2}
-2\mathbb{E}{\displaystyle\int_{t}^{T}}Y^{\varepsilon,\alpha}(s)\left(\nabla\varphi_{\varepsilon}\right)^{\alpha}(Y^{\varepsilon,\alpha}(s))ds\medskip\\
\qquad\qquad\qquad\qquad
+2\mathbb{E}{\displaystyle\int_{t}^{T}}Y^{\varepsilon,\alpha}(s)f(s,\eta(s),Y^{\varepsilon,\alpha}(s),Z^{\varepsilon,\alpha}(s))ds,
~t\in[\rho,T],
\end{array}
\end{equation}
and
\[
\begin{array}
[c]{l}
\mathbb{E}|\Delta Y^{\varepsilon,\alpha_{1},\alpha_{2}}(t)|^{2}+2\mathbb{E}{\displaystyle\int_{t}^{T}}{\displaystyle\dfrac{\hat{\sigma}(s)}{\sigma(s)}}
|\Delta Z^{\varepsilon,\alpha_{1},\alpha_{2}}(s)|^{2}ds
=2\mathbb{E}{\displaystyle\int_{t}^{T}}\Delta Y^{\varepsilon,\alpha_{1},\alpha_{2}}(s)\Delta f^{\varepsilon,\alpha_{1},\alpha_{2}}(s)ds\medskip\\
-2\mathbb{E}{\displaystyle\int_{t}^{T}}\Delta Y^{\varepsilon,\alpha_{1},\alpha_{2}}(s)\left(
\left(\nabla\varphi_{\varepsilon}\right)^{\alpha_{1}}(Y^{\varepsilon,\alpha_{1}}(s))
-\left(\nabla\varphi_{\varepsilon}\right)^{\alpha_{2}}(Y^{\varepsilon,\alpha_{2}}(s))\right)ds,
~t\in[\rho,T],
\end{array}
\]
where $\Delta Y^{\varepsilon,\alpha_{1},\alpha_{2}}(s)=Y^{\varepsilon,\alpha_{1}}(s)-Y^{\varepsilon,\alpha_{2}}(s)$,
~$\Delta Z^{\varepsilon,\alpha_{1},\alpha_{2}}(s)=Z^{\varepsilon,\alpha_{1}}(s)-Z^{\varepsilon,\alpha_{2}}(s)$ and\\
$\Delta f^{\varepsilon,\alpha_{1},\alpha_{2}}(s)=f(s,\eta(s),Y^{\varepsilon,\alpha_{1}}(s),Z^{\varepsilon,\alpha_{1}}(s))
-f(s,\eta(s),Y^{\varepsilon,\alpha_{2}}(s),Z^{\varepsilon,\alpha_{2}}(s)),~s\in[0,T]$.\\
Then
\[
\begin{array}
[c]{l}
\mathbb{E}|\Delta Y^{\varepsilon,\alpha_{1},\alpha_{2}}(t)|^{2}+2\mathbb{E}{\displaystyle\int_{t}^{T}}{\displaystyle\dfrac{\hat{\sigma}(s)}{\sigma(s)}}
|\Delta Z^{\varepsilon,\alpha_{1},\alpha_{2}}(s)|^{2}ds
\leq\mathbb{E}{\displaystyle\int_{t}^{T}}(2L+L^{2}Ms^{1-2H})|\Delta Y^{\varepsilon,\alpha_{1},\alpha_{2}}(s)|^{2}ds\medskip\\
+\mathbb{E}{\displaystyle\int_{t}^{T}}\dfrac{1}{M}s^{2H-1}|\Delta Z^{\varepsilon,\alpha_{1},\alpha_{2}}(s)|^{2}ds
+\dfrac{3}{\varepsilon}\mathbb{E}{\displaystyle\int_{t}^{T}}|\Delta Y^{\varepsilon,\alpha_{1},\alpha_{2}}(s)|^{2}ds+\dfrac{2T}{\pi}|\alpha_{1}-\alpha_{2}|^{2},
~t\in[\rho,T],
\end{array}
\]
(by using the property for $\left(\nabla\varphi_{\varepsilon}\right)^{\alpha}$) where $M$ is the constant given by Remark \ref{remark for sigma}. Then, using (\ref{properties for sigma}) we obtain
\[
\begin{array}
[c]{l}
\mathbb{E}|\Delta Y^{\varepsilon,\alpha_{1},\alpha_{2}}(t)|^{2}+\dfrac{1}{M}\mathbb{E}{\displaystyle\int_{t}^{T}}s^{2H-1}|\Delta Z^{\varepsilon,\alpha_{1},\alpha_{2}}(s)|^{2}ds\medskip\\
\leq\dfrac{2T}{\pi}|\alpha_{1}-\alpha_{2}|^{2}+\mathbb{E}{\displaystyle\int_{t}^{T}}(2L+\dfrac{3}{\varepsilon}+L^{2}Ms^{1-2H})|\Delta Y^{\varepsilon,\alpha_{1},\alpha_{2}}(s)|^{2}ds,
~t\in[\rho,T],
\end{array}
\]
and Gronwall's inequality yields that
\[
\begin{array}
[c]{l}
\mathbb{E}|\Delta Y^{\varepsilon,\alpha_{1},\alpha_{2}}(t)|^{2}+\dfrac{1}{M}\mathbb{E}{\displaystyle\int_{t}^{T}}s^{2H-1}|\Delta Z^{\varepsilon,\alpha_{1},\alpha_{2}}(s)|^{2}ds\leq M_{\varepsilon,L,T}|\alpha_{1}-\alpha_{2}|^{2},
~t\in[\rho,T],
\end{array}
\]
where $M_{\varepsilon,L,T}$ is a constant depending only on $\varepsilon,L,T$ but independent of $\rho>0$. Consequently, taking into account the arbitrariness of $\rho>0$, there exists a couple of processes $(Y^{\varepsilon},Z^{\varepsilon})$ with $Y^{\varepsilon}\mathbf{1}_{[\rho,T]}\in \bar{\mathcal{V}}_{T}^{1/2}$, $Z^{\varepsilon}\mathbf{1}_{[\rho,T]}\in \bar{\mathcal{V}}_{T}^{H}$ for all $\rho>0$, such that
\begin{equation}\label{equation 6}
\begin{array}
[c]{l}
\lim\limits_{\alpha\rightarrow0}\mathbb{E}|Y^{\varepsilon,\alpha}(t)-Y^{\varepsilon}(t)|^{2}=0, \text{ for all }t\in[\rho,T],\medskip\\
\lim\limits_{\alpha\rightarrow0}\mathbb{E}{\displaystyle\int_{t}^{T}}|Y^{\varepsilon,\alpha}(s)-Y^{\varepsilon}(s)|^{2}ds
+\mathbb{E}{\displaystyle\int_{t}^{T}}s^{2H-1}|Z^{\varepsilon,\alpha}(s)-Z^{\varepsilon}(s)|^{2}ds=0, \text{ for all } t\in[\rho,T],
\end{array}
\end{equation}
Now let $\alpha\rightarrow0$, and by using (\ref{mollified penalized fBSVI}) and a similar discussion as in Theorem \ref{main result Hu&Peng}, we obtain that $Z^{\varepsilon}\mathbf{1}_{[t,T]}\in Dom(\delta)$, $t\in(0,T]$, and
\[
Y^{\varepsilon}(t)+\int_{t}^{T}\nabla\varphi_{\varepsilon}(Y^{\varepsilon}(s))ds
=\xi+\int_{t}^{T}f\left(  s,\eta(s),Y^{\varepsilon}(s),Z^{\varepsilon}(s)\right)ds-\int_{t}^{T}Z^{\varepsilon}(s)\delta B^{H}(s),~t\in(0,T].
\]
Moreover, taking $\alpha\rightarrow0$ in (\ref{mollify equation for Y}) yields that
\[
\begin{array}
[c]{l}
\mathbb{E}|Y^{\varepsilon}(t)|^{2}+2\mathbb{E}{\displaystyle\int_{t}^{T}}\dfrac{\hat{\sigma}(s)}{\sigma(s)}| Z^{\varepsilon}(s)|^{2}ds
=\mathbb{E}|\xi|^{2}
+2\mathbb{E}{\displaystyle\int_{t}^{T}}Y^{\varepsilon}(s)f(s,\eta(s),Y^{\varepsilon}(s),Z^{\varepsilon}(s))ds\medskip\\
\qquad\qquad\qquad\qquad\qquad\qquad
-2\mathbb{E}{\displaystyle\int_{t}^{T}}Y^{\varepsilon}(s)\nabla\varphi_{\varepsilon}(Y^{\varepsilon}(s))ds,
~t\in(0,T].
\end{array}
\]
\hfill
\end{proof}

The next three propositions provide a priori estimates for the sequence $(Y^{\varepsilon},Z^{\varepsilon})$, $\varepsilon>0$.

\begin{proposition}\label{bound of Y and Z}
Let the assumptions $(H_{1})$-$(H_{5})$ be
satisfied.  Let $(Y^{\varepsilon},Z^{\varepsilon})$ be the solution constructed in the proof of Theorem \ref{Theorem 1 for fBSVI}.
Then there exists a positive constant $C$ independent of $\varepsilon>0$, such that, for all $t\in[0,T]$,
\[
\mathbb{E}|Y^{\varepsilon}(t)|^{2}+\mathbb{E}\int_{t}^{T}s^{2H-1}%
|Z^{\varepsilon}(s)|^{2}ds\leq C~\Gamma_{1}(T),
\]
where $\Gamma_{1}(T)=\mathbb{E}\big[|\xi|^{2}+{\displaystyle\int_{0}^{T}}|\eta(s)|^{2}ds+{\displaystyle\int_{0}^{T}}|f(s,0,0,0)|^{2}ds\big]$.
\end{proposition}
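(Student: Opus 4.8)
The plan is to read the estimate off the energy identity (\ref{equation for penelized Y and Z}) established in Theorem \ref{Theorem 1 for fBSVI}, exploiting that the penalization term enters with a favorable sign. For $t\in(0,T]$ this identity reads
\[
\begin{array}{l}
\mathbb{E}|Y^{\varepsilon}(t)|^{2}+2\mathbb{E}{\displaystyle\int_{t}^{T}}\dfrac{\hat{\sigma}(s)}{\sigma(s)}|Z^{\varepsilon}(s)|^{2}ds
=\mathbb{E}|\xi|^{2}+2\mathbb{E}{\displaystyle\int_{t}^{T}}Y^{\varepsilon}(s)f(s,\eta(s),Y^{\varepsilon}(s),Z^{\varepsilon}(s))ds\medskip\\
\qquad\qquad\qquad\qquad\qquad\qquad
-2\mathbb{E}{\displaystyle\int_{t}^{T}}Y^{\varepsilon}(s)\nabla\varphi_{\varepsilon}(Y^{\varepsilon}(s))ds.
\end{array}
\]
The crucial observation is that, by property $(d)$ of (\ref{property of subdifferential operator}), $Y^{\varepsilon}(s)\nabla\varphi_{\varepsilon}(Y^{\varepsilon}(s))\geq\varphi_{\varepsilon}(Y^{\varepsilon}(s))\geq0$, so the last term is nonpositive and can simply be discarded. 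This is precisely what makes the resulting bound independent of $\varepsilon$, which is the whole point of the statement.

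Next I would estimate the driver using $(H_{3})$: writing $f_{0}(s):=f(s,\eta(s),0,0)$, the Lipschitz property gives $|f(s,\eta(s),Y^{\varepsilon},Z^{\varepsilon})|\leq|f_{0}(s)|+L|Y^{\varepsilon}|+L|Z^{\varepsilon}|$. Inserting this and applying Young's inequality, the only delicate term is the cross term $2L|Y^{\varepsilon}||Z^{\varepsilon}|$, which I would split as $\tfrac{1}{M}s^{2H-1}|Z^{\varepsilon}|^{2}+ML^{2}s^{1-2H}|Y^{\varepsilon}|^{2}$. Using the lower bound $\tfrac{\hat{\sigma}(s)}{\sigma(s)}\geq\tfrac{1}{M}s^{2H-1}$ from part $(b)$ of Remark \ref{remark for sigma} (see (\ref{properties for sigma})), the $Z$-part of this splitting is absorbed into the left-hand side, leaving
\[
\begin{array}{l}
\mathbb{E}|Y^{\varepsilon}(t)|^{2}+\dfrac{1}{M}\mathbb{E}{\displaystyle\int_{t}^{T}}s^{2H-1}|Z^{\varepsilon}(s)|^{2}ds
\leq \mathbb{E}|\xi|^{2}+\mathbb{E}{\displaystyle\int_{t}^{T}}|f_{0}(s)|^{2}ds\medskip\\
\qquad\qquad\qquad\qquad
+\mathbb{E}{\displaystyle\int_{t}^{T}}\big(1+2L+ML^{2}s^{1-2H}\big)|Y^{\varepsilon}(s)|^{2}ds.
\end{array}
\]

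The third step is a backward Gronwall argument applied to $\phi(t):=\mathbb{E}|Y^{\varepsilon}(t)|^{2}$. Here the one thing to check is that the coefficient $g(s):=1+2L+ML^{2}s^{1-2H}$ is integrable on $[0,T]$; this holds precisely because $H<1$ forces $1-2H>-1$, so that $s^{1-2H}\in L^{1}(0,T)$. Gronwall then yields $\phi(t)\leq A\exp\big(\int_{0}^{T}g(s)ds\big)$ with $A:=\mathbb{E}|\xi|^{2}+\mathbb{E}\int_{0}^{T}|f_{0}(s)|^{2}ds$, and feeding this back into the previous display controls the weighted $Z$-term by a constant multiple of $A$ as well; all constants depend only on $L,M,T,H$ and never on $\varepsilon$. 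The estimate at $t=0$ then follows by letting $t\downarrow0$ and using the $L^{2}$-continuity of $Y^{\varepsilon}$.

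It remains to reduce $A$ to $\Gamma_{1}(T)$, which is the point requiring the most care: the quantity $\mathbb{E}\int_{0}^{T}|f(s,\eta(s),0,0)|^{2}ds$ must be dominated by $C\,\Gamma_{1}(T)$. I would do this through the regularity and growth of $f$ in its spatial argument together with the moment bounds (\ref{Lp estimates}) for $\eta$, which express $\mathbb{E}|f(s,\eta(s),0,0)|^{2}$ via $\mathbb{E}|f(s,0,0,0)|^{2}$ and the (finite) moments of $\eta(s)$; absorbing these into the three quantities defining $\Gamma_{1}(T)$ finishes the argument. The main obstacle is not any individual estimate but the $\varepsilon$-uniformity, which rests entirely on the favorable sign of the penalization term noted in the first step and on keeping the singular weight $s^{2H-1}$ and its reciprocal inside the integrable range so that Gronwall applies.
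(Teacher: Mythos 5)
Your argument is correct and follows essentially the same route as the paper: drop the penalization term by its favorable sign (property $(d)$ of (\ref{property of subdifferential operator})), bound the driver via $(H_3)$, absorb the $Z$-cross term into the left-hand side using $\hat{\sigma}(s)/\sigma(s)\geq s^{2H-1}/M$, and close with a backward Gronwall argument against the integrable weight $s^{1-2H}$. The only difference is bookkeeping: the paper splits off the $\eta$-dependence of $f$ at the very first step, writing $|f(s,\eta(s),Y,Z)|\leq|f(s,0,0,0)|+L|\eta(s)|+L|Y|+L|Z|$, whereas you carry $f(s,\eta(s),0,0)$ through Gronwall and reduce to $\Gamma_1(T)$ at the end --- both reductions rest on the same (implicit) linear-growth control of $f$ in its spatial argument, so the two proofs are substantively identical.
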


\begin{proof}
From (\ref{equation for penelized Y and Z}), (\ref{properties for sigma}-$b$) and $u\nabla\varphi_{\varepsilon}(u)\geq0$, for all $u\in\mathbb{R}$,
we have, for $t\in(0,T]$,
\[
\mathbb{E}\left\vert Y^{\varepsilon}(t)\right\vert ^{2}+\dfrac{2}{M}\mathbb{E}{\int_{t}^{T}}s^{2H-1}\left\vert Z^{\varepsilon}(s)\right\vert^{2}ds
\leq\mathbb{E}\left\vert \xi\right\vert ^{2}+2{\int_{t}^{T}}
\mathbb{E}\left[  Y^{\varepsilon}(s)f\left(s,\eta(s),Y^{\varepsilon}(s),Z^{\varepsilon}(s)\right)\right]ds.
\]
On the other hand, from assumption $(H_{3})$ and Schwartz's inequality, we obtain
\[
\begin{array}
[c]{l}
2Y^{\varepsilon}(s)f(s,\eta(s),Y^{\varepsilon}(s),Z^{\varepsilon}
(s))\medskip\\
\quad\leq2\left\vert Y^{\varepsilon}(s)\right\vert \big(\left\vert f\left(s,0,0,0\right)\right\vert
+L\left\vert \eta(s)\right\vert +L\left\vert Y^{\varepsilon}(s)\right\vert +L\left\vert Z^{\varepsilon}(s)\right\vert \big)\medskip\\
\quad\leq\left\vert f\left( s,0,0,0\right)  \right\vert ^{2}+
\left\vert \eta(s)\right\vert ^{2}+\left(1+L^{2}+2L+L^{2}M\dfrac{1}{s^{2H-1}}\right)\left\vert Y^{\varepsilon}(s)\right\vert ^{2}
+\dfrac{1}{M}s^{2H-1}\left\vert Z^{\varepsilon}(s)\right\vert ^{2}.
\end{array}
\]
Then,
\[
\begin{array}
[c]{r}
\mathbb{E}\left\vert Y^{\varepsilon}(t)\right\vert ^{2}
+\dfrac{1}{M}\mathbb{E}\displaystyle{\int_{t}^{T}}s^{2H-1}\left\vert Z^{\varepsilon}(s)\right\vert^{2}ds
\leq\mathbb{E}|\xi|^{2}+{\displaystyle\int_{t}^{T}}\mathbb{E}\left\vert f\left(s,0,0,0\right)\right\vert ^{2}ds
+{\displaystyle\int_{t}^{T}}\mathbb{E}\left\vert \eta(s)\right\vert ^{2}ds\medskip\\
+{\displaystyle\int_{t}^{T}}\left(  1+L^{2}+2L+L^{2}M\dfrac{1}{s^{2H-1}}\right)\mathbb{E}\left\vert Y^{\varepsilon}(s)\right\vert ^{2}ds.
\end{array}
\]
Therefore, by Gronwall's inequality, we deduce
that
\[
\begin{array}
[c]{l}
\mathbb{E}\left\vert Y^{\varepsilon}(t)\right\vert ^{2}
+\dfrac{1}{M}\mathbb{E}\displaystyle{\int_{t}^{T}}s^{2H-1}\left\vert Z^{\varepsilon}(s)\right\vert ^{2}ds\medskip\\
\quad\leq\Gamma_{1}(T)\exp\bigg[\left(  1+L^{2}+2L\right)  \left(
T-t\right)  +L^{2}M\dfrac{T^{2-2H}-t^{2-2H}}{2-2H}\bigg],
\end{array}
\]
which completes the proof.\hfill
\end{proof}

\begin{proposition}\label{bound of approxim phi}
Let the assumptions $(H_{1})$-$(H_{5})$ be satisfied. Then there exists a positive constant $C$ such that, for all $t\in[0,T]$,
\[
\begin{array}
[c]{rl}
(i) & \mathbb{E}{\displaystyle\int_{t}^{T}}s^{2H-1}\left\vert \nabla\varphi_{\varepsilon}\left(Y^{\varepsilon}(s)\right)\right\vert^{2}ds
\leq C~\Gamma_{2}(T),\medskip\\
(ii) & t^{2H-1}\mathbb{E}\left[\varphi\left( J_{\varepsilon}\left( Y^{\varepsilon}(t)\right)\right)\right]\leq C~\Gamma_{2}(T),\medskip\\
(iii) & t^{2H-1}\mathbb{E}\left[\left\vert Y^{\varepsilon}(t)-J_{\varepsilon}\left(Y^{\varepsilon}(t)\right)\right\vert ^{2}\right]
\leq\varepsilon C~\Gamma_{2}(T),
\end{array}
\]
where $\Gamma_{2}(T)=\mathbb{E}\big[\left\vert \xi\right\vert ^{2}
+\varphi(\xi)+{\displaystyle\int_{0}^{T}}|\eta(s)|^{2}ds+{\displaystyle\int_{0}^{T}}\left\vert f(s,0,0,0)\right\vert ^{2}ds\big]$.
\end{proposition}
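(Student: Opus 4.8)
The plan is to derive all three bounds from a single \emph{weighted} energy identity obtained by applying the divergence--type It\^{o} formula to the map $s\mapsto s^{2H-1}\varphi_{\varepsilon}(Y^{\varepsilon}(s))$. First I would apply the generalized divergence--type It\^{o} formula (Theorem \ref{general Ito formula for the divergence integral} in the Appendix) to $\varphi_{\varepsilon}(Y^{\varepsilon}(s))$; by the penalized analogue of Proposition \ref{connection Y, Z}, $\mathbb{D}_{s}^{H}Y^{\varepsilon}(s)=\frac{\hat{\sigma}(s)}{\sigma(s)}Z^{\varepsilon}(s)$, the It\^{o} correction takes the form $\int_{t}^{T}\varphi_{\varepsilon}''(Y^{\varepsilon}(s))\frac{\hat{\sigma}(s)}{\sigma(s)}|Z^{\varepsilon}(s)|^{2}ds$, in complete analogy with the quadratic case (Theorem \ref{Ito formula for divergence type integral}) used to derive (\ref{equation for penelized Y and Z}). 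Multiplying by the deterministic factor $s^{2H-1}$ and taking expectation on $[t,T]$, $t>0$ --- the divergence integral having zero mean --- yields
\begin{align*}
t^{2H-1}\mathbb{E}\varphi_{\varepsilon}(Y^{\varepsilon}(t))
&+(2H-1)\mathbb{E}\int_{t}^{T}s^{2H-2}\varphi_{\varepsilon}(Y^{\varepsilon}(s))ds
+\mathbb{E}\int_{t}^{T}s^{2H-1}|\nabla\varphi_{\varepsilon}(Y^{\varepsilon}(s))|^{2}ds\\
&+\mathbb{E}\int_{t}^{T}s^{2H-1}\varphi_{\varepsilon}''(Y^{\varepsilon}(s))\frac{\hat{\sigma}(s)}{\sigma(s)}|Z^{\varepsilon}(s)|^{2}ds
=T^{2H-1}\mathbb{E}\varphi_{\varepsilon}(\xi)+\mathbb{E}\int_{t}^{T}s^{2H-1}\nabla\varphi_{\varepsilon}(Y^{\varepsilon}(s))f_{s}ds,
\end{align*}
where $f_{s}=f(s,\eta(s),Y^{\varepsilon}(s),Z^{\varepsilon}(s))$.

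The decisive point is that each term on the left is nonnegative: by convexity of $\varphi_{\varepsilon}$ one has $\varphi_{\varepsilon}''\geq0$, while $2H-1>0$, $\varphi_{\varepsilon}\geq0$ by (\ref{property of subdifferential operator})$(d)$, and $\frac{\hat{\sigma}}{\sigma}>0$ by (\ref{properties for sigma}). Hence I may bound the right side using $\varphi_{\varepsilon}(\xi)\leq\varphi(\xi)$ and, via the weighted Young inequality together with the Lipschitz bound $|f_{s}|\leq|f(s,0,0,0)|+L(|\eta(s)|+|Y^{\varepsilon}(s)|+|Z^{\varepsilon}(s)|)$ from $(H_{3})$, the estimate
\[
s^{2H-1}\nabla\varphi_{\varepsilon}f_{s}\leq\tfrac{1}{2}s^{2H-1}|\nabla\varphi_{\varepsilon}|^{2}+2s^{2H-1}\big(|f(s,0,0,0)|^{2}+L^{2}|\eta(s)|^{2}+L^{2}|Y^{\varepsilon}(s)|^{2}+L^{2}|Z^{\varepsilon}(s)|^{2}\big).
\]
After absorbing $\tfrac{1}{2}\int s^{2H-1}|\nabla\varphi_{\varepsilon}|^{2}$ into the left side, the crucial gain is that the weight $s^{2H-1}$ now sits on $|Z^{\varepsilon}|^{2}$, so this term is controlled by Proposition \ref{bound of Y and Z} ($\mathbb{E}\int_{0}^{T}s^{2H-1}|Z^{\varepsilon}|^{2}ds\leq C\Gamma_{1}\leq C\Gamma_{2}$); and since $s^{2H-1}\leq T^{2H-1}$, the $\eta$, $f(\cdot,0,0,0)$ and (again by Proposition \ref{bound of Y and Z}) the $Y^{\varepsilon}$ contributions become multiples of $\Gamma_{2}$. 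This gives $(i)$, $\mathbb{E}\int_{t}^{T}s^{2H-1}|\nabla\varphi_{\varepsilon}|^{2}ds\leq C\Gamma_{2}$ uniformly in $\varepsilon$ and $t$, the case $t=0$ following by monotone convergence as $t\downarrow0$.

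For $(ii)$ and $(iii)$ I keep instead the first term; the same bound on the cross term gives $t^{2H-1}\mathbb{E}\varphi_{\varepsilon}(Y^{\varepsilon}(t))\leq C\Gamma_{2}$. Property (\ref{property of subdifferential operator})$(a)$, $\varphi_{\varepsilon}(u)=\frac{\varepsilon}{2}|\nabla\varphi_{\varepsilon}(u)|^{2}+\varphi(J_{\varepsilon}u)$, together with $\varphi\geq0$, yields $\varphi(J_{\varepsilon}(Y^{\varepsilon}(t)))\leq\varphi_{\varepsilon}(Y^{\varepsilon}(t))$, whence $(ii)$; and $\frac{\varepsilon}{2}|\nabla\varphi_{\varepsilon}(Y^{\varepsilon}(t))|^{2}\leq\varphi_{\varepsilon}(Y^{\varepsilon}(t))$. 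Since $Y^{\varepsilon}(t)-J_{\varepsilon}(Y^{\varepsilon}(t))=\varepsilon\nabla\varphi_{\varepsilon}(Y^{\varepsilon}(t))$, the latter gives $t^{2H-1}\mathbb{E}|Y^{\varepsilon}(t)-J_{\varepsilon}(Y^{\varepsilon}(t))|^{2}=2\varepsilon\,t^{2H-1}\mathbb{E}\big[\tfrac{\varepsilon}{2}|\nabla\varphi_{\varepsilon}(Y^{\varepsilon}(t))|^{2}\big]\leq2\varepsilon\,t^{2H-1}\mathbb{E}\varphi_{\varepsilon}(Y^{\varepsilon}(t))\leq\varepsilon C\Gamma_{2}$, i.e. $(iii)$.

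The main obstacle is the rigorous justification of the It\^{o} formula for $s^{2H-1}\varphi_{\varepsilon}$, because $\varphi_{\varepsilon}$ is only $C^{1,1}$ and the available formula requires a $C^{2}$ integrand and a process of class $\mathcal{V}_{T}$. I would therefore run the computation first along the smooth approximations $(Y^{k,\varepsilon,\alpha},Z^{k,\varepsilon,\alpha})\in\mathcal{V}_{T}\times\mathcal{V}_{T}$ of the proof of Theorem \ref{Theorem 1 for fBSVI}, with $\varphi_{\varepsilon}$ replaced by the $C^{2}$ convex antiderivative of the mollification $(\nabla\varphi_{\varepsilon})^{\alpha}$, where the chain rule, the identification of $\mathbb{D}_{s}^{H}$ of the integrand, and the vanishing mean of the divergence integral are all legitimate, and then pass to the limits $k\to\infty$ and $\alpha\to0$ using the convergences (\ref{convergence property 2}) and (\ref{equation 6}). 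Two harmless technical points remain: the time weight produces the singular factor $s^{2H-2}$, integrable near $0$ since $H>1/2$ and carried by a term of good sign (so it is simply discarded after checking finiteness for fixed $\varepsilon$), and the limit $t\downarrow0$ in $(i)$; neither affects the final constant, which depends only on $L$, $T$, $M$ and $H$.
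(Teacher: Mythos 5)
Your proposal is correct and follows essentially the same route as the paper: the paper isolates your weighted energy inequality as a separate ``fractional stochastic subdifferential inequality'' lemma (stated for a general convex $C^{1}$ function $\psi$ with Lipschitz gradient and proved exactly as you outline, by mollifying the convex function, applying the generalized It\^{o} formula along the approximations $(Y^{k,\varepsilon,\alpha},Z^{k,\varepsilon,\alpha})$, using $\mathbb{D}_{s}^{H}Y^{k+1,\varepsilon,\alpha}(s)=\frac{\hat{\sigma}(s)}{\sigma(s)}Z^{k+1,\varepsilon,\alpha}(s)$ and discarding the nonnegative convexity and $s^{2H-2}$ terms, then passing to the limits in $\theta$, $k$ and $\alpha$), and then specializes to $\psi=\varphi_{\varepsilon}$ with the identical Young-inequality absorption, the bound from Proposition \ref{bound of Y and Z}, and the Moreau--Yosida properties $\varphi(J_{\varepsilon}u)\leq\varphi_{\varepsilon}(u)$ and $|u-J_{\varepsilon}u|^{2}\leq2\varepsilon\varphi_{\varepsilon}(u)$ for $(ii)$ and $(iii)$. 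No gaps.
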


In order to obtain the above proposition, it is essential to use the following
fractional stochastic subdifferential inequality:

\begin{lemma}
Let $\psi:\mathbb{R}\rightarrow\mathbb{R}_{+}$ be a convex $C^{1}$ function which derivative $\nabla\psi$ is a Lipschitz function
(with Lipschitz constant denoted by $K$). Then, for all $t\in(0,T]$, $P$-a.s.
\begin{equation}\label{fractional subdiff ineq}
\begin{array}
[c]{l}
t^{2H-1}\mathbb{E}\left[  \psi\left(  Y^{\varepsilon}(t)\right)  \right]
+\mathbb{E}{\displaystyle\int_{t}^{T}}s^{2H-1}\nabla\psi\left( Y^{\varepsilon}(s)\right)\nabla \varphi_{\varepsilon}( Y^{\varepsilon}(s))ds \medskip\\
\leq T^{2H-1}\mathbb{E}\left[  \psi(\xi)\right]+\mathbb{E}
{\displaystyle\int_{t}^{T}}s^{2H-1}\nabla\psi\left(  Y^{\varepsilon}(s) \right)f(s,\eta(s),Y^{\varepsilon}(s),Z^{\varepsilon}(s))ds.
\end{array}
\end{equation}
where $(Y^{\varepsilon},Z^{\varepsilon})$ is the solution constructed in the proof of Theorem \ref{Theorem 1 for fBSVI}.
\end{lemma}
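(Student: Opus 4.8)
The plan is to apply a time-weighted Itô formula to the map $s\mapsto s^{2H-1}\psi(Y^{\varepsilon}(s))$ and then to discard two nonnegative terms. Set $\Psi(s,x)=s^{2H-1}\psi(x)$, so that $\partial_s\Psi(s,x)=(2H-1)s^{2H-2}\psi(x)$, $\partial_x\Psi(s,x)=s^{2H-1}\nabla\psi(x)$ and $\partial_{xx}\Psi(s,x)=s^{2H-1}\psi''(x)$; on $[t,T]$ with $t>0$ the function $\Psi$ is of class $C^{1,2}$ and has no singularity at the origin, which is exactly why the statement is restricted to $t\in(0,T]$. Recalling the differential form of the penalized equation (\ref{penalized fBSVI}), namely $dY^{\varepsilon}(s)=\big(\nabla\varphi_{\varepsilon}(Y^{\varepsilon}(s))-f(s,\eta(s),Y^{\varepsilon}(s),Z^{\varepsilon}(s))\big)ds+Z^{\varepsilon}(s)\delta B^{H}(s)$, the generalized Itô formula (Theorem \ref{general Ito formula for the divergence integral}, whose correction term specializes to the one already appearing in Theorem \ref{Ito formula for divergence type integral}) gives on $[t,T]$
\[
\begin{array}{l}
T^{2H-1}\psi(\xi)=t^{2H-1}\psi(Y^{\varepsilon}(t))+(2H-1){\displaystyle\int_{t}^{T}}s^{2H-2}\psi(Y^{\varepsilon}(s))ds\medskip\\
\quad+{\displaystyle\int_{t}^{T}}s^{2H-1}\nabla\psi(Y^{\varepsilon}(s))\big(\nabla\varphi_{\varepsilon}(Y^{\varepsilon}(s))-f(s,\eta(s),Y^{\varepsilon}(s),Z^{\varepsilon}(s))\big)ds\medskip\\
\quad+{\displaystyle\int_{t}^{T}}s^{2H-1}\nabla\psi(Y^{\varepsilon}(s))Z^{\varepsilon}(s)\delta B^{H}(s)+{\displaystyle\int_{t}^{T}}s^{2H-1}\psi''(Y^{\varepsilon}(s))Z^{\varepsilon}(s)\mathbb{D}_{s}^{H}Y^{\varepsilon}(s)ds.
\end{array}
\]

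The next step is to rewrite the last integral using Proposition \ref{connection Y, Z}, which yields $\mathbb{D}_{s}^{H}Y^{\varepsilon}(s)=\frac{\hat{\sigma}(s)}{\sigma(s)}Z^{\varepsilon}(s)$, so that the correction term equals $\int_{t}^{T}s^{2H-1}\psi''(Y^{\varepsilon}(s))\frac{\hat{\sigma}(s)}{\sigma(s)}|Z^{\varepsilon}(s)|^{2}ds$. Taking expectation annihilates the divergence integral (it has zero mean by Theorem \ref{Th.1_Hu 2005}) and, after rearrangement,
\[
\begin{array}{l}
t^{2H-1}\mathbb{E}[\psi(Y^{\varepsilon}(t))]+\mathbb{E}{\displaystyle\int_{t}^{T}}s^{2H-1}\nabla\psi(Y^{\varepsilon}(s))\nabla\varphi_{\varepsilon}(Y^{\varepsilon}(s))ds=T^{2H-1}\mathbb{E}[\psi(\xi)]\medskip\\
\quad+\mathbb{E}{\displaystyle\int_{t}^{T}}s^{2H-1}\nabla\psi(Y^{\varepsilon}(s))f(s,\eta(s),Y^{\varepsilon}(s),Z^{\varepsilon}(s))ds-(2H-1)\mathbb{E}{\displaystyle\int_{t}^{T}}s^{2H-2}\psi(Y^{\varepsilon}(s))ds\medskip\\
\quad-\mathbb{E}{\displaystyle\int_{t}^{T}}s^{2H-1}\psi''(Y^{\varepsilon}(s))\frac{\hat{\sigma}(s)}{\sigma(s)}|Z^{\varepsilon}(s)|^{2}ds.
\end{array}
\]
The two subtracted terms are both nonnegative: the first because $\psi\geq0$ and $2H-1>0$, the second because $\psi$ is convex ($\psi''\geq0$) and $\hat{\sigma}(s)/\sigma(s)>0$ by (\ref{properties for sigma}). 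Dropping them produces exactly the claimed inequality (\ref{fractional subdiff ineq}). Thus the heart of the argument is the observation that both the weight-induced term and the fractional Itô correction carry a favorable sign.

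Two points require care, and the second is the main obstacle. First, $\psi$ is only $C^{1,1}$, so $\psi''$ exists merely a.e. and satisfies $0\le\psi''\le K$; I would mollify $\psi$ into convex $C^{2}$ functions $\psi_{n}$ with uniformly $K$-Lipschitz gradients, run the computation above for each $\psi_{n}$, and pass to the limit, the uniform bound $0\le\psi_{n}''\le K$ ensuring that the discarded correction term remains nonnegative throughout. Second, $(Y^{\varepsilon},Z^{\varepsilon})$ only belong to the completion spaces $\mathcal{\bar{V}}_{T}^{1/2}\times\mathcal{\bar{V}}_{T}^{H}$, where the Itô formula does not apply directly. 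I would therefore first establish the identity at the level of the $\mathcal{V}_{T}$-valued approximations $(Y^{k,\varepsilon,\alpha},Z^{k,\varepsilon,\alpha})$ constructed in the proof of Theorem \ref{Theorem 1 for fBSVI} — for which $\mathbb{D}_{s}^{H}Y^{k,\varepsilon,\alpha}(s)=\frac{\hat{\sigma}(s)}{\sigma(s)}Z^{k,\varepsilon,\alpha}(s)$ holds just as in Proposition \ref{connection Y, Z}, and for which the divergence integral has zero mean — then discard the two nonnegative terms to obtain the inequality for the approximations, and finally pass to the limit $k\to\infty$ and $\alpha\to0$ via the $L^{2}$- and $s^{2H-1}$-weighted convergences (\ref{convergence property 2}) and (\ref{equation 6}). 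Because we only need an inequality, it suffices that the four retained terms converge — which follows from the Lipschitz continuity of $\nabla\psi$, $\nabla\varphi_{\varepsilon}$ and $f$ together with those convergences — and we never have to track the limit of the dropped correction term, which is what makes the otherwise delicate passage to the limit tractable.
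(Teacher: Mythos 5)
Your proposal is correct and follows essentially the same route as the paper: apply the generalized It\^{o} formula to $s^{2H-1}\psi(\cdot)$ at the level of the $\mathcal{V}_T$-valued approximations $(Y^{k,\varepsilon,\alpha},Z^{k,\varepsilon,\alpha})$ (after mollifying $\psi$ to gain a second derivative), use $\mathbb{D}^H_sY=\frac{\hat\sigma(s)}{\sigma(s)}Z$ to see that the correction term and the $(2H-1)s^{2H-2}\psi$ term are both nonnegative, drop them, and pass to the limits in $\theta$, $k$ and $\alpha$. The only slight looseness is in the final limit passage: for the term $\mathbb{E}[\psi(Y^{k,\varepsilon,\alpha}(t))]$ the Lipschitz bound on $\nabla\psi$ only gives quadratic growth of $\psi$, so (as the paper does) one should invoke Fatou's lemma and the sign $\psi\ge0$ rather than claim convergence of that term.
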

\begin{proof}
We first show that
\[
\begin{array}
[c]{l}
t^{2H-1}\mathbb{E}\left[  \psi\left(  Y^{k+1,\varepsilon,\alpha}\left(  t\right)  \right)  \right]
+\mathbb{E}{\displaystyle\int_{t}^{T}}s^{2H-1}\nabla\psi\left(  Y^{k+1,\varepsilon,\alpha}(s) \right)(\nabla \varphi_{\varepsilon})^{\alpha}( Y^{k,\varepsilon,\alpha}(s))ds \medskip\\
\leq T^{2H-1}\mathbb{E}\left[  \psi\left(\xi\right)  \right]
+\mathbb{E}{\displaystyle\int_{t}^{T}}s^{2H-1}\nabla\psi\left(  Y^{k+1,\varepsilon,\alpha}(s) \right)f(s,\eta(s),Y^{k,\varepsilon,\alpha}(s),Z^{k,\varepsilon,\alpha}(s))ds ,
\end{array}
\]
where $(Y^{k,\varepsilon,\alpha},Z^{k,\varepsilon,\alpha})\in \mathcal{V}_{T}\times \mathcal{V}_{T}$ is defined through (\ref{approximate BSDE 1}).
We mollify the function $\psi$ by setting, for $\theta>0$, $\psi^{\theta}(x):={\displaystyle\int_{\mathbb{R}}}\psi(x-\theta u)\lambda(u)du,~x\in\mathbb{R}$, where $\lambda(u)=\dfrac{1}{\sqrt{2\pi}}e^{-\frac{u^{2}}{2}},~u\in\mathbb{R}$.
From the convexity of $\psi$, it follows that $\psi^{\theta}$ is convex. Moreover $\psi^{\theta}\ge0$.
The generalized It\^{o} formula [see (\ref{1 general Ito's formula for the divergence integral}) in Remark \ref{remark for genelized ito formula}] yields
\begin{equation}\label{Ito formula from Nualart book}
\begin{array}
[c]{l}
T^{2H-1}\psi^{\theta}(\xi)=t^{2H-1}\psi^{\theta}(Y^{k+1,\varepsilon,\alpha}(t))
+(2H-1){\displaystyle\int_{t}^{T}}s^{2H-2}\psi^{\theta}(Y^{k+1,\varepsilon,\alpha}(s))ds\medskip\\
\qquad-{\displaystyle\int_{t}^{T}}s^{2H-1}\nabla\psi^{\theta}(Y^{k+1,\varepsilon,\alpha}(s))f\left(  s,\eta(s),Y^{k,\varepsilon,\alpha}(s),Z^{k,\varepsilon,\alpha}(s)\right) ds\medskip\\
\qquad+{\displaystyle\int_{t}^{T}}s^{2H-1}\nabla\psi^{\theta}(Y^{k+1,\varepsilon,\alpha}(s))
\left(\nabla\varphi_{\varepsilon}\right)^{\alpha}(Y^{k,\varepsilon,\alpha}(s))ds\medskip\\
\qquad+{\displaystyle\int_{t}^{T}}s^{2H-1}\nabla\psi^{\theta}(Y^{k+1,\varepsilon,\alpha}(s))Z^{k+1,\varepsilon,\alpha}(s)\delta B^{H}(s)\medskip\\
\qquad+{\displaystyle\int_{t}^{T}}s^{2H-1}D^{2}_{xx}\psi^{\theta}(Y^{k+1,\varepsilon,\alpha}(s))Z^{k+1,\varepsilon,\alpha}(s)
\mathbb{D}^{H}_{s}Y^{k+1,\varepsilon,\alpha}(s)ds.
\end{array}
\end{equation}

Now taking the expectation in (\ref{Ito formula from Nualart book}), by using $\psi^{\theta}\ge0$, the convexity of $\psi^{\theta}$ and the fact that $\mathbb{D}^{H}_{s}Y^{k+1,\varepsilon,\alpha}(s)=\dfrac{\hat{\sigma}(s)}{\sigma(s)}Z^{k+1,\varepsilon,\alpha}(s)$, we have
\begin{equation}\label{fractional subdiff ineq 3}
\begin{array}
[c]{l}
t^{2H-1}\mathbb{E}\left[\psi^{\theta}(Y^{k+1,\varepsilon,\alpha}(t))\right]
+\mathbb{E}{\displaystyle\int_{t}^{T}}s^{2H-1}\nabla\psi^{\theta}(Y^{k+1,\varepsilon,\alpha}(s))
\left(\nabla\varphi_{\varepsilon}\right)^{\alpha}(Y^{k,\varepsilon,\alpha}(s))ds\medskip\\
\leq T^{2H-1}\mathbb{E}\psi^{\theta}(\xi)
+\mathbb{E}{\displaystyle\int_{t}^{T}}s^{2H-1}\nabla\psi^{\theta}(Y^{k+1,\varepsilon,\alpha}(s))f\left(  s,\eta(s),Y^{k,\varepsilon,\alpha}(s),Z^{k,\varepsilon,\alpha}(s)\right) ds.
\end{array}
\end{equation}
Considering that
\[
\begin{array}
[c]{l}
\left|\nabla\psi^{\theta}(x)-\nabla\psi(x)\right|
=\left|\nabla{\displaystyle\int_{\mathbb{R}}}\psi(x-\theta u)\lambda(u)du-\nabla\psi(x)\right|\medskip\\
\leq{\displaystyle\int_{\mathbb{R}}}\left|\nabla\psi(x-\theta u)-\nabla\psi(x)\right|\lambda(u)du
\leq \sqrt{\dfrac{2}{\pi}}K|\theta|,
\end{array}
\]
we have
\[
\begin{array}
[c]{l}
\mathbb{E}\Bigg|{\displaystyle\int_{t}^{T}}s^{2H-1}\nabla\psi^{\theta}(Y^{k+1,\varepsilon,\alpha}(s))
\left(\nabla\varphi_{\varepsilon}\right)^{\alpha}(Y^{k,\varepsilon,\alpha}(s))ds\medskip\\
\qquad\qquad-{\displaystyle\int_{t}^{T}}s^{2H-1}\nabla\psi(Y^{k+1,\varepsilon,\alpha}(s))
\left(\nabla\varphi_{\varepsilon}\right)^{\alpha}(Y^{k,\varepsilon,\alpha}(s))ds\Bigg|\medskip\\
\leq T^{2H-1}\sqrt{\dfrac{2}{\pi}}K|\theta|\mathbb{E}{\displaystyle\int_{t}^{T}}
\left|\left(\nabla\varphi_{\varepsilon}\right)^{\alpha}(Y^{k,\varepsilon,\alpha}(s))\right|ds\rightarrow0, \text{ as }\theta\rightarrow0.
\end{array}
\]
Similarly, we get
\[
\begin{array}
[c]{l}
\mathbb{E}{\displaystyle\int_{t}^{T}}s^{2H-1}\nabla\psi^{\theta}(Y^{k+1,\varepsilon,\alpha}(s))
f\left(  s,\eta(s),Y^{k,\varepsilon,\alpha}(s),Z^{k,\varepsilon,\alpha}(s)\right) ds\medskip\\
\rightarrow\mathbb{E}{\displaystyle\int_{t}^{T}}s^{2H-1}\nabla\psi(Y^{k+1,\varepsilon,\alpha}(s))
f\left(  s,\eta(s),Y^{k,\varepsilon,\alpha}(s),Z^{k,\varepsilon,\alpha}(s)\right) ds
\rightarrow0, \text{ as }\theta\rightarrow0.
\end{array}
\]
Moreover, using Fatou's Lemma (recalling that $\psi\ge0$), we obtain
\[
\mathbb{E}\left[\psi(Y^{k+1,\varepsilon,\alpha}(t))\right]
=\mathbb{E}\left[\liminf\limits_{\theta\rightarrow0}\psi^{\theta}(Y^{k+1,\varepsilon,\alpha}(t))\right]
\leq\liminf\limits_{\theta\rightarrow0}\mathbb{E}\left[\psi^{\theta}(Y^{k+1,\varepsilon,\alpha}(t))\right].
\]
On the other hand, we know that $\psi$ is quadratic growth; therefore there exists a suitable constant $C$, such that
\[
|\psi^{\theta}(x)|\leq{\displaystyle\int_{\mathbb{R}}}|\psi(x-\theta u)|\lambda(u)du\leq C(1+x^{2}+\theta^{2}).
\]
From  $(H_{4})$ and (\ref{Lp estimates}), it follows $\sup\limits_{\theta\leq 1}\mathbb{E}\left[|\psi^{\theta}(\xi)|\right]^{2}<\infty$, which implies that $\{\psi^{\theta}(\xi)\}_{\theta\leq1}$ is uniformly integrable. Then, considering that $\psi^{\theta}(\xi)\xrightarrow[P-a.s.]{\theta\rightarrow0}\psi(\xi)$, we have
$\mathbb{E}[\psi^{\theta}(\xi)]\rightarrow\mathbb{E}[\psi(\xi)]$.
Consequently, letting $\theta\rightarrow0$ in (\ref{fractional subdiff ineq 3}) we have
\begin{equation}\label{fractional subdiff ineq 4}
\begin{array}
[c]{l}
t^{2H-1}\mathbb{E}\left[\psi(Y^{k+1,\varepsilon,\alpha}(t))\right]
+\mathbb{E}{\displaystyle\int_{t}^{T}}s^{2H-1}\nabla\psi(Y^{k+1,\varepsilon,\alpha}(s))
\left(\nabla\varphi_{\varepsilon}\right)^{\alpha}(Y^{k,\varepsilon,\alpha}(s))ds\medskip\\
\leq T^{2H-1}\mathbb{E}\left[\psi(\xi)\right]
+\mathbb{E}{\displaystyle\int_{t}^{T}}s^{2H-1}\nabla\psi(Y^{k+1,\varepsilon,\alpha}(s))f\left(  s,\eta(s),Y^{k,\varepsilon,\alpha}(s),Z^{k,\varepsilon,\alpha}(s)\right) ds.
\end{array}
\end{equation}
Recalling that [see (\ref{convergence property 2})]
\[
\begin{array}
[c]{ll}
\lim\limits_{k\rightarrow\infty}\mathbb{E}|Y^{k,\varepsilon,\alpha}(t)-Y^{\varepsilon,\alpha}(t)|^{2}=0,~t\in[0,T],\text{ and } \medskip\\
\lim\limits_{k\rightarrow\infty}\mathbb{E}{\displaystyle\int_{0}^{T}}|Y^{k,\varepsilon,\alpha}(s)-Y^{\varepsilon,\alpha}(s)|^{2}ds
+\mathbb{E}{\displaystyle\int_{0}^{T}}s^{2H-1}|Z^{k,\varepsilon,\alpha}(s)-Z^{\varepsilon,\alpha}(s)|^{2}ds=0,
\end{array}
\]
it follows that $Y^{k,\varepsilon,\alpha}(t)\xrightarrow[P-a.s.]{k\rightarrow\infty} Y^{\varepsilon,\alpha}(t)$, for all $t\in[0,T]$ and then also $\psi(Y^{k,\varepsilon,\alpha}(t))\xrightarrow[P-a.s.]{k\rightarrow\infty}\psi(Y^{\varepsilon,\alpha}(t))$, for all $t\in[0,T]$.
Thus, using Fatou's Lemma once again (recalling that $\psi\ge0$), we obtain
\[
\mathbb{E}\left[\psi(Y^{\varepsilon,\alpha}(t))\right]
\leq\liminf\limits_{k\rightarrow\infty}\mathbb{E}\left[\psi(Y^{k+1,\varepsilon,\alpha}(t))\right].
\]
Considering that $\nabla\psi$ and $\left(\nabla\varphi_{\varepsilon}\right)^{\alpha}$ are Lipschitz
with the Lipschitz constant $K$ and $1/\varepsilon$, respectively, we get
\[
\begin{array}
[c]{l}
\mathbb{E}\Bigg|{\displaystyle\int_{t}^{T}}s^{2H-1}\nabla\psi(Y^{k+1,\varepsilon,\alpha}(s))
\left(\nabla\varphi_{\varepsilon}\right)^{\alpha}(Y^{k,\varepsilon,\alpha}(s))ds\medskip\\
\qquad\qquad-{\displaystyle\int_{t}^{T}}s^{2H-1}\nabla\psi(Y^{\varepsilon,\alpha}(s))
\left(\nabla\varphi_{\varepsilon}\right)^{\alpha}(Y^{\varepsilon,\alpha}(s))ds\Bigg|\medskip\\
\leq T^{2H-1}
\mathbb{E}{\displaystyle\int_{t}^{T}}\left|\nabla\psi(Y^{k+1,\varepsilon,\alpha}(s))-\nabla\psi(Y^{\varepsilon,\alpha}(s))\right|
\left|\left(\nabla\varphi_{\varepsilon}\right)^{\alpha}(Y^{k,\varepsilon,\alpha}(s))\right|ds\medskip\\
\qquad\qquad+T^{2H-1}\mathbb{E}{\displaystyle\int_{t}^{T}}\left|\nabla\psi(Y^{\varepsilon,\alpha}(s))\right|
\left|\left(\nabla\varphi_{\varepsilon}\right)^{\alpha}(Y^{k,\varepsilon,\alpha}(s))
-\left(\nabla\varphi_{\varepsilon}\right)^{\alpha}(Y^{\varepsilon,\alpha}(s))\right|ds\medskip\\
\leq
KT^{2H-1}\mathbb{E}{\displaystyle\int_{t}^{T}}\left|Y^{k+1,\varepsilon,\alpha}(s)-Y^{\varepsilon,\alpha}(s)\right|
\left|\left(\nabla\varphi_{\varepsilon}\right)^{\alpha}(Y^{k,\varepsilon,\alpha}(s))\right|ds\medskip\\
\qquad\qquad+\dfrac{1}{\varepsilon}T^{2H-1}\mathbb{E}{\displaystyle\int_{t}^{T}}\left|\nabla\psi(Y^{\varepsilon,\alpha}(s))\right|
\left|Y^{k,\varepsilon,\alpha}(s)-Y^{\varepsilon,\alpha}(s)\right|ds\rightarrow0, ~\text{ as }k\rightarrow\infty.
\end{array}
\]
Indeed, using that the functions $\nabla\psi$ and $\left(\nabla\varphi_{\varepsilon}\right)^{\alpha}$ are of linear growth, $\mathbb{E}{\displaystyle\int_{t}^{T}}|Y^{\varepsilon,\alpha}(s)|^{2}ds
+\mathbb{E}{\displaystyle\int_{t}^{T}}|Y^{k,\varepsilon,\alpha}(s)|^{2}ds\leq C$, $k\ge1$, and $\lim\limits_{k\rightarrow\infty}\mathbb{E}{\displaystyle\int_{0}^{T}}|Y^{k,\varepsilon,\alpha}(s)-Y^{\varepsilon,\alpha}(s)|^{2}ds=0$,
we can obtain the above convergence with the help of H\"{o}lder inequality. Similarly, we show
\[
\begin{array}
[c]{l}
\mathbb{E}{\displaystyle\int_{t}^{T}}s^{2H-1}\nabla\psi(Y^{k+1,\varepsilon,\alpha}(s))
f\left(  s,\eta(s),Y^{k,\varepsilon,\alpha}(s),Z^{k,\varepsilon,\alpha}(s)\right) ds\medskip\\
\rightarrow\mathbb{E}{\displaystyle\int_{t}^{T}}s^{2H-1}\nabla\psi(Y^{\varepsilon,\alpha}(s))
f\left(  s,\eta(s),Y^{\varepsilon,\alpha}(s),Z^{\varepsilon,\alpha}(s)\right) ds
\rightarrow0, \text{ as }k\rightarrow\infty.
\end{array}
\]
Consequently, letting $k\rightarrow\infty$ in (\ref{fractional subdiff ineq 4}) yields that
\begin{equation}\label{fractional subdiff ineq 5}
\begin{array}
[c]{l}
t^{2H-1}\mathbb{E}\left[\psi(Y^{\varepsilon,\alpha}(t))\right]
+\mathbb{E}{\displaystyle\int_{t}^{T}}s^{2H-1}\nabla\psi(Y^{\varepsilon,\alpha}(s))
\left(\nabla\varphi_{\varepsilon}\right)^{\alpha}(Y^{\varepsilon,\alpha}(s))ds\medskip\\
\leq T^{2H-1}\mathbb{E}\psi(\xi)
+\mathbb{E}{\displaystyle\int_{t}^{T}}s^{2H-1}\nabla\psi(Y^{\varepsilon,\alpha}(s))
f\left(s,\eta(s),Y^{\varepsilon,\alpha}(s),Z^{\varepsilon,\alpha}(s)\right)ds,~t\in[0,T].
\end{array}
\end{equation}
A similar argument allows to take the limit $\alpha\rightarrow0$ in (\ref{fractional subdiff ineq 5}) (using (\ref{equation 6})), it follows
\[
\begin{array}
[c]{l}
t^{2H-1}\mathbb{E}\left[\psi\left(Y^{\varepsilon}(t)\right)\right]
+\mathbb{E}{\displaystyle\int_{t}^{T}}s^{2H-1}\nabla\psi\left(Y^{\varepsilon}(s) \right)\nabla \varphi_{\varepsilon}( Y^{\varepsilon}(s))ds \medskip\\
\leq T^{2H-1}\mathbb{E}\left[\psi(\xi)\right]
+\mathbb{E}{\displaystyle\int_{t}^{T}}s^{2H-1}\nabla\psi\left(Y^{\varepsilon}(s) \right)f(s,\eta(s),Y^{\varepsilon}(s),Z^{\varepsilon}(s))ds, ~t\in(0,T],
\end{array}
\]
and the statement is proven.
\hfill
\end{proof}
\medskip

Now, we are able to give the proof of Proposition \ref{bound of approxim phi}.\medskip

\begin{proof}[Proof of Proposition \ref{bound of approxim phi}]
We consider $\psi(x)=\varphi_{\varepsilon}(x)$, $x\in\mathbb{R}$, and applying (\ref{fractional subdiff ineq}) we have
\[
\begin{array}
[c]{l}
t^{2H-1}\mathbb{E}\left[  \varphi_{\varepsilon}( Y^{\varepsilon}(t))\right]
+\mathbb{E}{\displaystyle\int_{t}^{T}}s^{2H-1}\left|\nabla\varphi_{\varepsilon}(Y^{\varepsilon}(s))\right|^{2}ds\medskip\\
\leq T^{2H-1}\mathbb{E}\left[  \varphi_{\varepsilon}(\xi) \right]
+\mathbb{E}{\displaystyle\int_{t}^{T}}s^{2H-1}\nabla\varphi_{\varepsilon}(Y^{\varepsilon}(s))
f(s,\eta(s),Y^{\varepsilon}(s),Z^{\varepsilon}(s))ds,~s\in[0,T].
\end{array}
\]
Since $0\leq\varphi_{\varepsilon}(u)\leq\varphi(u)$, $u\in\mathbb{R}$ we obtain
\[
\begin{array}
[c]{l}
t^{2H-1}\mathbb{E}\left[\varphi_{\varepsilon}\left(Y^{\varepsilon}(t)\right)\right]
+\mathbb{E}{\displaystyle\int_{t}^{T}}s^{2H-1}\left\vert \nabla\varphi_{\varepsilon}\left(Y^{\varepsilon}(s)\right)\right\vert^{2}ds\medskip\\
\quad\leq T^{2H-1}\mathbb{E}\left[\varphi(\xi)\right]
+\mathbb{E}{\displaystyle\int_{t}^{T}}\left[\dfrac{1}{2}s^{2H-1}\left\vert\nabla\varphi_{\varepsilon}\left(Y^{\varepsilon}(s)\right)\right\vert ^{2}
+\dfrac{1}{2}s^{2H-1}\left\vert f\left(s,\eta(s),Y^{\varepsilon}(s),Z^{\varepsilon}(s)\right)\right\vert^{2}\right]ds\medskip\\
\quad\leq T^{2H-1}\mathbb{E}\left[\varphi(\xi)\right]
+\dfrac{1}{2}\mathbb{E}{\displaystyle\int_{t}^{T}}s^{2H-1}\left\vert\nabla\varphi_{\varepsilon}\left(Y^{\varepsilon}(s)\right)\right\vert^{2}ds\medskip\\
\quad\quad+2\mathbb{E}{\displaystyle\int_{t}^{T}}s^{2H-1}\big(\left\vert f\left(s,0,0,0\right)\right\vert ^{2}
+L^{2}\left\vert \eta(s)\right\vert^{2}+L^{2}\left\vert Y^{\varepsilon}(s)\right\vert ^{2}+L^{2}\left\vert Z^{\varepsilon}(s)\right\vert^{2}\big)ds.
\end{array}
\]
Considering Proposition \ref{bound of Y and Z}, we see that
\[
t^{2H-1}\mathbb{E}\left[\varphi_{\varepsilon}\left(Y^{\varepsilon}(t)\right)\right]
+\mathbb{E}{\int_{t}^{T}}s^{2H-1}\left\vert \nabla\varphi_{\varepsilon}\left(Y^{\varepsilon}(s)\right)\right\vert ^{2}ds\leq C~\Gamma_{2}(T).
\]
Therefore, since $\varphi(J_{\varepsilon}u)\leq \varphi_{\varepsilon}(u)$, $u\in\mathbb{R}$, we have proven $(i)$ and $(ii)$ of the proposition.

Finally, in order to obtain $(iii)$, it is suffices to remark that
$\left\vert u-J_{\varepsilon}(u)\right\vert^{2}=|\nabla\varphi_{\varepsilon}(u)|^{2}\leq2\varepsilon\varphi_{\varepsilon}(u)$, $u\in\mathbb{R}$.
\hfill
\end{proof}

\begin{proposition}\label{Cauchy sequence}
Let the assumptions $(H_{1})$-$(H_{5})$ be satisfied. Then  there exists a positive constant $C$ such that, for all $\varepsilon,\delta>0$,
\[
\sup\limits_{s\in[0,T]}s^{2H-1}\mathbb{E}\left|Y^{\varepsilon}(s)-Y^{\delta}(s)\right|^{2}ds
+{\int_{0}^{T}}s^{2\left(2H-1\right)}\mathbb{E}\left|Z^{\varepsilon}(s)-Z^{\delta}(s)\right|^{2}ds\leq(\varepsilon+\delta)C~\Gamma_{2}(T).
\]

\end{proposition}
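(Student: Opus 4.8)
The plan is to apply the weighted It\^o formula to the process $s^{2H-1}|\Delta Y(s)|^{2}$, where $\Delta Y:=Y^{\varepsilon}-Y^{\delta}$ and $\Delta Z:=Z^{\varepsilon}-Z^{\delta}$, and then to exploit the monotonicity estimate (\ref{property of subdifferential operator}-$e$) to extract the factor $\varepsilon+\delta$. Subtracting the two penalized equations (\ref{penalized fBSVI}) for the parameters $\varepsilon$ and $\delta$, the pair $(\Delta Y,\Delta Z)$ solves
\[
\Delta Y(t)+{\displaystyle\int_{t}^{T}}\big(\nabla\varphi_{\varepsilon}(Y^{\varepsilon}(s))-\nabla\varphi_{\delta}(Y^{\delta}(s))\big)ds
={\displaystyle\int_{t}^{T}}\Delta f(s)\,ds-{\displaystyle\int_{t}^{T}}\Delta Z(s)\delta B^{H}(s),
\]
where $\Delta f(s):=f(s,\eta(s),Y^{\varepsilon}(s),Z^{\varepsilon}(s))-f(s,\eta(s),Y^{\delta}(s),Z^{\delta}(s))$. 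The relation $\mathbb{D}_{s}^{H}Y^{\varepsilon}(s)=\frac{\hat{\sigma}(s)}{\sigma(s)}Z^{\varepsilon}(s)$ (Proposition \ref{connection Y, Z} applied to the penalized BSDE (\ref{penalized fBSVI}), whose driver $f-\nabla\varphi_{\varepsilon}$ still satisfies a Lipschitz condition of type $(H_{3})$), together with the linearity of $\mathbb{D}^{H}$, yields $\mathbb{D}_{s}^{H}\Delta Y(s)=\frac{\hat{\sigma}(s)}{\sigma(s)}\Delta Z(s)$ for a.e. $s\in(0,T]$. Applying the weighted It\^o formula (as in (\ref{Ito formula from Nualart book})) with $\psi(s,x)=s^{2H-1}x^{2}$ --- first to the approximating sequences in $\mathcal{V}_{T}\times\mathcal{V}_{T}$ and then letting $k\to\infty$, $\alpha\to0$ exactly as in the preceding lemma --- taking expectation, so that the divergence integral drops out, and using the above identity for $\mathbb{D}_{s}^{H}\Delta Y(s)$, I obtain for every $\rho>0$ and $t\in[\rho,T]$
\[
t^{2H-1}\mathbb{E}|\Delta Y(t)|^{2}+2{\displaystyle\int_{t}^{T}}s^{2H-1}\frac{\hat{\sigma}(s)}{\sigma(s)}\mathbb{E}|\Delta Z(s)|^{2}ds
\leq 2{\displaystyle\int_{t}^{T}}s^{2H-1}\mathbb{E}\big[\Delta Y(s)\Delta f(s)\big]ds
-2{\displaystyle\int_{t}^{T}}s^{2H-1}\mathbb{E}\big[\Delta Y(s)(\nabla\varphi_{\varepsilon}(Y^{\varepsilon}(s))-\nabla\varphi_{\delta}(Y^{\delta}(s)))\big]ds,
\]
where the nonnegative term $(2H-1)\int_{t}^{T}s^{2H-2}\mathbb{E}|\Delta Y(s)|^{2}ds$ has been discarded from the left-hand side.

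The decisive step is to estimate the last integral by (\ref{property of subdifferential operator}-$e$), which gives
\[
-\Delta Y(s)\big(\nabla\varphi_{\varepsilon}(Y^{\varepsilon}(s))-\nabla\varphi_{\delta}(Y^{\delta}(s))\big)\leq(\varepsilon+\delta)\nabla\varphi_{\varepsilon}(Y^{\varepsilon}(s))\nabla\varphi_{\delta}(Y^{\delta}(s)).
\]
After multiplying by $s^{2H-1}$, integrating, and applying the Cauchy--Schwarz inequality together with Proposition \ref{bound of approxim phi}$(i)$ for both parameters, this term is bounded by $(\varepsilon+\delta)\,C\,\Gamma_{2}(T)$. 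For the driver term I use the Lipschitz property $(H_{3})$, namely $|\Delta f(s)|\leq L(|\Delta Y(s)|+|\Delta Z(s)|)$, and Young's inequality in the form $2Ls^{2H-1}|\Delta Y||\Delta Z|\leq ML^{2}|\Delta Y|^{2}+\frac{1}{M}s^{2(2H-1)}|\Delta Z|^{2}$, with $M$ the constant of Remark \ref{remark for sigma}. Writing $g(s):=s^{2H-1}\mathbb{E}|\Delta Y(s)|^{2}$ and using $\frac{\hat{\sigma}(s)}{\sigma(s)}\geq\frac{1}{M}s^{2H-1}$ on the left, the term $\frac{1}{M}\int_{t}^{T}s^{2(2H-1)}\mathbb{E}|\Delta Z|^{2}ds$ produced by Young is absorbed into the coercive left-hand side, leaving
\[
g(t)+\frac{1}{M}{\displaystyle\int_{t}^{T}}s^{2(2H-1)}\mathbb{E}|\Delta Z(s)|^{2}ds
\leq(\varepsilon+\delta)C\,\Gamma_{2}(T)+{\displaystyle\int_{t}^{T}}\big(2L+ML^{2}s^{1-2H}\big)g(s)\,ds.
\]

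Finally, since $H>1/2$ gives $1-2H\in(-1,0)$, the kernel $2L+ML^{2}s^{1-2H}$ is integrable on $[0,T]$, so Gronwall's inequality applied to $g$ yields $\sup_{t\in[\rho,T]}g(t)\leq(\varepsilon+\delta)C_{1}\Gamma_{2}(T)$ with $C_{1}$ independent of $\rho$; reinserting this bound into the displayed inequality then controls the $\Delta Z$ integral as well, and letting $\rho\downarrow0$ gives the assertion. I expect the main obstacle to be not the algebra but the rigorous justification of the weighted It\^o identity for the limiting processes $(Y^{\varepsilon},Z^{\varepsilon})$, which are only limits of $\mathcal{V}_{T}$-processes: this forces one to run the computation along the approximating family $(Y^{k,\varepsilon,\alpha},Z^{k,\varepsilon,\alpha})$ and pass to the limit as in the proof of the fractional subdifferential inequality, while carefully handling the singular weights $s^{2H-2}$ and $s^{1-2H}$ near the origin (hence the detour through $[\rho,T]$ and the final limit $\rho\downarrow0$).
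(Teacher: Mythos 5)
Your proof is correct and follows essentially the same route as the paper: the weighted energy identity for $s^{2H-1}\mathbb{E}|\Delta Y(s)|^{2}$ (which the paper obtains by first writing the unweighted It\^{o} identity (\ref{equation which will be used}) and then applying the deterministic product rule in $s$, rather than applying the It\^{o} formula directly to $\psi(s,x)=s^{2H-1}x^{2}$ --- an immaterial difference), the monotonicity bound (\ref{property of subdifferential operator}-$e$) combined with Proposition \ref{bound of approxim phi}$(i)$ to produce the factor $\varepsilon+\delta$, Young's inequality with the weight $s^{1-2H}$ to absorb the $\Delta Z$ term into the coercive left-hand side, and Gronwall. The passage through the approximating family $(Y^{k,\varepsilon,\alpha},Z^{k,\varepsilon,\alpha})$ and the restriction to $[\rho,T]$ that you flag at the end is exactly how the paper justifies the identity for the limiting processes.
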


\begin{proof}
Similarly to (\ref{equation for penelized Y and Z}), we have, for $t\in(0,T]$,
\begin{equation}\label{equation which will be used}
\begin{array}
[c]{l}
\mathbb{E}|\Delta Y(t)|^{2}+2\mathbb{E}{\displaystyle\int_{t}^{T}}\dfrac{\hat{\sigma}(s)}{\sigma(s)}|\Delta Z(s)|^{2}ds
=-2\mathbb{E}{\displaystyle\int_{t}^{T}}\Delta Y(s)[\nabla\varphi_{\varepsilon}(Y^{\varepsilon}(s))-\nabla\varphi_{\delta}(Y^{\delta}(s))]ds\medskip\\
\qquad+2\mathbb{E}{\displaystyle\int_{t}^{T}}\Delta Y(s)
[f(s,\eta(s),Y^{\varepsilon}(s),Z^{\varepsilon}(s))-f(s,\eta(s),Y^{\delta}(s),Z^{\delta}(s))]ds
\end{array}
\end{equation}
where $\Delta Y\left(  t\right)  =Y^{\varepsilon}(t)-Y^{\delta}(t)$ and
$\Delta Z\left(  t\right)  =Z^{\varepsilon}(t)-Z^{\delta}(t)$.  Then from
\[
d\left(s^{2H-1}\mathbb{E}\left[|\Delta Y(s)^{2}\right]\right)
=s^{2H-1}d\mathbb{E}\left[|\Delta Y(s)^{2}\right]+(2H-1)s^{2H-2}\mathbb{E}\left[|\Delta Y(s)^{2}\right]
\]
as well as (\ref{equation which will be used}), we deduce that for $t\in(0,T]$,
\begin{equation}\label{ineq Cauchy sequence}
\begin{array}
[c]{l}
t^{2H-1}\mathbb{E}|\Delta Y(t)|^{2}
+\mathbb{E}{\displaystyle\int_{t}^{T}}\left(2H-1\right) s^{2H-2}|\Delta Y(s)|^{2}ds
+\mathbb{E}{\displaystyle\int_{t}^{T}2}\dfrac{\hat{\sigma}(s)}{\sigma(s)}s^{2H-1}|\Delta Z(s)|^{2}ds\medskip\\
=-2\mathbb{E}{\displaystyle\int_{t}^{T}}
s^{2H-1}\Delta Y(s)[\nabla\varphi_{\varepsilon}(Y^{\varepsilon}(s))-\nabla\varphi_{\delta}(Y^{\delta}(s))]ds\medskip\\
\quad+2\mathbb{E}{\displaystyle\int_{t}^{T}}s^{2H-1}\Delta Y(s)
[f(s,\eta(s),Y^{\varepsilon}(s),Z^{\varepsilon}(s))-f(s,\eta(s),Y^{\delta}(s),Z^{\delta}(s))]ds.
\end{array}
\end{equation}
Since
\[
\begin{array}
[c]{l}
2s^{2H-1}|\Delta Y\left(  s\right)| |f(s,\eta(s),Y^{\varepsilon}
(s),Z^{\varepsilon}(s))-f(s,\eta(s),Y^{\delta}(s),Z^{\delta}(s))|\medskip\\
\quad\leq2Ls^{2H-1}|\Delta Y\left(  s\right)  |^{2}+2Ls^{2H-1}|\Delta Y\left(
s\right)  |~|\Delta Z\left(  s\right)  |\medskip\\
\quad\leq\Big(2L+L^{2}M\dfrac{1}{s^{2H-1}}\Big)s^{2H-1}|\Delta Y\left(
s\right)  |^{2}+\dfrac{1}{M}s^{2\left(  2H-1\right)  }|\Delta Z\left(
s\right)  |^{2},
\end{array}
\]
(\ref{ineq Cauchy sequence}) yields [Recall(\ref{properties for sigma}-$b$)]
\begin{equation}\label{ineq Cauchy sequence 2}
\begin{array}
[c]{l}
t^{2H-1}\mathbb{E}|\Delta Y(t)|^{2}+\dfrac{2}{M}\mathbb{E}{\displaystyle\int_{t}^{T}}s^{2\left(  2H-1\right)  }|\Delta Z(s)|^{2}ds
+\mathbb{E}{\displaystyle\int_{t}^{T}}\left( 2H-1\right)  s^{2H-2}|\Delta Y(s)|^{2}ds\medskip\\
\quad\leq\mathbb{E}{\displaystyle\int_{t}^{T}}\Big(2L+L^{2}M\dfrac{1}{s^{2H-1}}\Big)s^{2H-1}|\Delta Y(s)|^{2}ds
+\dfrac{1}{M}\mathbb{E}{\displaystyle\int_{t}^{T}}s^{2(2H-1)}|\Delta Z(s)|^{2}ds\medskip\\
\quad\quad-2\mathbb{E}{\displaystyle\int_{t}^{T}}s^{2H-1}\Delta Y(s)
[\nabla\varphi_{\varepsilon}(Y^{\varepsilon}(s))-\nabla\varphi_{\delta}(Y^{\delta}(s))]ds.
\end{array}
\end{equation}
By using the following inequality
\[
\left(\nabla\varphi_{\varepsilon}(u)-\nabla\varphi_{\delta}(v)\right)
(u-v)\geq-\left(\varepsilon+\delta\right)  \nabla\varphi_{\varepsilon
}(u)\nabla\varphi_{\delta}(v).
\]
and Proposition \ref{bound of approxim phi} $(i)$ as well as Gronwall's inequality, we conclude from (\ref{ineq Cauchy sequence 2}) that there exists
$C>0$ such that%
\[%
\begin{array}
[c]{l}
t^{2H-1}\mathbb{E}|\Delta Y(t)|^{2}+\mathbb{E}{\displaystyle\int_{t}^{T}}s^{2(2H-1)}|\Delta Z(s)|^{2}ds\medskip\\
\quad\leq C\left(\varepsilon+\delta\right)  \mathbb{E}{\displaystyle\int_{t}^{T}}s^{2H-1}|\nabla\varphi_{\varepsilon}(Y^{\varepsilon}(s))|
|\nabla\varphi_{\delta}(Y^{\delta}(s))|ds\medskip\\
\quad\leq C\left(\varepsilon+\delta\right)\mathbb{E}{\displaystyle\int_{t}^{T}}\left(  s^{2H-1}|\nabla\varphi_{\varepsilon}(Y^{\varepsilon}(s))|^{2}
+s^{2H-1}|\nabla\varphi_{\delta}(Y^{\delta}(s))|^{2}\right)ds\leq C(\varepsilon+\delta)\Gamma_{2}(T),
\end{array}
\]
and the proof is complete.\hfill
\end{proof}

\subsection{Proof of the Existence of the solution}

\begin{proof}[Proof of Theorem \ref{existence of fractional BSVI}]
For arbitrary $\rho>0$, by Proposition \ref{Cauchy sequence}, there exist
$(Y,Z)\in\mathcal{\bar{V}}_{T}^{H}\times\mathcal{\bar{V}}_{T}^{2H-1/2}$ such that
\begin{equation}\label{equation 9}
\sup\limits_{s\in[0,T]}s^{2H-1}\mathbb{E}|Y^{\varepsilon}(s)-Y(s)|^{2}ds\rightarrow0
\text{ and }{\int_{0}^{T}}s^{2(2H-1)}\mathbb{E}|Z^{\varepsilon}(s)-Z(s)|^{2}ds\rightarrow0,
\end{equation}
From Proposition \ref{bound of approxim phi} $(iii)$, we deduce
that
\begin{equation}\label{equation 10}
\begin{array}
[c]{l}
\lim\limits_{\varepsilon\rightarrow0}\mathbb{E}\left\vert Y^{\varepsilon}(t)-J_{\varepsilon}\left(  Y^{\varepsilon}(t)\right)\right\vert ^{2}=0
\text{ for all } t\in[0,T],\medskip\\
\lim\limits_{\varepsilon\rightarrow0}J_{\varepsilon}(Y^{\varepsilon})=Y \text{ in }\mathcal{\bar{V}}_{T}^{H}.
\end{array}
\end{equation}
For each $\varepsilon>0$, let $U^{\varepsilon}(t)=\nabla\varphi_{\varepsilon}\left(Y^{\varepsilon}(t)\right)$, $t\in[0,T]$.
The process $U^{\varepsilon}$ belongs to the space $\mathcal{\bar{V}}_{T}^{H}$ (see Lemma \ref{property miu bar} in the Appendix).
From Proposition \ref{bound of approxim phi} $(i)$, we obtain that
\[
\left\vert \left\vert U^{\varepsilon}\right\vert \right\vert_{H}^{2}=\mathbb{E}{\int_{0}^{T}}s^{2H-1}|U^{\varepsilon}(s)|^{2}ds
\leq C\Gamma_{2}(T),~\varepsilon>0.
\]
Hence, there exists a subsequence $\varepsilon_{n}\rightarrow0$ and a process $U\in\mathcal{\bar{V}}_{T}^{H}$ such that
\begin{equation}\label{equation 11}
U^{\varepsilon_{n}}\xrightarrow[\;\;\;\;\;\;\;\;\;\;\;\;\;]{\varepsilon_{n}\rightarrow0}U,
\text{ weakly in the Hilbert space }\mathcal{\bar{V}}_{T}^{H}.
\end{equation}
Consequently,
\[
\mathbb{E}{\int_{0}^{T}}s^{2H-1}\left\vert U(s)\right\vert^{2}ds
\leq\liminf\limits_{\varepsilon_{n}\rightarrow0}\mathbb{E}{\int_{0}^{T}}s^{2H-1}\left\vert U^{\varepsilon_{n}}(s)\right\vert ^{2}ds
\leq C~\Gamma_{2}(T).
\]
From Eq. (\ref{penalized fBSVI}) we have
\[
\begin{array}
[c]{l}
{\displaystyle\int_{t}^{T}}Z^{\varepsilon_{n}}(s)\delta B^{H}(s)
=-Y^{\varepsilon_{n}}(t)-{\displaystyle\int_{t}^{T}}U^{\varepsilon_{n}}(s)ds+\xi
+{\displaystyle\int_{t}^{T}}f\left(  s,\eta(s),Y^{\varepsilon_{n}}(s),Z^{\varepsilon_{n}}(s)\right)ds\medskip\\
\qquad\qquad\qquad\qquad :=\theta^{\varepsilon_{n}}(t),~t\in(0,T],~n\ge1.
\end{array}
\]
On the other hand, since $Z^{\varepsilon_{n}}\mathbf{1}_{[\rho,T]}$ converges to $Z\mathbf{1}_{[\rho,T]}$ in $L^{2}(\Omega,\mathcal{F},P;\mathcal{H})$ for all $\rho>0$ and $Z^{\varepsilon_{n}}I_{[\rho,T]}\in Dom(\delta)$, then we apply Definition \ref{divergence operator} and we obtain
for all $F\in\mathcal{P}_{T}$,
\[
\begin{array}
[c]{l}
\mathbb{E}\left( \left\langle D_{\cdot}^{H}F,Z(\cdot)\mathbf{1}_{[\rho,T]}(\cdot)\right\rangle _{T}\right)
=\lim\limits_{n\rightarrow\infty}\mathbb{E}\left(\left\langle D_{\cdot}^{H}F,Z^{\varepsilon_{n}}(\cdot)\mathbf{1}_{[\rho,T]}(\cdot)\right\rangle _{T}\right) \medskip\\
=\lim\limits_{n\rightarrow\infty}\mathbb{E}\left( F{{\displaystyle\int_{\rho}^{T}}}Z^{\varepsilon_{n}}(s)(s)\delta B^{H}(s)\right)
=\lim\limits_{n\rightarrow\infty}\mathbb{E}(F~\theta^{\varepsilon_{n}}(\rho))=\mathbb{E}(F~\theta(\rho)),
\end{array}
\]
where it follows from (\ref{equation 9}) and (\ref{equation 11}) that
\[
\theta(t)=-Y(t)-{\displaystyle\int_{t}^{T}}U(s)ds+\xi
+{\displaystyle\int_{t}^{T}}f\left(  s,\eta(s),Y(s),Z(s)\right)ds
\]
is the weak limit in $L^{2}(\Omega,\mathcal{F},P)$ of $\theta^{\varepsilon_{n}}(t)$ as $n\rightarrow\infty$, $t\in(0,T]$.
From the definition of the divergence operator, it follows that $Z\mathbf{1}_{[\rho,T]}\in Dom(\delta)$ and
$\delta(Z\mathbf{1}_{[\rho,T]})=\theta(\rho)$, $P$-a.s. Consequently, since $\rho>0$ is arbitrarily chosen, we have
\begin{equation}\label{equation 12}
Y(t)+{\displaystyle\int_{t}^{T}}U(s)ds=\xi
+{\displaystyle\int_{t}^{T}}f\left( s,\eta(s),Y(s),Z(s)\right)ds-{\displaystyle\int_{t}^{T}}Z(s)\delta B^{H}(s), \text{ for all } t\in(0,T].
\end{equation}
Moreover, since $U^{\varepsilon}(t)\in\partial\varphi(J_{\varepsilon}(Y^{\varepsilon}(t))$, for $t\in[0,T]$, we have
\[
U^{\varepsilon}(t)(V(t)-J_{\varepsilon}(Y^{\varepsilon}(t)))+\varphi(J_{\varepsilon}(Y^{\varepsilon}(t)))
\leq\varphi(V(t)), \text{ for all }V\in\mathcal{\bar{V}}_{T}^{H}, ~t\in[0,T],
\]
and we deduce that for all $A\times[a,b]\subset\Omega\times[0,T]$, $A\in\mathcal{F}$,
\[
\begin{array}
[c]{l}
\mathbb{E}\Big({\displaystyle\int_{a}^{b}}s^{2H-1}\mathbf{1}_{A}U^{\varepsilon}(t)\big(V(t)-J_{\varepsilon}(Y^{\varepsilon}(t))\big)dt\Big)
+\mathbb{E}\Big({\displaystyle\int_{a}^{b}}s^{2H-1}\mathbf{1}_{A}\varphi(J_{\varepsilon}(Y^{\varepsilon}(t)))dt\Big)\medskip\\
\quad\leq\mathbb{E}\Big({\displaystyle\int_{a}^{b}}s^{2H-1}\mathbf{1}_{A}\varphi(V(t))dt\Big).
\end{array}
\]
Considering that $\varphi$ is a proper convex l.s.c. function; hence (\ref{equation 10}) and (\ref{equation 11}) yield that
\[
\begin{array}
[c]{l}
\mathbb{E}\Big({\displaystyle\int_{a}^{b}}s^{2H-1}\mathbf{1}_{A}U(t)(V(t)-Y(t))dt\Big)
+\mathbb{E}\Big({\displaystyle\int_{a}^{b}}s^{2H-1}\mathbf{1}_{A}\varphi(Y(t))dt\Big)\medskip\\
\quad
\leq\mathbb{E}\Big({\displaystyle\int_{a}^{b}}s^{2H-1}\mathbf{1}_{A}\varphi(V(t))dt\Big),  \text{ for all }  A\times[a,b]\subset\Omega\times[0,T].
\end{array}
\]
Therefore,
\[
U(t)(V(t)-Y(t))+\varphi(Y(t))\leq\varphi(V(t))~dP\otimes dt~a.e.\text{ on }\Omega\times[0,T],
\]
which means that
\[
(Y(t),U(t))\in\partial\varphi,~ dP\otimes dt~a.e.\text{ on }\Omega\times[0,T].
\]
This together with (\ref{equation 12}) complete the proof.
\hfill
\end{proof}
\medskip

\textbf{ Acknowledgement}\quad  The authors wish to express their thanks
to Rainer Buckdahn, Yaozhong Hu, Shige Peng, and Aurel R\u{a}\c{s}canu for their useful
suggestions and discussions.
\newpage

\noindent\textbf{Appendix}

\begin{theorem}\label{general Ito formula for the divergence integral}
Let $\psi$ be a function of class $C^{1,2}([0,T]\times\mathbb{R})$. Assume that $u$ is a process in
$\mathcal{V}_{T}$ and $f\in C_{pol}^{0,1}([0,T]\times\mathbb{R})$. Let
\[
X_{t}=X_{0}+{\displaystyle\int_{0}^{t}}f(s,\eta(s))ds+{\displaystyle\int_{0}^{t}}u_{s}\delta B^{H}(s), ~s\in[0,T]
\]
Then for all $t\in[0,T]$, the following formula holds
\begin{equation}\label{1 general Ito's formula for the divergence integral}
\begin{array}
[c]{l}
\psi(t,X_{t})=\psi(0,X_{0})+{\displaystyle\int_{0}^{t}}{\displaystyle\dfrac{\partial }{\partial s}}\psi(s,X_{s})ds
+{\displaystyle\int_{0}^{t}}{\displaystyle\dfrac{\partial }{\partial x}}\psi(s,X_{s})f(s,X_{s})ds\medskip\\
\qquad\qquad+{\displaystyle\int_{0}^{t}}{\displaystyle\dfrac{\partial }{\partial x}}\psi(s,X_{s})u_{s}\delta B^{H}(s)
+{\displaystyle\int_{0}^{t}}{\displaystyle\dfrac{\partial^{2}}{\partial x^{2}}}\psi(s,X_{s})u_{s}\mathbb{D}^{H}_{s}X_{s}ds.
\end{array}
\end{equation}
\end{theorem}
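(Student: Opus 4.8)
The plan is to deduce (\ref{1 general Ito's formula for the divergence integral}) from the Al\`{o}s--Nualart formula of Theorem \ref{Ito formula for the divergence integral}, obtaining it first for a time-independent $\psi$ and a pure divergence integral, and then restoring successively the drift and the time variable. Throughout I abbreviate $f_{s}=f(s,\eta(s))$; let me also note that the coefficient of $\frac{\partial}{\partial x}\psi$ in the drift term of (\ref{1 general Ito's formula for the divergence integral}) has to be read as $f(s,\eta(s))$ (rather than $f(s,X_{s})$), in accordance with the quadratic case of Theorem \ref{Ito formula for divergence type integral}.

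First I would fix $\psi\in C^{2}(\mathbb{R})$ and consider the driftless process $\tilde{X}_{t}=X_{0}+\int_{0}^{t}u_{s}\delta B^{H}(s)$. Since $u=u(\cdot,\eta(\cdot))$ with $u\in\mathcal{V}_{T}$, Lemma \ref{property of VT} already gives $u\in\mathbb{L}^{1,2}_{H}\subset Dom(\delta)$; the $C_{pol}^{1,3}$ regularity of the generating function, together with the moment bounds (\ref{Lp estimates}) for $\eta$, upgrades this to the regularity $u\in\mathbb{D}^{2,2}_{loc}(|\mathcal{H}|)$ and $(\mathbb{E}|u|^{2})^{1/2}\in\mathcal{H}$ demanded by Theorem \ref{Ito formula for the divergence integral}, while the almost sure continuity of $t\mapsto\tilde{X}_{t}$ follows from the $L^{2}$ (indeed $L^{4}$) estimates of Theorem \ref{Th.1_Hu 2005}. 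Applying Theorem \ref{Ito formula for the divergence integral} then yields $\psi(\tilde{X}_{t})$ together with its two second-order $H(2H-1)$-terms, and the key algebraic step is to collapse these into a single term. Recognising $H(2H-1)|s-r|^{2H-2}=\phi(s-r)$ and using Fubini together with the definition (\ref{another derivative}) of $\mathbb{D}^{H}$, the first of them rewrites as
\[
H(2H-1)\int_{0}^{t}\psi''(\tilde{X}_{s})u_{s}\int_{0}^{T}|s-r|^{2H-2}\Big(\int_{0}^{s}D_{r}u_{\theta}\,\delta B^{H}(\theta)\Big)dr\,ds=\int_{0}^{t}\psi''(\tilde{X}_{s})u_{s}\Big(\int_{0}^{s}\mathbb{D}^{H}_{s}u_{\theta}\,\delta B^{H}(\theta)\Big)ds,
\]
and the second is $\int_{0}^{t}\psi''(\tilde{X}_{s})u_{s}\big(\int_{0}^{s}\phi(s-\theta)u_{\theta}\,d\theta\big)ds$. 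Summing and invoking the representation (\ref{Malliavin derivative of X}) of $\mathbb{D}^{H}_{s}\tilde{X}_{s}$, their sum is exactly $\int_{0}^{t}\psi''(\tilde{X}_{s})u_{s}\,\mathbb{D}^{H}_{s}\tilde{X}_{s}\,ds$, which is (\ref{1 general Ito's formula for the divergence integral}) in this reduced case.

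To restore the drift $A_{t}=\int_{0}^{t}f_{s}\,ds$ and the time dependence of $\psi$, I would argue along a partition $0=t_{0}<\dots<t_{n}=t$ and telescope $\psi(t,X_{t})-\psi(0,X_{0})$ into a time-increment part, which converges to $\int_{0}^{t}\frac{\partial}{\partial s}\psi(s,X_{s})\,ds$ by the $C^{1}$-continuity in time and the a.s. continuity of $X$, and a spatial-increment part, to which the reduced formula of the previous paragraph is applied on each $[t_{i},t_{i+1}]$ after a first-order Taylor expansion in the finite-variation drift increment $\int_{t_{i}}^{t_{i+1}}f_{s}\,ds$. The drift then contributes the first-order term $\int_{0}^{t}\frac{\partial}{\partial x}\psi(s,X_{s})f_{s}\,ds$, its quadratic self-contribution being $O(|\Delta t|)$ and vanishing, while its interaction with the divergence integral is precisely what upgrades $\mathbb{D}^{H}_{s}\tilde{X}_{s}$ to the full $\mathbb{D}^{H}_{s}X_{s}$ in the trace term (compare the cross-term computation in the proof of Theorem \ref{Ito formula for divergence type integral}). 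Alternatively, one may invoke the drift-version of the Al\`{o}s--Nualart formula directly for the full $X$, the drift $f(\cdot,\eta(\cdot))\in\mathcal{V}_{T}$ being itself smoothly Malliavin differentiable, and then collapse the second-order terms exactly as above.

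The main obstacle I anticipate is not the algebra of the collapsing step but the analytic justification of the limiting procedures, because divergence integrals are not martingales and no maximal inequality is available: the convergence of the Riemann sums, the interchange of the limit with the divergence operator, and the membership of the integrands in $Dom(\delta)$ at each stage must all be controlled through the $L^{2}$-isometry of Theorem \ref{Th.1_Hu 2005} and the closedness of $\delta$, rather than through pathwise or martingale arguments. A secondary but genuine technical point is verifying, for $u\in\mathcal{V}_{T}$, the full set of hypotheses of Theorem \ref{Ito formula for the divergence integral} (the local $\mathbb{D}^{2,2}$-regularity in $|\mathcal{H}|$ and the continuity of $X$), for which the polynomial-growth structure of $\mathcal{V}_{T}$ and the estimates (\ref{Lp estimates}) are essential.
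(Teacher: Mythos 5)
Your outline is essentially sound and the key algebraic point is right, but you take a partially different route from the paper. The paper does not pass through the driftless case at all: it reruns the whole Riemann--sum/Taylor argument of Theorem 8 of \cite{AN-03} directly for the time-dependent $\psi$ and the full process $X$ with drift. Concretely, it localizes so that $\psi$ and its derivatives are bounded, telescopes $\psi(t,X_t)-\psi(0,X_0)$ over the partition $t_i=it/n$, converts each product $\frac{\partial}{\partial x}\psi(t_i,X_{t_i})\int_{t_i}^{t_{i+1}}u_s\,\delta B^H(s)$ into a divergence integral plus the duality correction $\langle D^H(\frac{\partial}{\partial x}\psi(t_i,X_{t_i})),u\mathbf{1}_{[t_i,t_{i+1}]}\rangle_T$, splits that correction into three pieces (drift, symmetric, and iterated-integral contributions to $D^HX_{t_i}$), and then carries out seven separate $L^1$-convergence steps, identifying the limit of the divergence integrals at the end via duality against $\mathcal{P}_T$ and closedness of $\delta$. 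Your first stage --- invoking Theorem \ref{Ito formula for the divergence integral} for the driftless process and collapsing its two $H(2H-1)$-terms into $\int_0^t\psi''u_s\,\mathbb{D}^H_s\tilde X_s\,ds$ via (\ref{another derivative}) and (\ref{Malliavin derivative of X}) --- is correct and is a clean way to see where the trace term comes from; your observation that $f(s,X_s)$ in the statement should read $f(s,\eta(s))$ is also right and matches the paper's Step 3.

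The caveat concerns your second stage. Restoring the drift and the time variable by telescoping over a partition is not a light-weight afterthought: once you Taylor-expand $\psi(t_i,X_{t_{i+1}})-\psi(t_i,X_{t_i})$ and meet the term $\frac{\partial}{\partial x}\psi(t_i,X_{t_i})\int_{t_i}^{t_{i+1}}u_s\,\delta B^H(s)$, you are forced into exactly the duality decomposition and the Steps 1--7 convergence analysis of the paper, at which point the preliminary driftless reduction has bought you essentially nothing (the $\mathbb{D}^H_sA_s$ contribution, which you correctly locate in the ``interaction'' cross term, only appears through that duality bracket, not through the driftless formula). Your proposed shortcut of ``invoking the drift-version of the Al\`os--Nualart formula directly for the full $X$'' is not available: no such statement is quoted in the paper, and proving it is precisely the content of Theorem \ref{general Ito formula for the divergence integral}, so that route is circular. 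So the proposal is acceptable as a strategy, but a complete write-up would have to supply the same Riemann-sum convergence work the paper does; you do at least name the right obstacles (no martingale tools, identification of limits of divergence integrals through the $L^2$-isometry and the closedness of $\delta$, and verification of the hypotheses of Theorem \ref{Ito formula for the divergence integral} for $u\in\mathcal{V}_T$).
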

\begin{remark}\label{remark for genelized ito formula}
Since $u\in\mathcal{V}_{T}$, we know that $u$ is adapted. Then, due to the definition of $\mathbb{D}^{H}_{s}$ [see (\ref{another derivative})], we have
\[
\begin{array}
[c]{l}
\mathbb{D}^{H}_{s}X_{s}
={\displaystyle\int_{0}^{s}}\phi(s-r)\left({\displaystyle\int_{0}^{s}}D_{r}f(\theta,\eta(\theta))d\theta\right)dr
+{\displaystyle\int_{0}^{s}}\left({\displaystyle\int_{0}^{\theta}}\phi(s-r)D_{r}u_{\theta} dr\right)\delta B^{H}(\theta)\medskip\\
\qquad\qquad+{\displaystyle\int_{0}^{s}}\phi(s-\theta)u_{\theta}d\theta\medskip\\
={\displaystyle\int_{0}^{T}}\phi(s-r)\left({\displaystyle\int_{0}^{s}}D_{r}f(\theta,\eta(\theta))d\theta\right)dr
+{\displaystyle\int_{0}^{T}}\phi(s-r)\left({\displaystyle\int_{0}^{s}}D_{r}u_{\theta}\delta B^{H}(\theta)\right) dr\medskip\\
\qquad\qquad+{\displaystyle\int_{0}^{s}}\phi(s-\theta)u_{\theta}d\theta .
\end{array}
\]
\end{remark}
\begin{proof}[Proof of Theorem \ref{general Ito formula for the divergence integral}]
We follow the similar discussion as in the proof of Theorem 8 \cite{AN-03}. Here, we only give the sketch of the proof.

First, we mention that since $u\in \mathcal{V}_{T}$,  we have $\mathbb{E}|u_{s}|^{2}+\mathbb{E}|D^{H}_{\tau}u_{s}|^{2}+\mathbb{E}|D^{H}_{\tau_{1}}D^{H}_{\tau_{2}}u_{s}|^{2}\leq C$, for all $s,\tau,\tau_{1},\tau_{2}$, where $C$ is a suitable constant.

Similar to the discussion as in the proof of Theorem 8 \cite{AN-03}, we can assume that $\psi$, $\dfrac{\partial \psi}{\partial t}$, $\dfrac{\partial \psi}{\partial x}$, $\dfrac{\partial^{2} \psi}{\partial x^{2}}$ are bounded.
Set $t_{i}=\dfrac{it}{n}$, $0\leq i\leq n$. Then
\[
\begin{array}
[c]{l}
\psi(t,X_{t})-\psi(0,X_{0})
=\sum\limits_{i=0}\limits^{n-1}\left[\psi(t_{i+1},X_{t_{i+1}})-\psi(t_{i},X_{t_{i+1}})+\psi(t_{i},X_{t_{i+1}})-\psi(t_{i},X_{t_{i}})\right]\medskip\\
=\sum\limits_{i=0}\limits^{n-1}\left[\dfrac{\partial}{\partial t}\psi(\bar{t}_{i},X_{t_{i+1}})(t_{i+1}-t_{i})+
\dfrac{\partial}{\partial x}\psi(t_{i},X_{t_{i}})(X_{t_{i+1}}-X_{t_{i}})\right]\medskip\\
\qquad\qquad\qquad\qquad\qquad\qquad
+\dfrac{1}{2}\sum\limits_{i=0}\limits^{n-1}\dfrac{\partial^{2}}{\partial x^{2}}\psi(t_{i},\bar{X}_{t_{i}})(X_{t_{i+1}}-X_{t_{i}})^{2}\medskip\\
=\sum\limits_{i=0}\limits^{n-1}\left[\dfrac{\partial}{\partial t}\psi(\bar{t}_{i},X_{t_{i+1}})(t_{i+1}-t_{i})+
\dfrac{\partial}{\partial x}\psi(t_{i},X_{t_{i}}){\displaystyle\int_{t_{i}}^{t_{i+1}}}f(s,\eta(s))ds\right]\medskip\\
\qquad\qquad
\sum\limits_{i=0}\limits^{n-1}\dfrac{\partial}{\partial x}\psi(t_{i},X_{t_{i}}){\displaystyle\int_{t_{i}}^{t_{i+1}}}u_{s}\delta B^{H}(s)+\dfrac{1}{2}\sum\limits_{i=0}\limits^{n-1}\dfrac{\partial^{2}}{\partial x^{2}}\psi(t_{i},\bar{X}_{t_{i}})(X_{t_{i+1}}-X_{t_{i}})^{2}
\end{array}
\]
where $\bar{t}_{i}\in[t_{i},t_{i+1}]$ and $\bar{X}_{t_{i}}$ denotes a random intermediate point between $X_{t_{i}}$ and $X_{t_{i+1}}$.
Since
\[
\dfrac{\partial}{\partial x}\psi(t_{i},X_{t_{i}}){\displaystyle\int_{t_{i}}^{t_{i+1}}}u_{s}\delta B^{H}(s)
={\displaystyle\int_{t_{i}}^{t_{i+1}}}\dfrac{\partial}{\partial x}\psi(t_{i},X_{t_{i}}) u_{s}\delta B^{H}(s)
+\langle D^{H}(\dfrac{\partial}{\partial x}\psi(t_{i},X_{t_{i}})), u\mathbf{1}_{[t_{i},t_{i+1}]}\rangle_{T},
\]
[for $\langle\cdot,\cdot\rangle_{T}$, see (\ref{inner product})].
Observe that from our assumption, $\dfrac{\partial}{\partial x}\psi(t_{i},X_{t_{i}}) u\in \mathbb{D}^{1,2}(|\mathcal{H}|)$ and all the terms in the above inequality are square integrable. Moreover,
\[
\begin{array}
[c]{l}
\langle D^{H}(\dfrac{\partial}{\partial x}\psi(t_{i},X_{t_{i}})), u\mathbf{1}_{t_{i},t_{i+1}}\rangle_{T}
=\langle \dfrac{\partial^{2}}{\partial x^{2}}\psi(t_{i},X_{t_{i}}){\displaystyle\int_{0}^{t_{i}}}D^{H}f(\theta,\eta(\theta))d\theta,
u\mathbf{1}_{[t_{i},t_{i+1}]}\rangle_{T}\medskip\\
\qquad+\langle \dfrac{\partial^{2}}{\partial x^{2}}\psi(t_{i},X_{t_{i}})u\mathbf{1}_{[0,t_{i}]}, u\mathbf{1}_{[t_{i},t_{i+1}]}\rangle_{T}
+\langle \dfrac{\partial^{2}}{\partial x^{2}}\psi(t_{i},X_{t_{i}}){\displaystyle\int_{0}^{t_{i}}}D^{H}u_{\theta}\delta B^{H}(\theta),
u\mathbf{1}_{[t_{i},t_{i+1}]}\rangle_{T}.
\end{array}
\]
Now we use the following several steps to proof our theorem.

\noindent\textbf{Step 1}
The term
\[
\dfrac{1}{2}\sum\limits_{i=0}\limits^{n-1}\dfrac{\partial^{2}}{\partial x^{2}}\psi(t_{i},\bar{X}_{t_{i}})(X_{t_{i+1}}-X_{t_{i}})^{2}
\]
converges to $0$ in $L^{1}(\Omega,\mathcal{F},P)$ as $n\rightarrow\infty$. In fact,
\[
\begin{array}
[c]{l}
\left|\dfrac{1}{2}\sum\limits_{i=0}\limits^{n-1}\dfrac{\partial^{2}}{\partial x^{2}}\psi(t_{i},\bar{X}_{t_{i}})(X_{t_{i+1}}-X_{t_{i}})^{2}\right|
\medskip\\
\leq\sum\limits_{i=0}\limits^{n-1}\left|\dfrac{\partial^{2}}{\partial x^{2}}\psi(t_{i},\bar{X}_{t_{i}})\right|
\left({\displaystyle\int_{t_{i}}^{t_{i+1}}}f(\theta,\eta(\theta))ds\right)^{2}
+\sum\limits_{i=0}\limits^{n-1}\left|\dfrac{\partial^{2}}{\partial x^{2}}\psi(t_{i},\bar{X}_{t_{i}})\right|
\left({\displaystyle\int_{t_{i}}^{t_{i+1}}}u_{\theta}\delta B^{H}(\theta)\right)^{2}.
\end{array}
\]
Then our result holds because of Proposition 7 \cite{AN-03} as well as
\[
\mathbb{E}\sum\limits_{i=0}\limits^{n-1}\left({\displaystyle\int_{t_{i}}^{t_{i+1}}}f(\theta,\eta(\theta))ds\right)^{2}
\leq \dfrac{t}{n}{\displaystyle\int_{0}^{t}}\mathbb{E}|f(\theta,\eta(\theta))|^{2}ds\rightarrow0, \text{ as }n\rightarrow\infty.
\]

\noindent\textbf{Step 2}
The term
\[
\sum\limits_{i=0}\limits^{n-1}\dfrac{\partial}{\partial t}\psi(\bar{t}_{i},X_{t_{i+1}})(t_{i+1}-t_{i})
\rightarrow{\displaystyle\int_{0}^{t}}\dfrac{\partial}{\partial t}\psi(s,X_{s})ds, \text{ in }L^{1}(\Omega,\mathcal{F},P)
\text{ as } n\rightarrow\infty,
\]
by the dominate convergence theorem and the continuity of $\dfrac{\partial\psi}{\partial t}$.

\noindent\textbf{Step 3}
The term
\[
\begin{array}
[c]{l}
\sum\limits_{i=0}\limits^{n-1}\dfrac{\partial}{\partial x}\psi(t_{i},X_{t_{i}}){\displaystyle\int_{t_{i}}^{t_{i+1}}}f(s,\eta(s))ds\medskip\\
\rightarrow{\displaystyle\int_{0}^{t}}\dfrac{\partial}{\partial x}\psi(s,X_{s})f(s,\eta(s))ds, \text{ in }L^{1}(\Omega,\mathcal{F},P)
\text{ as } n\rightarrow\infty,
\end{array}
\]
by the dominate convergence theorem and the continuity of $\dfrac{\partial\psi}{\partial x}$.

\noindent\textbf{Step 4}
The term
\[
\begin{array}
[c]{l}
\sum\limits_{i=0}\limits^{n-1}\langle \dfrac{\partial^{2}}{\partial x^{2}}\psi(t_{i},X_{t_{i}}){\displaystyle\int_{0}^{t_{i}}}D^{H}f(\theta,\eta(\theta))d\theta,
u\mathbf{1}_{[t_{i},t_{i+1}]}\rangle_{T}\medskip\\
\rightarrow {\displaystyle\int_{0}^{t}}{\displaystyle\dfrac{\partial^{2}}{\partial x^{2}}}\psi(s,X_{s})u_{s}\left({\displaystyle\int_{0}^{T}}
\phi(s-r)\left({\displaystyle\int_{0}^{s}}D_{r}f(\theta,\eta(\theta))d\theta \right)dr\right)ds
\end{array}
\]
in $L^{1}(\Omega,\mathcal{F},P)$ as $n\rightarrow\infty$. Indeed
\[
\begin{array}
[c]{l}
\langle {\displaystyle\int_{0}^{t_{i}}}D^{H}f(\theta,\eta(\theta))d\theta,
u\mathbf{1}_{[t_{i},t_{i+1}]}\rangle_{T}
=
{\displaystyle\int_{0}^{T}}{\displaystyle\int_{t_{i}}^{t_{i+1}}}\left({\displaystyle\int_{0}^{t_{i}}}D^{H}_{\mu}f(\theta,\eta(\theta))d\theta\right) u_{s}\phi(s-\mu)dsd\mu.
\end{array}
\]
Then
\[
\begin{array}
[c]{l}
\mathbb{E}\Bigg[\Big|\sum\limits_{i=0}\limits^{n-1}\langle \dfrac{\partial^{2}}{\partial x^{2}}\psi(t_{i},X_{t_{i}}){\displaystyle\int_{0}^{t_{i}}}D^{H}f(\theta,\eta(\theta))d\theta,
u\mathbf{1}_{[t_{i},t_{i+1}]}\rangle_{T}\medskip\\
\qquad-{\displaystyle\int_{0}^{t}}{\displaystyle\dfrac{\partial^{2}}{\partial x^{2}}}\psi(s,X_{s})u_{s}\left({\displaystyle\int_{0}^{T}}
\phi(s-r)\left({\displaystyle\int_{0}^{s}}D_{r}^{H}f(\theta,\eta(\theta))d\theta \right)dr\right)ds\Big|\Bigg]\medskip\\
=\mathbb{E}\Bigg[\Big|\sum\limits_{i=0}\limits^{n-1}{\displaystyle\int_{t_{i}}^{t_{i+1}}}\dfrac{\partial^{2}}{\partial x^{2}}\psi(t_{i},X_{t_{i}})u_{s}\left({\displaystyle\int_{0}^{T}}\phi(s-\mu)
\left({\displaystyle\int_{0}^{t_{i}}}D^{H}_{\mu}f(\theta,\eta(\theta))d\theta\right) d\mu\right)ds\medskip\\
\qquad-\sum\limits_{i=0}\limits^{n-1}{\displaystyle\int_{t_{i}}^{t_{i+1}}}{\displaystyle\dfrac{\partial^{2}}{\partial x^{2}}}\psi(s,X_{s})u_{s}\left({\displaystyle\int_{0}^{T}}
\phi(s-r)\left({\displaystyle\int_{0}^{s}}D_{r}^{H}f(\theta,\eta(\theta))d\theta \right)dr\right)ds\Big|\Bigg]
\medskip\\
\rightarrow0,\text{ as } n\rightarrow\infty
\end{array}
\]
by the dominate convergence theorem and the continuity of $\dfrac{\partial^{2}\psi}{\partial x^{2}}$. Observing that
\[
\begin{array}
[c]{l}
\sum\limits_{i=0}\limits^{n-1}\left|{\displaystyle\int_{t_{i}}^{t_{i+1}}}\dfrac{\partial^{2}}{\partial x^{2}}\psi(t_{i},X_{t_{i}})u_{s}\left({\displaystyle\int_{0}^{T}}\phi(s-\mu)
\left({\displaystyle\int_{0}^{t_{i}}}D^{H}_{\mu}f(\theta,\eta(\theta))d\theta\right) d\mu\right)ds\right|\medskip\\
\leq M{\displaystyle\int_{0}^{t}}|u_{s}|\left({\displaystyle\int_{0}^{T}}
\phi(s-r)\left({\displaystyle\int_{0}^{T}}|D_{r}^{H}f(\theta,\eta(\theta))|d\theta \right)dr\right)ds
\end{array}
\]
and
\[
\mathbb{E}\left[{\displaystyle\int_{0}^{t}}|u_{s}|\left({\displaystyle\int_{0}^{T}}
\phi(s-r)\left({\displaystyle\int_{0}^{T}}|D_{r}^{H}f(\theta,\eta(\theta))|d\theta \right)dr\right)ds\right]<\infty
\]

\noindent\textbf{Step 5} Analogously to the Step 4, we get
\[
\begin{array}
[c]{l}
\sum\limits_{i=0}\limits^{n-1}\langle \dfrac{\partial^{2}}{\partial x^{2}}\psi(t_{i},X_{t_{i}})u\mathbf{1}_{[0,t_{i}]}, u\mathbf{1}_{[t_{i},t_{i+1}]}\rangle_{T}
\rightarrow {\displaystyle\int_{0}^{t}}{\displaystyle\dfrac{\partial^{2}}{\partial x^{2}}}\psi(s,X_{s})u_{s}\left({\displaystyle\int_{0}^{s}}
u_{\theta}\phi(s-\theta)d\theta \right)ds,
\end{array}
\]
in $L^{1}(\Omega,\mathcal{F},P)$ as $n\rightarrow\infty$.

\noindent\textbf{Step 6}
The term
\[
\begin{array}
[c]{l}
\sum\limits_{i=0}\limits^{n-1}\langle \dfrac{\partial^{2}}{\partial x^{2}}\psi(t_{i},X_{t_{i}})
{\displaystyle\int_{0}^{t_{i}}}D^{H}u_{\theta}\delta B^{H}(\theta),
u\mathbf{1}_{[t_{i},t_{i+1}]}\rangle_{T}\medskip\\
\rightarrow {\displaystyle\int_{0}^{t}}{\displaystyle\dfrac{\partial^{2}}{\partial x^{2}}}\psi(s,X_{s})u_{s}\left({\displaystyle\int_{0}^{T}}
\phi(s-r)\left({\displaystyle\int_{0}^{s}}D^{H}_{r}u_{\theta}\delta B^{H}(\theta) \right)dr\right)ds
\end{array}
\]
in $L^{1}(\Omega,\mathcal{F},P)$, as $n\rightarrow\infty$. Indeed,
we can adapt the discussion of \emph{step 3} in the proof of Theorem 8 \cite{AN-03}, by using  $\mathbb{E}|u_{s}|^{2}+\mathbb{E}|D^{H}_{\tau}u_{s}|^{2}+\mathbb{E}|D^{H}_{\tau_{1}}D^{H}_{\tau_{2}}u_{s}|^{2}\leq C$, for all $s,\tau,\tau_{1},\tau_{2}$, where $C$ is a suitable constant.

\noindent\textbf{Step 7}
The term
\[
\sum\limits_{i=0}\limits^{n-1}\dfrac{\partial}{\partial x}\psi(t_{i},X_{t_{i}})u\mathbf{1}_{[t_{i},t_{i+1}]}
\rightarrow \dfrac{\partial}{\partial x}\psi(\cdot,X_{\cdot})u_{\cdot}
~\text{ in } L^{1}(\Omega,\mathcal{F},P;|\mathcal{H}|).
\]
In fact
\[
\begin{array}
[c]{l}
\mathbb{E}\left|\left|\sum\limits_{i=0}\limits^{n-1}\dfrac{\partial}{\partial x}\psi(t_{i},X_{t_{i}})u\mathbf{1}_{[t_{i},t_{i+1}]}
-\dfrac{\partial}{\partial x}\psi(\cdot,X_{\cdot})u_{\cdot}\right|\right|_{|\mathcal{H}|}^{2}\medskip\\
=\mathbb{E}\sum\limits_{i=0}\limits^{n-1}\sum\limits_{j=0}\limits^{n-1}
{\displaystyle\int_{t_{i}}^{t_{i+1}}}{\displaystyle\int_{t_{j}}^{t_{j+1}}}\left|\left(\dfrac{\partial}{\partial x}\psi(t_{i},X_{t_{i}})
-\dfrac{\partial}{\partial x}\psi(s,X_{s})\right)u_{s}\right|\medskip\\
\qquad\qquad\qquad\qquad\times\left|\left(\dfrac{\partial}{\partial x}\psi(t_{i},X_{t_{i}})
-\dfrac{\partial}{\partial x}\psi(r,X_{r})\right)u_{r}\right|\phi(s-r)dsdr,
\end{array}
\]
which converges to $0$ by the dominated convergence theorem and the continuity of $\dfrac{\partial\psi}{\partial x}$. Note that
for $u\in\mathcal{V}_{T}$ and $\dfrac{\partial\psi}{\partial x}$, $\dfrac{\partial^{2}\psi}{\partial x^{2}}$ bounded, we have $\dfrac{\partial\psi}{\partial x}u\in\mathbb{D}^{1,2}(|\mathcal{H}|)\subset Dom(\delta)$.
Consequently, for $F\in\mathcal{P}_{T}$, we have
\[
\lim\limits_{n\rightarrow\infty}\mathbb{E}\left[F\sum\limits_{i=0}\limits^{n-1}{\displaystyle\int_{t_{i}}^{t_{i+1}}}
\dfrac{\partial}{\partial x}\psi(t_{i},X_{t_{i}})u_{s}\delta B^{H}(s)\right]
=\mathbb{E}\left[F{\displaystyle\int_{0}^{t}}
\dfrac{\partial}{\partial x}\psi(s,X_{s})u_{s}\delta B^{H}(s)\right]
\]
On the other hand, from steps 1$-$6, we know that $\sum\limits_{i=0}\limits^{n-1}{\displaystyle\int_{t_{i}}^{t_{i+1}}}
\dfrac{\partial}{\partial x}\psi(t_{i},X_{t_{i}})u_{s}\delta B^{H}(s)$ converges in $L^{1}(\Omega,\mathcal{F},P)$ to
\[
\begin{array}
[c]{l}
\psi(t,X_{t})-\psi(0,X_{0})+{\displaystyle\int_{0}^{t}}{\displaystyle\dfrac{\partial }{\partial s}}\psi(s,X_{s})ds
+{\displaystyle\int_{0}^{t}}{\displaystyle\dfrac{\partial }{\partial x}}\psi(s,X_{s})f(s,X_{s})ds\medskip\\
\qquad\qquad+H(2H-1){\displaystyle\int_{0}^{t}}{\displaystyle\dfrac{\partial^{2}}{\partial x^{2}}}\psi(s,X_{s})u_{s}\left({\displaystyle\int_{0}^{T}}
|s-r|^{2H-2}\left({\displaystyle\int_{0}^{s}}D_{r}f(\theta,\eta(\theta))d\theta \right)dr\right)ds\medskip\\
\qquad\qquad+H(2H-1){\displaystyle\int_{0}^{t}}{\displaystyle\dfrac{\partial^{2}}{\partial x^{2}}}\psi(s,X_{s})u_{s}\left({\displaystyle\int_{0}^{T}}
|s-r|^{2H-2}\left({\displaystyle\int_{0}^{s}}D_{r}u_{\theta}\delta B^{H}(\theta)\right)dr\right)ds\medskip\\
\qquad\qquad+H(2H-1){\displaystyle\int_{0}^{t}}{\displaystyle\dfrac{\partial^{2}}{\partial x^{2}}}\psi(s,X_{s})u_{s}\left({\displaystyle\int_{0}^{s}}
u_{\theta}|s-\theta|^{2H-2}d\theta \right)ds,
\end{array}
\]
as $n\rightarrow\infty$, which allows to complete the proof. \hfill
\end{proof}
\medskip

In particular, we have the following corollary.
\begin{corollary}\label{particular for general ito formula}
Let $f:[0,T]\rightarrow\mathbb{R}$ and $g:[0,T]\rightarrow\mathbb{R}$ be deterministic continuous  functions.
If
\[
X_{t}=X_{0}+\int_{0}^{t}g_{s}ds+\int_{0}^{t}f_{s}\delta B^{H}(s),~t\in[0,T],
\]
and $\psi\in C^{1,2}([0,T]\times\mathbb{R})$, then we have
\[
\begin{array}
[c]{l}
\psi(t,X_{t})=\psi(0,X_{0})+{\displaystyle\int_{0}^{t}}\dfrac{\partial}{\partial s} \psi(s,X_{s})ds
+{\displaystyle\int_{0}^{t}}\dfrac{\partial }{\partial x}\psi(s,X_{s})dX_{s}\medskip\\
\qquad\qquad\qquad
+\dfrac{1}{2}{\displaystyle\int_{0}^{t}}\dfrac{\partial^{2}}{\partial x^{2}}\psi(s,X_{s})~\left( \dfrac{d}{ds} \Vert f\Vert_{s}^{2}\right)ds,\;t\in[0,T].
\end{array}
\]
\end{corollary}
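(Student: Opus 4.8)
The plan is to deduce Corollary \ref{particular for general ito formula} from the generalized It\^o formula (\ref{1 general Ito's formula for the divergence integral}) of Theorem \ref{general Ito formula for the divergence integral}, the essential point being that for \emph{deterministic} integrands the correction term $\int_0^t \frac{\partial^2}{\partial x^2}\psi(s,X_s)\,u_s\,\mathbb{D}^H_s X_s\,ds$ collapses to the classical-looking term $\frac12\int_0^t\frac{\partial^2}{\partial x^2}\psi(s,X_s)\big(\frac{d}{ds}\|f\|_s^2\big)ds$. There is one regularity gap to bridge: Theorem \ref{general Ito formula for the divergence integral} requires the diffusion coefficient to lie in $\mathcal{V}_T$, hence to be $C^1$ in time, whereas here $f$ is only continuous. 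Accordingly I would first establish the formula under the temporary extra hypothesis $f\in C^1([0,T])$, and then remove it by approximation. At the outset I would also perform the usual reduction (identical to the one opening the proof of Theorem \ref{general Ito formula for the divergence integral}) to the case where $\psi$ and its derivatives $\partial_t\psi,\partial_x\psi,\partial_{xx}\psi$ are bounded.

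\textbf{Step 1 (the simplification).} Assume momentarily $f\in C^1([0,T])$. Then $f_s=\phi(s,\eta(s))$ with the $x$-independent $\phi(s,x):=f(s)\in C^{1,3}_{pol}$, so $u:=f\in\mathcal{V}_T$; and $g$, viewed as $g(s,x):=g(s)$, lies in $C^{0,1}_{pol}([0,T]\times\mathbb{R})$. Thus Theorem \ref{general Ito formula for the divergence integral} applies to $X_t=X_0+\int_0^t g_s\,ds+\int_0^t f_s\,\delta B^H(s)$. Since $f$ and $g$ are deterministic we have $D_r f_\theta=D_r g_\theta=0$, so in the expression for $\mathbb{D}^H_s X_s$ of Remark \ref{remark for genelized ito formula} every Malliavin-derivative contribution drops out and only the diagonal term survives,
\[
\mathbb{D}^H_s X_s=\int_0^s\phi(s-\theta)f_\theta\,d\theta=\hat f(s),
\]
with $\hat f$ defined by (\ref{sigma tilde_hat}) ($f$ replacing $\sigma$). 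Invoking Remark \ref{remark for sigma} with $f$ in place of $\sigma$ gives $u_s\,\mathbb{D}^H_s X_s=f_s\hat f(s)=\frac12\frac{d}{ds}\|f\|_s^2$. Substituting into (\ref{1 general Ito's formula for the divergence integral}) and merging the drift term $\int_0^t\partial_x\psi(s,X_s)g_s\,ds$ with the divergence term $\int_0^t\partial_x\psi(s,X_s)f_s\,\delta B^H(s)$ into $\int_0^t\partial_x\psi(s,X_s)\,dX_s$ yields exactly the claimed identity.

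\textbf{Step 2 (approximation).} For a general continuous $f$, I would pick $f^n\in C^1([0,T])$ (mollifications or polynomial approximants) with $f^n\to f$ uniformly on $[0,T]$, keep $g$ fixed, and set $X^n_t:=X_0+\int_0^t g_s\,ds+\int_0^t f^n_s\,\delta B^H(s)$. Step 1 supplies the formula for each $X^n$. Because $X^n_t-X_t=\int_0^t(f^n_s-f_s)\,\delta B^H(s)$ is a Wiener integral with vanishing Malliavin derivative, the isometry of Theorem \ref{Th.1_Hu 2005} and the embedding $L^2([0,T])\subset|\mathcal{H}|$ give $\mathbb{E}|X^n_t-X_t|^2=\|f^n-f\|_t^2\to0$ uniformly in $t$; likewise $\frac{d}{ds}\|f^n\|_s^2=2f^n_s\widehat{f^n}(s)\to 2f_s\hat f(s)=\frac{d}{ds}\|f\|_s^2$ uniformly. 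With $\psi$ and its derivatives bounded, dominated convergence then passes the limit through the left-hand side and through all the Lebesgue integrals.

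The main obstacle is the passage to the limit in the divergence integral $\int_0^t\partial_x\psi(s,X^n_s)f^n_s\,\delta B^H(s)$: as $\delta$ is only closed, not continuous, one cannot interchange limit and integral directly. I would resolve this exactly as in the proof of Theorem \ref{main result Hu&Peng}, by testing against an arbitrary $F\in\mathcal{P}_T$ and using the duality (\ref{equation for divergence operator}),
\[
\mathbb{E}\Big[F\int_0^t\partial_x\psi(s,X^n_s)f^n_s\,\delta B^H(s)\Big]=\mathbb{E}\big[\langle D^H_\cdot F,\ \partial_x\psi(\cdot,X^n_\cdot)f^n_\cdot\mathbf{1}_{[0,t]}\rangle_T\big],
\]
and letting $n\to\infty$ on the right; this simultaneously identifies $\partial_x\psi(\cdot,X_\cdot)f_\cdot\mathbf{1}_{[0,t]}$ as an element of $Dom(\delta)$ and shows that its divergence integral is the $L^1$-limit of the approximating integrals, which by Step 2 equals the remaining terms of the formula. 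Removing the localization on $\psi$ then completes the argument.
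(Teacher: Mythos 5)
Your Step~1 is precisely the paper's proof of Corollary~\ref{particular for general ito formula}: apply Theorem~\ref{general Ito formula for the divergence integral}, note via Remark~\ref{remark for genelized ito formula} that for deterministic integrands all Malliavin-derivative contributions to $\mathbb{D}^{H}_{s}X_{s}$ vanish so that $u_{s}\mathbb{D}^{H}_{s}X_{s}=f_{s}\int_{0}^{s}\phi(s-\theta)f_{\theta}\,d\theta=\frac{1}{2}\frac{d}{ds}\Vert f\Vert_{s}^{2}$, and absorb the drift and divergence terms into $\int_{0}^{t}\partial_{x}\psi(s,X_{s})\,dX_{s}$. Where you genuinely depart from the paper is Step~2. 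The paper simply asserts that deterministic continuous $f,g$ ``satisfy the condition'' of Theorem~\ref{general Ito formula for the divergence integral}, but that theorem requires $u\in\mathcal{V}_{T}$, whose elements are $C^{1}$ in time, whereas the corollary assumes $f$ only continuous; you are right that this is a gap, and your mollification argument closes it. The approximation itself is sound: the isometry of Theorem~\ref{Th.1_Hu 2005} for deterministic integrands gives $\mathbb{E}|X^{n}_{t}-X_{t}|^{2}=\Vert f^{n}-f\Vert_{t}^{2}\rightarrow0$ uniformly in $t$, the Lebesgue terms pass to the limit by dominated convergence once $\psi$ is localized, and the divergence term is handled by duality against $F\in\mathcal{P}_{T}$ exactly as in the proof of Theorem~\ref{main result Hu&Peng}. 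One small sharpening: since the test functionals $F\in\mathcal{P}_{T}$ are polynomial (hence unbounded), you should record that the right-hand sides $\theta^{n}(t)$ converge in $L^{2}(\Omega)$, not merely $L^{1}$ — which they do, because after localization $\psi$ is Lipschitz in $x$ and every term is dominated by quantities converging in $L^{2}$ — so that $\mathbb{E}[F\theta^{n}(t)]\rightarrow\mathbb{E}[F\theta(t)]$ by Cauchy--Schwarz. With that noted, your proof is correct and is in fact more complete than the one in the paper.
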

\begin{proof}
Since $f,g$ are deterministic function, they satisfy the condition of
Theorem \ref{general Ito formula for the divergence integral}. Then from (\ref{1 general Ito's formula for the divergence integral}) and Remark \ref{remark for genelized ito formula} we have
\[
\begin{array}
[c]{l}
\psi(t,X_{t})=\psi(0,X_{0})+{\displaystyle\int_{0}^{t}}{\displaystyle\dfrac{\partial }{\partial s}}\psi(s,X_{s})ds
+{\displaystyle\int_{0}^{t}}{\displaystyle\dfrac{\partial }{\partial x}}\psi(s,X_{s})dX_{s}\medskip\\
\qquad\qquad+{\displaystyle\int_{0}^{t}}{\displaystyle\dfrac{\partial^{2}}{\partial x^{2}}}\psi(s,X_{s})f_{s}\left({\displaystyle\int_{0}^{s}}
f_{\theta}\phi(s-\theta)d\theta \right)ds.
\end{array}
\]
On the other hand
\[
\begin{array}
[c]{l}
\dfrac{d}{ds} \Vert f\Vert_{s}^{2}=\dfrac{d}{ds}{\displaystyle\int_{0}^{s}}{\displaystyle\int_{0}^{s}}\phi(u-v)f_{u}f_{v}dudv
=2f_{s}{\displaystyle\int_{0}^{s}}\phi(u-s)f_{u}du,
\end{array}
\]
which completes our proof. \hfill
\end{proof}
\begin{lemma}
\label{property miu bar}If $Y\in\mathcal{\bar{V}}_{T}^{\alpha}$ and $\psi$ is
a Lipschitz function, then $\psi(Y)\in\mathcal{\bar{V}}_{T}
^{\alpha}$.
\end{lemma}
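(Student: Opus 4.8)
The plan is to exploit that post-composition with a Lipschitz function is itself Lipschitz for the $\alpha$-norm, and then to reduce the problem to approximating $\psi$ by smooth functions. Throughout, $\psi(Y)$ denotes the process $t\mapsto\psi(Y(t))$, and I write $K$ for the Lipschitz constant of $\psi$. First I would record the elementary estimate
\[
\Vert\psi(Y)-\psi(Y')\Vert_{\alpha}^{2}
=\int_{0}^{T}t^{2\alpha-1}\mathbb{E}\,|\psi(Y(t))-\psi(Y'(t))|^{2}dt
\leq K^{2}\Vert Y-Y'\Vert_{\alpha}^{2},
\]
valid for any two processes. Hence $Y\mapsto\psi(Y)$ is $K$-Lipschitz on $\mathcal{\bar{V}}_{T}^{\alpha}$ with the norm $\Vert\cdot\Vert_{\alpha}$, and in particular continuous. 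Since $\mathcal{\bar{V}}_{T}^{\alpha}$ is, by construction, the closure of $\mathcal{V}_{T}$ in the complete weighted space $L^{2}([0,T]\times\Omega,\,t^{2\alpha-1}dt\otimes dP)$ and is therefore itself closed, it suffices to prove $\psi(Y)\in\mathcal{\bar{V}}_{T}^{\alpha}$ for every $Y$ in the dense subset $\mathcal{V}_{T}$: for general $Y\in\mathcal{\bar{V}}_{T}^{\alpha}$ one picks $Y_{n}\in\mathcal{V}_{T}$ with $Y_{n}\to Y$ in $\Vert\cdot\Vert_{\alpha}$, whence $\psi(Y_{n})\to\psi(Y)$ by the estimate above and $\psi(Y)$ lands in the closure.

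So fix $Y\in\mathcal{V}_{T}$, say $Y=w(\cdot,\eta(\cdot))$ with $w\in C^{1,3}_{pol}([0,T]\times\mathbb{R})$ and $\partial_{t}w\in C^{0,1}_{pol}([0,T]\times\mathbb{R})$. The second step is to mollify $\psi$ exactly as in the proof of Theorem \ref{Theorem 1 for fBSVI}: set
\[
\psi^{\theta}(x)=\int_{\mathbb{R}}\psi(x-\theta u)\lambda(u)\,du,\qquad
\lambda(u)=\tfrac{1}{\sqrt{2\pi}}e^{-u^{2}/2},\ \theta>0.
\]
Then $\psi^{\theta}\in C^{\infty}(\mathbb{R})$, it is again $K$-Lipschitz, and $|\psi^{\theta}(x)-\psi(x)|\leq K\theta\int_{\mathbb{R}}|u|\lambda(u)\,du=\sqrt{2/\pi}\,K\theta$ for all $x$. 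For each fixed $\theta$ all derivatives of $\psi^{\theta}$ are bounded, so the chain rule shows that $w^{\theta}:=\psi^{\theta}\circ w$ again belongs to $C^{1,3}_{pol}$ with $\partial_{t}w^{\theta}\in C^{0,1}_{pol}$: the boundedness of $(\psi^{\theta})',(\psi^{\theta})'',(\psi^{\theta})'''$ controls the new factors appearing in the finitely many chain-rule terms, while the polynomial growth is inherited from $w$ and its derivatives. Consequently $\psi^{\theta}(Y)=w^{\theta}(\cdot,\eta(\cdot))\in\mathcal{V}_{T}$.

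Finally I would let $\theta\to0$. Using the uniform bound on $\psi^{\theta}-\psi$ together with $\int_{0}^{T}t^{2\alpha-1}dt=T^{2\alpha}/(2\alpha)<\infty$ (recall $\alpha\geq1/2$),
\[
\Vert\psi^{\theta}(Y)-\psi(Y)\Vert_{\alpha}^{2}
\leq\frac{2}{\pi}K^{2}\theta^{2}\,\frac{T^{2\alpha}}{2\alpha}\xrightarrow[\theta\to0]{}0,
\]
so $\psi(Y)$ is an $\alpha$-norm limit of the elements $\psi^{\theta}(Y)\in\mathcal{V}_{T}$ and hence belongs to $\mathcal{\bar{V}}_{T}^{\alpha}$; combined with the reduction of the first paragraph this proves the lemma. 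The one genuinely technical point, and the main obstacle, is the verification in the second step that $\psi^{\theta}\circ w$ still satisfies the defining regularity of $\mathcal{V}_{T}$ (smoothness up to third order in $x$ and the $C^{0,1}_{pol}$ condition on the time derivative). The mollification is precisely what repairs the fact that a merely Lipschitz $\psi$ would in general destroy the $C^{1,3}$ structure, and one must check that each chain-rule term retains polynomial growth uniformly in the spatial variable.
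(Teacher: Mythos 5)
Your proof is correct and follows essentially the same route as the paper: approximate $Y$ by elements of $\mathcal{V}_{T}$, mollify $\psi$ with the Gaussian kernel to get a uniform error $\sqrt{2/\pi}\,K\theta$, and check that the mollified composition stays in $\mathcal{V}_{T}$. The only (cosmetic) difference is that you perform the density reduction and the mollification as two separate limits, while the paper takes $n\rightarrow\infty$ and $\delta\rightarrow0$ in a single combined estimate.
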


\begin{proof}
We remark first that, since $Y\in\mathcal{\bar{V}}_{T}^{\alpha}$, there exists
a sequence $\left\{Y_{n}\right\}_{n=1}^{\infty}\subset\mathcal{V}_{T}$ such that
$\lim\limits_{n\rightarrow\infty}{\displaystyle\int_{0}^{T}}t^{2\alpha
-1}\mathbb{E}\left\vert Y_{n}\left(  t\right)  -Y\left(  t\right)  \right\vert^{2}dt=0.$
Let us set
\[
\psi^{\delta}(x)={\int_{\mathbb{R}}}\psi(x-\delta u)\rho(u)du,~x\in\mathbb{R},
\]
where $\delta>0$ and $\rho(u)=\dfrac{1}{\sqrt{2\pi}}e^{-\frac{u^{2}}{2}},~u\in\mathbb{R}$. We
know that
\[
|\psi^{\delta}(x)-\psi(x)|~\leq{\int_{\mathbb{R}}}\big|\psi(x-\delta u)-\psi(x)\big|\rho(u)du\leq{\int_{\mathbb{R}}}K\delta|u|\rho(u)du
\leq K\delta\sqrt{2/\pi},
\]
where $K$ is a Lipschitz constant of $\psi$. Now, we have
\[
\begin{array}
[c]{l}
{\displaystyle\int_{0}^{T}}t^{2\alpha-1}\mathbb{E}|\psi^{\delta}(Y_{n}(t))-\psi(Y(t))|^{2}dt\medskip\\
\quad\leq2{\displaystyle\int_{0}^{T}}t^{2\alpha-1}\mathbb{E}\big|\psi^{\delta}(Y_{n}(t))-\psi(Y_{n}(t))\big|^{2}dt
+2{\displaystyle\int_{0}^{T}}t^{2\alpha-1}\mathbb{E}\big|\psi(Y_{n}(t))-\psi(Y(t))\big|^{2}dt\medskip\\
\quad\leq\dfrac{4}{\pi}{\displaystyle\int_{0}^{T}}K^{2}\delta^{2}t^{2\alpha-1}dt
+2K^{2}{\displaystyle\int_{0}^{T}}t^{2\alpha-1}\mathbb{E}|Y_{n}(t)-Y(t)|^{2}dt.
\end{array}
\]
Consequently,
\[
{\int_{0}^{T}}t^{2\alpha-1}\mathbb{E}|\psi^{\delta}(Y_{n}(t))-\psi(Y(t))|^{2}dt\rightarrow0, \text{ for } n\rightarrow \infty,\;\delta\rightarrow0,
\]
and the proof is completed by showing that $\psi^{\delta}(Y_{n})\in\mathcal{V}_{T}$, for all $n$ and $\delta>0$.

First, it is obvious that $\psi^{\delta}\in C^{\infty}$ and%
\[
|\psi^{\delta}(x)-\psi^{\delta}(y)|\leq{\displaystyle\int_{\mathbb{R}}}|\psi(x-\delta u)-\psi(y-\delta u)|\rho(u)du\leq K|x-y|,
\]
which implies that $\left|\dfrac{d}{dx}\psi^{\delta}(x)\right|\leq K$. Moreover, a straightforward computation shows that the derivatives $\dfrac{d^{2}}{dx^{2}}\psi^{\delta}(x)$  and $\dfrac{d^{3}}{dx^{3}}\psi^{\delta}(x)$ have polynomial growth.
Recalling that $Y_{n}$ belongs to $\mathcal{V}_{T}~$, we complete the proof.\hfill
\end{proof}
\medskip

\end{document}